    \newtheorem{Lem}{Lemma}[section]
    \newtheorem{Prop}[Lem]{Proposition}
    \newtheorem{Thm}[Lem]{Theorem}
    \newtheorem{Cor}[Lem]{Corollary}
\theoremstyle{definition}
    \newtheorem{Def}[Lem]{Definition}
    \newtheorem{Exa}[Lem]{Example}
    \newtheorem{Rem}[Lem]{Remark}
\newcommand{\Pz}{\mathbb{P}^4_{\mathbb{Z}}}
\newcommand{\PR}{\mathbb{P}^4_{R}}
\newcommand{\Pqz}{\mathbb{P}^{14}_{\mathbb{Z}}}
\newcommand{\PqR}{\mathbb{P}^{14}_{R}}
\newcommand{\Pdzd}{{\mathbb{P}^{2}_{\mathbb{Z}}}^\vee}
\newcommand{\PdRd}{{\mathbb{P}^{2}_{R}}^\vee}
\newcommand{\Pqd}{{\mathbb{P}^{14}_{k}}^\vee}
\newcommand{\PdR}{\mathbb{P}^2_{R}}
\newcommand{\Pdz}{\mathbb{P}^2_{\mathbb{Z}}}
\newcommand{\PuR}{\mathbb{P}^1_{R}}
\newcommand{\Spec}{\textrm{Spec}\,}
\newcommand{\spz}{\Spec (\mathbb{Z})}
\newcommand{\spR}{\Spec (R)}
\newcommand{\F}{\mathscr{F}}
\newcommand{\sO}{\mathscr{O}}
\newcommand{\fl}{{\rm Fl}}
\newcommand{\bp}{\mathbb{P}^2_k}
\newcommand{\bpd}{{\mathbb{P}^2_k}^\vee}
\newcommand{\Pq}{\mathbb{P}^{14}_k}
\newcommand{\Hilbz}{\text{Hilb}_{24} \bigl( \Pdzd \bigr)}
\newcommand{\Hilb}{\text{Hilb}_{24} \bigl( \bpd \bigr)}
\newcommand{\Sym}{\text{Sym}^{24} \bigl( \bpd \bigr)}
\newcommand{\wtF}{\widetilde\F}
\newcommand{\wtP}{\widetilde{\mathbb{P}}_k^{14}}
\newcommand{\wtD}{\widetilde{\mathscr{D}}}
\newcommand{\usm}{U^{\rm sm}_{10}}
\newcommand{\mult}{{\rm mult}}
\newcommand{\nom}{derived}
\newcommand{\bilq}[2]{\left\langle #1 , #2 \right\rangle_2}
\newcommand{\bitq}[2]{\left\langle #1 , #2 \right\rangle_3}
\newcommand{\Fr}{{\rm Fr}}
\newcommand{\qev}{q_\varepsilon}
\newcommand{\qep}{\qev (x,y,z)}
\newcommand{\car}{{\rm char}}
\newcommand{\HH}{{\rm H}^0}
\newcommand{\Vinf}{\mathscr{V}^{\text{inf}}_\ell}
\newcommand{\Vinfl}[1]{\mathscr{V}^{\text{inf}}_{\ell_#1}}
\newcommand{\VH}{\mathscr{V}^{H}_\ell}
\newcommand{\VHl}[1]{\mathscr{V}^{H}_{\ell_#1}}
\newcommand{\pd}[2]{\left[ #1 , #2 \right] }
\newcommand{\GL}{\mathfrak{gl}_3}
\newcommand{\SL}{\mathfrak{sl}_3}
\address{Marco Pacini, Instituto de Matem\'atica, Universidade Federal Fluminense, Rio de Janeiro, Brazil}
\email{pacini.uff@gmail.com, pacini@impa.br}
\address{Damiano Testa, Mathematics Institute, University of Warwick, Coventry, CV4 7AL, United Kingdom}
\email{adomani@gmail.com}
\begin{document}

\title{Reconstructing general plane quartics \\
from their inflection lines}

\author{Marco Pacini}
\author{Damiano Testa}

\begin{abstract}
Let $C$ be a general plane quartic and let $\fl(C)$ denote the configuration of inflection lines of $C$.  We show that if $D$ is any plane quartic with the same configuration of inflection lines $\fl(C)$, then the quartics $C$ and $D$ coincide.
\end{abstract}

\thanks{The first author was partially supported by CNPq, processo 200377/2015-9 and processo 301314/2016-0}
\thanks{The second author was partially supported by EPSRC grant number EP/F060661/1 and number EP/K019279/1.}

\subjclass[2010]{
14Q05 %Curves
14H45 %Special curves and curves of low genus
14H50 %Plane and space curves
14N20 %Configurations and arrangements of linear subspaces
}
\maketitle

\tableofcontents

\section*{Introduction}

Let $k$ be an algebraically closed field and let $C \subset \bp$ be a smooth quartic curve over $k$.  An {\emph{inflection line}} for the quartic $C$ is a line $\ell$ in $\bp$ such that the intersection $\ell \cap C$ contains of a point $p$ of multiplicity at least~$3$.  If $k$ is the field of complex numbers, then the Pl\"ucker formulas imply that $C$ admits exactly $24$ inflection lines, counted with multiplicity.

The present paper is motivated by the following question.
\begin{equation} \label{eq:que}
\tag{$\mathscr{R}$}
{\emph{Does the set of inflection lines of a general plane quartic curve uniquely determine the curve?}}
\end{equation}
The answer to Question~\eqref{eq:que} may depend on the characteristic of the ground field $k$.  Our main result is an affirmative answer to Question~\eqref{eq:que}, when the characteristic of the field $k$ is coprime with~$6$.  We work as much as we can with fields of arbitrary characteristic.

\subsubsection*{Previous work}
Question~\eqref{eq:que} is inspired by a similar question on bitangent lines to plane quartics, introduced and addressed in a paper by Caporaso-Sernesi~\cite{CS1}.  A subsequent paper of Lehavi~\cite{L} extended the result to all smooth quartic curves and again work of Caporaso-Sernesi~\cite{CS2} generalized the question and resolved it for general canonical curves.

For the specific question on inflection lines, we had already obtained weaker results.  In~\cite{PT1}, we solved the analogous question for plane cubics; we were also able to reconstruct a general plane quartic from the knowledge of its inflection lines and a {\emph{simple}} inflection point.  In~\cite{PT2}, we examined some families of plane quartics found by Vermeulen~\cite{V}.  From these families, we defined {\emph{Vermeulen's list}}, a list of quartics that, over fields of characteristic $0$, consists of all the smooth plane quartics with at least $8$ hyperinflection lines.  We were able to show that different quartics in Vermeulen's list have different configurations of inflection lines (with only two exceptions).  Since we only analyzed configurations of inflection lines of smooth plane quartics, our results did not answer Question~\eqref{eq:que}.

\subsubsection*{Methods}
We set up our problem using classical invariant theory of binary and ternary quartic forms.  We thus construct equations for the configuration of inflection lines associated to general plane quartics.  This produces naturally a rational map from plane quartics to the Hilbert scheme of~$24$ points in $\bpd$.  Resolving this map, we associate configurations of inflection lines also to singular curves (Definition~\ref{def:conf}).  Our proof then is by a degeneration argument.  We fix a special smooth quartic $C$ and we analyze the quartics $D$ admitting the same configuration of inflection lines as $C$.  We follow roughly three main steps.
\begin{enumerate}
\item
\label{it:lis}
The quartic $D$ cannot be singular.
\item
\label{it:rilis}
If $D$ is smooth, then $D$ coincides with $C$.
\item
\label{it:defo}
If the special quartic $C$ can be reconstructed, then a general quartic can be reconstructed.
\end{enumerate}

In Step~\eqref{it:lis}, we look for properties of configurations of inflection lines associated to plane quartics that will distinguish between smooth and singular quartics.

We study the local structure of the classical contravariants of plane quartics.  If $D$ is a singular quartic not containing a triple line, then the numerical properties of the configurations of inflections lines are already enough for our purposes.

To analyze quartics containing a triple line, we follow a more global approach.  We combine deeper facts about the invariant theory of ternary quartic forms with Lie algebra techniques to generate restrictions satisfied by configurations of inflection lines associated to such quartics.  The resulting configurations are different from the configurations of {\emph{general}} quartics: the precise control that we obtain using this global approach provides us with an explicit condition implying this genericity assumption.  In summary, let $\fl \subset \bpd$ be a configuration of inflection lines associated to a singular plane quartic.  Thus, $\fl$ corresponds to a point in $\Hilb$.  We show that $\fl$ satisfies one of the following properties:
\begin{itemize}
\item
one point of $\fl$ has multiplicity at least $3$;
\item
the reduced subscheme of $\fl$ is contained in a singular quartic.
\end{itemize}
Neither of these properties is satisfied by the configuration of inflection lines of a general quartic.  This concludes Step~\eqref{it:lis}.

So far, the argument carries through with minor differences, whether we encode a configuration as a point in the Hilbert scheme or in the symmetric product.  The reason why we choose to work with $\Hilb$ rather than with $\Sym$, shows up in the remaining two steps.  We exploit the added information carried by the non-reduced ideals in the Hilbert scheme.  Indeed, the configuration of inflection lines of the special quartic $C$ that we use for the reconstruction is non-reduced.  This has two consequences.

On the one hand, in Step~\eqref{it:rilis}, the reconstruction process becomes easier: the problem of going from the inflection lines to the curve $C$ becomes almost linear in the non-reduced setting.  On the other hand, in Step~\eqref{it:defo}, the more non-reduced the configuration becomes, the more singular the deformation spaces become.

We need to strike a balance between allowing enough non-reducedness to successfully perform Step~\eqref{it:rilis}, while at the same time maintaining enough smoothness to deform away from the non-reduced locus and complete Step~\eqref{it:defo}.

We study carefully the deformation space in the presence of non-reduced points.  An inflection line corresponding to a reduced point in the configuration represents a smooth point of the deformation space and imposes {\emph{two}} linear conditions on the tangent space.  An inflection line corresponding to a non-reduced point, represents a singular point of the deformation space, but we are still able to identify {\emph{one}} linear condition on the tangent space.  This apparently small gain turns out to be essential for our argument.  Ultimately, the source of this improvement originates from the smoothness of the Hilbert scheme.  Amusingly, over fields of characteristic~$13$, we find two plane quartics with the following properties.  The quartics are smooth, distinct and projectively equivalent.  Their configurations of inflection lines are different points in $\Hilb$, having the same image in $\Sym$.  Our tangent space computations show that either one of these two examples can be used in the reconstruction argument.

\subsubsection*{Outline}
In Section~\ref{sec:invcl}, we define the invariants and contravariants that we will use.  This allows us to give equations for the configurations of inflection lines.  We define {\emph{totally harmonic quartics}} by the vanishing of the quartic contravariant (Definition~\ref{def:tothar}).  Lie algebras also appear in this section.  The interplay between invariant theory and Lie algebras explains our reasoning in a completely satisfactory way, if it were not for the mysterious identity of Lemma~\ref{lem:trimi}: we do not know how to justify it conceptually.

In Section~\ref{sec:alli}, we introduce and analyze the natural map $\F$ assigning to a general plane quartic its configuration of inflection lines in the Hilbert scheme of points in $\bpd$.  We classify totally harmonic quartics in all characteristics in Proposition~\ref{prop:nonsva}.  As a byproduct, we deduce that the only smooth plane quartics not having $24$ inflection lines, counted with multiplicity, are projectively equivalent to the Fermat quartic curve over algebraically closed fields of characteristic $3$ (Corollary~\ref{cor:fermat}).

In Section~\ref{sec:trop}, we argue that the configurations of inflection lines associated to singular plane quartics (see Definition~\ref{def:conf}) are disjoint from the configurations of inflection lines of general quartics (Propositions~\ref{prop:liscsing} and~\ref{prop:re3}).

In Section~\ref{sec:rifi}, we identify a smooth plane quartic curve $V$ that we can explicitly reconstruct from its configuration of inflection lines (Corollary~\ref{cor:rico}).  Combining this fact with the conclusions of Section~\ref{sec:trop}, we deduce that, for any resolution of $\F$, the fiber above the quartic $V$ consists of $V$ alone.  All that is left to show is that $V$ is reduced as a fiber of $\F$: this is Lemma~\ref{lem:et}.  In fact, we can reconstruct explicitly a positive dimensional family of non-isomorphic plane quartics with our argument.  Remark~\ref{rem:hilsym} highlights the features that the Hilbert scheme has and that the symmetric product lacks.  Finally, Theorem~\ref{thm:rf} answer Question~\eqref{eq:que}.

\subsection*{Acknowledgements}
The authors would like to thank Igor Dolgachev for useful conversations on the classical invariant theory of binary and ternary quartic forms.  We would like to thank the anonymous referee for a careful reading of the paper and constructive criticism: we feel that our paper greatly improved thanks to the comments of the referee.  Also, even though the results presented in this paper require no computer calculation, the authors spent several hours doing computations with Magma~\cite{magma}: we benefited immensely from using it.  We are happy to send by email the Magma code that we used.

\section{Classical invariant theory of quartic forms} \label{sec:invcl}

In this section, we identify the locus of binary quartic forms with a triple root.  Classically, this ties in with the invariant theory of binary quartic forms and is our starting point.  See~\cite{sal}, especially Chapter~VI, for more details and complements.  We then link invariant theory with the basic theory of Lie algebras to obtain several identities that we use in the remainder of the paper.

Let $R$ be a commutative ring with identity and let $m,n$ be non-negative integers.  We denote by $R[x_1,\ldots,x_n]$ the polynomial ring over $R$ in $n$ variables $x_1 , \ldots , x_n$ and by $R[x_1,\ldots,x_n]_m$ the submodule of forms of degree $m$.  Let $\mathbb{P}^n_R$ denote the projective space of dimension $n$ over the spectrum $\spR$ of $R$.  For most of our applications, the ring $R$ will be either the ring of integers $\mathbb{Z}$ or an algebraically closed field $k$ of arbitrary characteristic.

Let $f_0 , \ldots , f_4$ be homogeneous coordinates on $\PR$.  We identify $\PR$ with the projectivization of the space of binary quartic forms associating to the point $[f_0 , \ldots , f_4]$ in $\PR$ the binary quartic form
\begin{eqnarray*}
f = f_0 y^4 + f_1 y^3 z + f_2 y^2 z^2 + f_3 y z^3 + f_4 z^4 \in R[y,z]_4.
\end{eqnarray*}

The group-scheme $GL_{2,R}$ over $R$ acts on binary forms in $\PR$ by the rule
\[
\begin{pmatrix}
a & b \cr c & d
\end{pmatrix}
f(y,z) = f(ay+bz,cy+dz).
\]
There are two basic invariants under the action of $SL_{2,R} \subset GL_{2,R}$ on binary quartic forms:
\begin{equation} \label{e:inva}
\begin{array}{rcl}
S(f) & = & 12 f_0 f_4 -3 f_1 f_3 + f_2^2 , \\[5pt]
T(f) & = & 72 f_0 f_2 f_4 - 27 f_0 f_3^2 - 27 f_1^2 f_4 + 9 f_1 f_2 f_3 - 2 f_2^3 .
\end{array}
\end{equation}
These invariants are denoted by $I,J$ in~\cite{N}*{p.~96}.  Denote by $V_3 \subset \PR$ the scheme defined by the vanishing of $S$ and $T$ and by $U_3 \subset \spR$ the open subset $U_3 = \Spec (R[\frac{1}{3}])$ of $\spR$.  Let $V_3' \subset \PR$ denote the image of the morphism
\[
\begin{array}{rcl}
\PuR \times \PuR & \longrightarrow & \PR \\[5pt]
([\alpha , \beta] , [\gamma , \delta ]) & \longmapsto & (\alpha y - \beta z)^3  (\gamma y - \delta z)
\end{array}
\]
with the reduced induced subscheme structure.  We refer informally to the scheme $V_3'$ as the locus of binary quartic forms with a triple root.

\begin{Lem}\label{lem:z3}
The morphism $V_3 \to \spR$ is flat over the open set $U_3 \subset \spR$.  Moreover, the schemes $V_3$ and $V_3'$ coincide over the same open set $U_3$.
\end{Lem}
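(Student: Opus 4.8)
The plan is to reduce the statement to an explicit description of the geometric fibres of $V_3$ over $U_3$, where the classical fact that a binary quartic lies in $V(S,T)$ precisely when it has a root of multiplicity at least $3$ becomes available, and then to propagate this fibrewise information to the total space by flatness.

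I would first dispose of the inclusion $V_3'\subseteq V_3$, which holds over any $R$. Expanding $(\alpha y-\beta z)^3(\gamma y-\delta z)$ gives coefficients $f_0=\alpha^3\gamma,\ \ldots,\ f_4=\beta^3\delta$, and substituting them into $S$ and $T$ one checks by a direct computation that both vanish identically as polynomials in $\alpha,\beta,\gamma,\delta$ over $\mathbb{Z}$. Hence the parametrizing morphism factors through $V_3$; since $V_3'$ carries the reduced structure, this yields $V_3'\subseteq V_3$ as closed subschemes.

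The heart of the argument — and the step I expect to be the main obstacle — is the reverse set-theoretic inclusion on geometric fibres. Fix an algebraically closed field $k$ with $\car k\neq 3$ and a nonzero binary quartic $f$ over $k$. Because $S$ and $T$ are relative invariants for the $GL_2$-action, their simultaneous vanishing is invariant under the $GL_2(k)$-action and under scaling; I would therefore stratify the nonzero quartics by the multiplicity partition of their roots and, using $3$-transitivity of $GL_2$ on $\mathbb{P}^1$, put $f$ into a normal form in each stratum, the stratum of four distinct roots retaining a single modulus $\lambda$. For the partitions $(2,2)$ and $(2,1,1)$ the representatives $y^2z^2$ and $y^2z(y-z)$ already give $S=1\neq 0$, so $T$ is not even needed. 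For four distinct roots, the normal form $yz(y-z)(y-\lambda z)$ with $\lambda\neq 0,1$ gives $S=\lambda^2-\lambda+1$ and $T=(\lambda+1)(2\lambda-1)(\lambda-2)$; reducing $T$ modulo $S$ (that is, using $\lambda^2\equiv\lambda-1$) gives $T\equiv 3(1-2\lambda)$, so a common zero would force $3(1-2\lambda)=0$, which is incompatible with $S(\lambda)=0$ as soon as $\car k\neq 3$. Thus $S=T=0$ survives only for the partitions $(3,1)$ and $(4)$, i.e. for forms with a root of multiplicity at least $3$, which is exactly the image of the parametrization. The hypothesis $\car k\neq 3$ is essential here: modulo $3$ one has $S=f_2^2$ and $T=-2f_2^3$, so $V_3=V(f_2)$ is a hyperplane and the fibre jumps to dimension $3$.

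With the fibres identified, flatness and scheme equality follow formally. Each geometric fibre of $V_3=V(S,T)$ is the image of the parametrization, an irreducible surface; in particular it has codimension exactly $2$ in $\PR$, so $S,T$ restrict to a regular sequence on every fibre, and the fibrewise criterion for flatness (cutting a flat family by relative nonzerodivisors) shows that $V_3\to U_3$ is flat. Moreover each fibre is a complete intersection, hence Cohen--Macaulay with no embedded or lower-dimensional components, and it is generically reduced: at $f=y^3z$ the Jacobian of $(S,T)$ has the nonzero $2\times 2$ minor $(-3)\cdot(-27)=81$ in the $f_3,f_4$ columns (nonzero since $\car k\neq 3$), so $V_3$ is smooth of dimension $2$ there and therefore reduced at the generic point of its unique component. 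Hence the geometric fibres are reduced. Over the reduced base $\Spec\mathbb{Z}[\tfrac13]$, flatness together with geometrically reduced fibres forces $V_3$ itself to be reduced, and since $V_3'\subseteq V_3$ is a reduced closed subscheme with the same support, the two coincide. The case of general $R$ follows by base change along $\Spec R[\tfrac13]\to\Spec\mathbb{Z}[\tfrac13]$; the only delicate point is that formation of the reduced image $V_3'$ need not commute with base change, which one circumvents using the flatness of $V_3$ just established.
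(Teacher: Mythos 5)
Your proof is correct, and it reaches the conclusion by a genuinely different route in the key step. The overall scaffolding is the same as the paper's: reduce to $\mathbb{Z}$, get flatness from the fact that $(S,T)$ is a regular sequence on each geometric fibre, check $V_3'\subseteq V_3$ by substitution, and finish by combining Cohen--Macaulayness with generic reducedness (both proofs even use the same smooth point $[0,1,0,0,0]$, i.e.\ the form $y^3z$, with the same Jacobian minor $81$). Where you diverge is in showing that $V_3$ has no components beyond $V_3'$: you determine the full support of each geometric fibre by stratifying binary quartics by the multiplicity partition of their roots and eliminating on the normal form $yz(y-z)(y-\lambda z)$, where the computation $T\equiv 3(1-2\lambda)\pmod{S}$ makes the role of the hypothesis $\car k\neq 3$ completely transparent. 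The paper instead avoids any classification of the fibre: it slices $V_3$ with the plane $f_0=f_4=0$, which must meet every component since $V_3$ has pure codimension $2$, and checks that away from $(3)$ this intersection consists only of the two points $y^3z$ and $yz^3$, both smooth on $V_3$ and both in $V_3'$. The paper's trick is shorter but depends on a well-chosen plane; your argument is longer but yields as a by-product the precise set-theoretic statement that $S=T=0$ characterises binary quartics with a root of multiplicity at least $3$, which is exactly what Remark~\ref{rem:ptitripli} later extracts from the lemma. Two small remarks: in your flatness step, the assertion that $T$ is a nonzerodivisor on $V(S)_p$ needs $V(S)_p$ to have no embedded primes, which holds because it is a hypersurface (the paper instead checks irreducibility of $V(S)_p$ via the rank of the quadric and exhibits $y^4-yz^3$ with $T=-27$); and your closing caveat about base change is warranted but no worse than the paper's own treatment, since for non-reduced $R$ the scheme-theoretic equality with the reduced image $V_3'$ cannot literally hold and the statement is only ever applied with $R=\mathbb{Z}$ or $R$ a field.
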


\begin{proof}
It suffices to show the lemma in the case in which $R=\mathbb{Z}$, since flatness is stable under base-change.

To prove flatness, we will show that the scheme $V_3$ is the complete intersection of $S$ and $T$ of codimension~$2$ above all primes of $U_3$.  This suffices by~\cite{sp}*{\href{http://stacks.math.columbia.edu/tag/00R4}{Tag 00R4}}, since $\spz$ is regular and $V_3$ is Cohen-Macaulay as a consequence of being a complete intersection on $U_3$.

The quadric $V(S)$ defined by the vanishing of $S$ has rank $3$ at the prime $(2)$, rank $1$ at the prime $(3)$ and rank $5$ at all remaining primes.  Therefore $V(S)$ is irreducible at every prime (and it is non-reduced at $(3)$).  Thus, to prove flatness at a prime $(p)$, it suffices to show that there are binary quartic forms contained in $V(S)$ on which $T$ does not vanish.  The form $S(y,z) = y^4 - y z^3$ is one such example at all primes $p \neq 3$, since $S(h) = 0$ and $T(h) = - 27$.  This proves the first part of the statement.

We now prove the second part of the statement.  First, it is an immediate check that the forms $S$ and $T$ vanish identically on $V_3'$, so that the locus $V_3'$ is contained in $V_3$.  Next, observe that $V_3$ has pure codimension $2$, just like $V_3'$, so that the intersection of $V_3$ with any $2$-dimensional subvariety of $\Pz$ contains at least one point from each irreducible component of $V_3$.  We are going to show that the plane $\pi$ with equations $f_0=f_4=0$ intersects $V_3$ at {\emph{smooth}} points corresponding to quartic forms in $V_3'$.  It will then follow that every component of $V_3$ is also a component of $V_3'$ and we will be done.

The intersection of $V_3$ and $\pi$ consists of the forms with coefficients satisfying the system
\[
\begin{array}{rcl@{\hspace{20pt}}rcl}
f_0 & = & 0; & -3 f_1 f_3 + f_2^2 & = & 0; \\[5pt]
 f_4 & = & 0; & f_2 (9 f_1 f_3 - 2 f_2^2) & = & 0.
\end{array}
\]
Thus, away from the prime $(3)$, the intersection $V_3 \cap \pi$ consists of the closed points corresponding to the forms $y^3z$ and $yz^3$ that are clearly in $V_3'$.  The Zariski tangent space to $V_3$ at the point $[0,1,0,0,0]$ has equations $-3f_3 = -27 f_4 = 0$, so that $[0,1,0,0,0]$ is smooth on $V_3$ at every prime of $U_3$.  Exchanging the roles of $y$ and $z$, the same is true for the point $[0,0,0,1,0]$.
\end{proof}

\begin{Rem}
If $R$ is the field $\mathbb{Z}/3\mathbb{Z}$, the invariants $S$ and $T$ become $f_2^2$ and $f_2^3$, so that they no longer define the locus of binary quartic forms with a triple root.  It is an easy computation to show that the flat limit of $V_3 \to \Spec(\mathbb{Z})$ at $(3)$ is the scheme defined by the ideal $(f_2 , f_0 f_4 - f_1 f_3)^2$.  The scheme defined by the radical ideal $(3 , f_2 , f_0 f_4 - f_1 f_3)$ is the reduced subscheme of $V_3$ consisting of polynomials over a field of characteristic~$3$ with a root of multiplicity at least~$3$: it is a smooth quadric of dimension $2$.  We observe that adding to the classical invariants the following two expressions
\begin{equation} \label{e:clch3}
\begin{array}{lccl}
S_3(f) = & \frac{1}{3} \left( T(f) + 2 f_2 S(f) \right) & = &
32 f_0 f_2 f_4 - 9 f_0 f_3^2 - 9 f_1^2 f_4 + f_1 f_2 f_3
\\[5pt]
S_4(f) = & \frac{1}{3} (S_3(f) f_2 + S(f) (f_0 f_4 - f_1 f_3)) & = & -128 f_0^2 f_4^2 + 28 f_0 f_1 f_3 f_4 - 3 f_0 f_2 f_3^2 - 3 f_1^2 f_2 f_4 + f_1^2 f_3^2
\end{array}
\end{equation}
we obtain a subscheme of $V_3$ whose structure map to $\spz$ is flat also above $(3)$.  For our main result, we exclude the case of fields of characteristic $3$.
\end{Rem}

The invariant $S$ is a quadratic form on $\PR$ and we will use the associated bilinear form in our arguments.  We denote this bilinear form by $\bilq{-}{-}$: if $f = \sum f_i y^{4-i} z^i$ and $g = \sum g_i y^{4-i} z^i$ are forms in $R[y,z]_4$, then we have
\[
\bilq{f}{g} = 12 f_0 g_4 - 3 f_1 g_3 + 2 f_2 g_2 - 3 f_3 g_1 + 12 f_4 g_0 .
\]
We will also use the identity
\[
S (f+g) = S (f) + \bilq{f}{g} + S (g).
\]

\begin{Rem} \label{rem:orbite}
Let $R=k$ be an algebraically closed field.  The vanishing set of $S$ in $\mathbb{P}^4_k$ decomposes into three $SL_2(k)$-orbits:
\begin{itemize}
    \item one orbit of forms with no repeated roots;
    \item the orbit of forms with a root of multiplicity exactly $3$;
    \item the orbit of forms with a root of multiplicity exactly $4$.
\end{itemize}
Indeed, in each $SL_2(k)$-orbit of forms having at least three distinct roots, there is a representative proportional to $yz(y-z) (y - \lambda z)$, for some $\lambda \in k$.  We then conclude easily evaluating the invariant $S$ on such a representative.  If a form has at most two distinct roots and no root of multiplicity at least $3$, then it is equivalent to $\lambda y^2z^2$, for some non-zero $\lambda \in k$, and $S( \lambda y^2z^2)= \lambda^2 $ does not vanish.  We already saw that $S$ vanishes on forms with roots of multiplicity at least $3$.
If the characteristic of $k$ is different from $3$, the form $y^4 - yz^3$ has distinct roots and $S(y^4-yz^3)$ vanishes.  If the characteristic of $k$ is $3$, then the form $y^4+z^4$ has distinct roots and $S(y^4+z^4)$ vanishes.
\end{Rem}

We now move on to homogeneous polynomials $q(x,y,z) \in R[x,y,z]_4$ of degree $4$ in three variables $x,y,z$.  Let $\PdR$ be the projective plane with homogeneous coordinates $x,y,z$.  We denote by $\PdRd$ the dual projective plane with coordinates $u,v,w$ dual to the coordinates $x,y,z$.  Let $\PqR$ denote the projective space of dimension $14$ that we think of as the space of quartic curves in $\PdR$.  We extend the definition of the invariants $S$ and $T$ to contravariant forms $H$ and $K$ on the space $\PqR$:
\[
\begin{array}{rcl}
    H(q)(u,v,w) & = & u^4 S \left( q \left( - \frac{v}{u} y - \frac{w}{u} z , y , z \right) \right) , \\[5pt]
    K(q)(u,v,w) & = & u^6 T \left( q \left( - \frac{v}{u} y - \frac{w}{u} z , y , z \right) \right) .
\end{array}
\]
The expressions above are clearly rational sections of $\mathscr{O}_{{\PdR}^\vee}(4)$ and  of $\mathscr{O}_{{\PdR}^\vee}(6)$; an easy check using the invariance of $S$ and $T$ under $SL_{2,R}$ shows that they are in fact global sections of the corresponding sheaves: they are ternary forms of degrees $4$ and $6$ respectively.

\begin{Def} \label{def:harmo}
Let $C \subset \PdR$ be the plane quartic curve given by the vanishing of a ternary quartic form $q(x,y,z) \in R[x,y,z]_4$.  We call the ternary quartic form $H(q)(u,v,w) \in R[u,v,w]_4$ the {\emph{harmonic quartic associated to $q$}}; we call the ternary sextic form $K(q)(u,v,w) \in R[u,v,w]_6$ the {\emph{harmonic sextic associated to~$q$}}.  We denote by $H(C)$ and $K(C)$ in $\PdRd$ the schemes defined by the vanishing of $H(q)$ and $K(q)$ respectively.
\end{Def}

\begin{Def}\label{def:tothar}
Let $q(x,y,z) \in R[x,y,z]_4$ be a ternary quartic; we say that $q$ is {\emph{totally harmonic}} if the harmonic quartic $H(q)$ vanishes.  Similarly, if $C \subset \PdR$ is a plane quartic curve, we say that $C$ is totally harmonic if a ternary quartic form defining $C$ is totally harmonic.
\end{Def}

For instance, if $x^3$ divides $q(x,y,z) \in R[x,y,z]$, then $q(x,y,z)$ is totally harmonic.

The coefficients of $H(q)$ and $K(q)$ are forms of degree $2$ and $3$ respectively on the space $\PqR$ of quartic curves.  We shall make repeated use of the bilinear map associated to $H$: if $q,r$ are ternary quartic forms in $R[x,y,z]_4$, we define
\begin{equation} \label{e:bil3}
\bitq{q}{r} = u^4 \bilq{ q \left( - \frac{v}{u} y - \frac{w}{u} z , y , z \right) }{ r \left( - \frac{v}{u} y - \frac{w}{u} z , y , z \right) }
\end{equation}
and again observe that the equality
\begin{equation} \label{e:bil}
H(q+r) = H(q) + \bitq{q}{r} + H(r)
\end{equation}
holds.

Let $\ell \subset \PdR$ be the line with equation
\[
\ell \colon u_0 x + v_0 y + w_0 z = 0
\]
and fix a parameterization of $\ell$.  Let $q(x,y,z)$ and $r(x,y,z)$ be ternary quartic forms in $R[x,y,z]_4$ and denote by $q|_\ell$ and $r|_\ell$ the restrictions of $q$ and $r$ to the line $\ell$ using the chosen parameterization: these restrictions are therefore binary quartic forms.  It follows at once from the definitions that there is a unit $\lambda \in R^\times$ such that the identities
\begin{equation} \label{e:eval}
\left\{
\begin{array}{lcl}
H(q)(u_0 , v_0 , w_0) & = & \lambda ^4 S(q|_\ell) \\[5pt]
\bitq{q}{r}(u_0 , v_0 , w_0) & = & \lambda ^4 \bilq{q|_\ell}{r|_\ell} \\[5pt]
K(q)(u_0 , v_0 , w_0) & = & \lambda ^6 T(q|_\ell) \\[5pt]
\end{array}
\right.
\end{equation}
hold.  In what follows, we shall be concerned mostly with the vanishing of these expressions, so that the constant $\lambda $ is harmless.

\begin{Def} \label{def:infsc}
Let $C \subset \PdR$ be a plane quartic and let $q \in R[x,y,z]_4$ be a ternary quartic form defining $C$.  The {\emph{inflection scheme of $C$}} is the subscheme of $\PdRd$ defined by the simultaneous vanishing of the forms $H(q) , K(q) \in R[u,v,w]$; we denote the inflection scheme of $C$ by $\fl(C)$.  An {\emph{inflection line of $C$}} is a line corresponding to a point in the inflection scheme of $C$.
\end{Def}

\begin{Rem} \label{rem:ptitripli}
Let $R=k$ be a field of characteristic different from $3$ and let $q$ be a ternary quartic form in $k[x,y,z]_4$.  As we saw in Equation~\eqref{e:eval}, the forms $H(q)$ and $K(q)$ both vanish on the coordinates of a line $\ell$ if and only if the invariants $S(q|_\ell)$ and $T(q|_\ell)$ also vanish.  It follows from Lemma~\ref{lem:z3} that the restriction $q|_\ell$ is a binary quartic form with a root of multiplicity at least $3$.  Thus, the inflection scheme of a plane quartic $C \subset \bp$ consists exactly of the lines having a point of intersection multiplicity at least~$3$ with $C$.  In particular, for smooth curves, the points of the inflection scheme correspond to some tangent lines to the curve.  Using the dimension of the inflection scheme, we obtain a stratification of the space of plane quartics.
\end{Rem}

\begin{Rem}
Let $R=k$ be a field of characteristic $3$.  In this case, it would make sense to extend the equations in~\eqref{e:clch3} to ternary forms and use these to define the inflection scheme.  If $C$ is a smooth plane quartic over $k$, then the scheme that we are calling the inflection scheme of $C$ contains all inflection lines of $C$, as well as lines that are not inflection lines.  Indeed, the dimension of the inflection scheme that we just defined is always at least~$1$.  Since we will not use inflection schemes over fields of characteristic~$3$, we will not pursue this here.
\end{Rem}

We list in Table~\ref{tab:contih} some computations involving the harmonic quartic $H(-)$ and the associated bilinear form $\bitq{-}{-}$; we will freely use these equations that are simple consequences of the definitions.  Recall that the identity~\eqref{e:bil} holds.  Let $q_4(y,z)$, $q_3(y,z)$, $q_2(y,z)$, $q_1(y,z) \in R[y,z]$ be binary forms of degree $4,3,2,1$ respectively in $y,z$.

\begin{table}[h]
\[
\begin{array}{|rcl|}
\hline
\bitq{x^a q_{4-a}(y,z)}{x^b q_{4-b}(y,z)} & = & 0 , \textrm{ if } a+b \ge 5 \\[4pt]
\bitq{x^4}{q_4(y,z)} & = & 12 q_4 (-w,v) \\[4pt]
\bitq{x^3 q_1(y,z)}{x q_3(y,z)} & = & -3 q_1(-w,v) q_3(-w,v) \\[4pt]
\bitq{x^3 y}{q_4(y,z)} & = & - 3 u \partial_v q_4(-w,v) \\[4pt]
H(x^2 q_2(y,z)) & = & q_2(-w,v)^2 \\[4pt]
\bitq{x^2 y^2}{q_4(y,z)} & = & u^2 \partial_v \partial_v q_4(-w,v)
\\[3pt]
\hline \end{array}
\]
\caption{Some identities involving $\bitq{-}{-}$ and $H(-)$}
    \label{tab:contih}
\end{table}

\begin{Exa} \label{exa:klein}
Let $R = \mathbb{Z}$ be the ring of integers and let $C \subset \Pdz$ denote the plane Klein quartic curve with equation $q(x,y,z) = x^3 y + y^3 z + z^3 x = 0$.  In this case, the contravariants $H$ and $K$ evaluate to
\[
\begin{array}{rcl}
    H(q)(u,v,w) & = & 3 (u^3 v + v^3 w + w^3 u ) , \\[5pt]
K(q)(u,v,w) & = & 27 (u^5 w + v^5 u + w^5 v - 5 u^2 v^2 w^2) .
\end{array}
\]
Reducing these equations modulo primes $p$ different from $3$, it follows at once that the inflection scheme of $C$ is the complete intersection of $H(C)$ and $K(C)$ of dimension~$0$ and degree~$24$.  Thus, the same is true for a general plane quartic over any field of characteristic different from $3$.
\end{Exa}

\begin{Rem}
If in the ring $R$ the identity $3=0$ holds, then the harmonic quartic $H(-)$ is always a square: this is a consequence of the fact that, in this case, the invariant $S$ is the square of $f_2$, in the notation of Equation~\eqref{e:inva}.  Explicitly, let $q(x,y,z)$ be a quartic form in $R[x,y,z]_4$ and write $q(x,y,z) = q_{1}(x,y,z) + q_{2}(x,y,z) + q_{3}(x,y,z)$, where
\begin{itemize}
    \item $q_{1}(x,y,z)$ is the sum of all the terms of $q$ corresponding to the monomials $x^2 y z$, $x y^2 z$, $x y z^2$;
    \item $q_{2}(x,y,z)$ is the sum of all the terms of $q$ corresponding to the monomials $x^2 y^2$, $x^2 z^2$, $y^2 z^2$;
    \item $q_{3}(x,y,z) = q(x,y,z)-q_{1}(x,y,z) - q_{2}(x,y,z)$.
\end{itemize}
The formula for $H(q)$ becomes
\begin{equation} \label{e:quac3}
H(q) = \left( \frac{q_{2}(vw,uw,uv) - q_{1}(vw,uw,uv)}{(uvw)^2} \right) ^2.
\end{equation}
\end{Rem}

In the remainder of the section, we prove some identities relating the coefficients of a ternary quartic form and of its associated harmonic quartic.

We define an $R$-bilinear pairing $\pd{-}{-}$ on $R[x,y,z]_4 \times R[u,v,w]_4$.  As $i,j,k$ ranges among exponents of the monomials in $R[x,y,z]_4$, the rational numbers $\frac{i!j!k!}{2}$ are all integers and we define $\pd{-}{-}$ on monomials as follows:
\[
\hspace{20pt}
\begin{array}{rcl}
\pd{-}{-} \colon
R[x,y,z]_4 \times R[u,v,w]_4
& \longrightarrow &
\quad R \\
(x^i y^j z^k , u^l v^m w^n)
& \longmapsto &
\pd{x^i y^j z^k}{u^l v^m w^n} =
\left\{
\begin{array}{cl}
\displaystyle
\frac{i!j!k!}{2}
& {\textrm{if }}(i,j,k) = (l,m,n), \\[7pt]
0 & {\textrm{if }}(i,j,k) \neq (l,m,n).
\end{array}
\right.
\end{array}
\]

Let $q(x,y,z) = \sum a_{ijk} x^i y^j z^k$ be a ternary quartic form in $R[x,y,z]_4$ and let $H(q)(u,v,w) = \sum h_{ijk} u^i v^j w^k$ be the associated harmonic quartic form in $R[u,v,w]_4$.  Recall that the coefficients $h_{ijk}$ of $H(q)$ are quadratic forms in the coefficients $a_{ijk}$ of $q$.  In our reasoning, we shall use the trilinear form $t(-,-,-)$ defined by
\begin{eqnarray*}
t \colon R[x,y,z]_4 \times R[x,y,z]_4 \times R[x,y,z]_4
& \longrightarrow &
R \\[5pt]
(q_1,q_2,q_3)
& \longmapsto &
t(q_1,q_2,q_3) = \pd{q_1}{\bitq{q_2}{q_3}}.
\end{eqnarray*}
In order to establish the identities that we use, we exploit the following lemma.

\begin{Lem} \label{lem:trimi}
The trilinear form $t$ is symmetric in its arguments, that is, for all forms $q_1,q_2,q_3 \in R[x,y,z]_4$ and all permutations $\sigma \in \mathfrak{S}_3$, the identity
\[
t(q_1,q_2,q_3) =
t(q_{\sigma(1)},q_{\sigma(2)},q_{\sigma(3)})
\]
holds.
\end{Lem}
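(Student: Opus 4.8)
The plan is to exploit the two manifest symmetries of the construction and then reduce the remaining content to a finite verification on monomials. First, observe that the binary bilinear form $\bilq{-}{-}$ is symmetric, since $\bilq{f}{g} = 12f_0g_4 - 3f_1g_3 + 2f_2g_2 - 3f_3g_1 + 12f_4g_0$ is unchanged under swapping $f$ and $g$. By the defining formula~\eqref{e:bil3}, the ternary form $\bitq{q}{r}$ inherits this symmetry in $q$ and $r$. Consequently $t(q_1,q_2,q_3) = \pd{q_1}{\bitq{q_2}{q_3}} = \pd{q_1}{\bitq{q_3}{q_2}} = t(q_1,q_3,q_2)$, so $t$ is already invariant under the transposition exchanging its last two arguments. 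Since $\mathfrak{S}_3$ is generated by that transposition together with the one exchanging the first two arguments, it remains only to prove the single identity
\[
\pd{q_1}{\bitq{q_2}{q_3}} = \pd{q_2}{\bitq{q_1}{q_3}}
\qquad\text{for all } q_1,q_2,q_3 \in R[x,y,z]_4.
\]

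Both sides are $R$-trilinear in $(q_1,q_2,q_3)$, so it suffices to verify the identity when each $q_i$ ranges over the fifteen degree-four monomials in $x,y,z$. To cut down the number of cases I would use the equivariance of the whole construction under linear changes of coordinates: $H(-)$ is a contravariant of the ternary quartic, and $\pd{-}{-}$ is, up to the scalar $12$, the standard apolarity pairing between $R[x,y,z]_4$ and $R[u,v,w]_4$, which is $GL_3$-invariant. Hence $t$ is $GL_3$-equivariant, transforming by a power of the determinant, and in particular the difference $t(q_1,q_2,q_3) - t(q_2,q_1,q_3)$ transforms by a nonzero sign under the subgroup $\mathfrak{S}_3 \subset GL_3$ permuting the variables $x,y,z$ simultaneously in all three arguments. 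Vanishing on one representative therefore forces vanishing on its whole orbit, so one may normalize up to permutations of the variables, reducing the monomial types to the four representatives $x^4$, $x^3y$, $x^2y^2$, $x^2yz$.

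The verification is then a direct computation with the identities collected in Table~\ref{tab:contih}, which evaluate $\bitq{q_2}{q_3}$ on monomial inputs (for the $x^2$-terms one polarizes the formula $H(x^2 q_2) = q_2(-w,v)^2$ via~\eqref{e:bil}); pairing the result against $q_1$ through $\pd{-}{-}$ extracts a single coefficient, and one compares the two sides. The main obstacle is precisely that this comparison seems to admit no structural explanation: the cleanest conceptual statement would be that the space of $GL_3$-invariant trilinear forms on $R[x,y,z]_4$ contains no copy of the sign or standard representation of $\mathfrak{S}_3$, so that invariance forces symmetry. Establishing such a statement representation-theoretically is delicate over $\mathbb{Z}$ and in small characteristic, which is exactly the regime the paper requires; I therefore expect the robust route to be the characteristic-free case check outlined above, in accordance with the authors' remark that they know no conceptual justification.
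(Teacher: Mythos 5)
Your proposal follows the same route as the paper: both first use the symmetry of $\bitq{-}{-}$ to get invariance under the transposition of the last two arguments, and then reduce to the single remaining identity $t(q_1,q_2,q_3)=t(q_2,q_1,q_3)$, which the paper dispatches as ``a standard calculation, that we omit'' and which you carry out by a trilinearity-plus-equivariance reduction to finitely many monomial cases. Your outline is a correct and characteristic-free way to organize that omitted computation, so the two arguments are essentially identical.
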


\begin{proof}
Since the bilinear form $\bitq{-}{-}$ is symmetric, the trilinear form $t$ is symmetric under the transposition of its second and third argument: $t(q_1,q_2,q_3)$ and $t(q_1,q_3,q_2)$ are equal.  Thus, to conclude it suffices to show that the identity $t(q_1,q_2,q_3) = t(q_2,q_1,q_3)$ holds.  This is the result of a standard calculation, that we omit.
\end{proof}

The expression
\begin{equation} \label{eq:Ainv}
A(q) = t(q,q,q) = \pd{q}{H(q)} = \sum_{i,j,k \geq 0} \frac{i!j!k!}{2} a_{ijk} h_{ijk}
\end{equation}
is an invariant of ternary quartic forms under $SL_{3,R}$: see~\cite{sal}*{p.~251-252, \S292-293}.

We denote by $\GL(R)$ the Lie algebra of $GL_3(R)$, that is, the free $R$-module of rank $9$ generated by the symbols $\xi \partial_\eta $ as $\xi$ and $\eta$ range among the variables $x,y,z$.  We define $\SL(R) \subset \GL(R)$ as the submodule where the sum of the coefficients of $x\partial_x , y\partial_y , z\partial_z$ vanishes: this is the Lie algebra of $SL_3(R)$.

The Lie algebra $\GL(R)$ acts on $R[x,y,z]$ as differential operators on polynomials.  By construction, this action preserves the $R$-module of ternary forms of any degree.  We obtain an action of $\GL(R)$ on $R[x,y,z]_4$.

\begin{Prop} \label{prop:lie}
For every element $\mathfrak{g}$ in $\SL(R)$, the identity
\[
t (\mathfrak{g} q,q,q) = \pd{\mathfrak{g} q}{ H(q) } = 0
\]
holds.
\end{Prop}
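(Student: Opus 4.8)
The plan is to deduce the identity from the $SL_3$-invariance of the cubic form $A(q)=t(q,q,q)=\pd{q}{H(q)}$ recorded just after Equation~\eqref{eq:Ainv}, by differentiating the invariance along a one-parameter subgroup. The only genuine subtlety is that the statement must hold over an arbitrary ring $R$, whereas the differentiation argument naturally lives in characteristic $0$ (and introduces a factor of $3$); I would dispose of this by a universal-coefficient reduction.

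First I would record that, for a fixed element $\mathfrak{g}\in\SL(R)$ and a quartic $q=\sum a_{ijk}x^iy^jz^k$, the expression $t(\mathfrak{g}q,q,q)=\pd{\mathfrak{g}q}{H(q)}$ is a polynomial with integer coefficients in the entries of $\mathfrak{g}$ and the $a_{ijk}$. Indeed $\mathfrak{g}q$ is $\mathbb{Z}$-linear in both $\mathfrak{g}$ and the coefficients of $q$, the harmonic quartic $H(q)$ has entries that are integral quadratic forms in the $a_{ijk}$, and the pairing $\pd{-}{-}$ is integral because each $\frac{i!j!k!}{2}$ is an integer. Moreover the expression is $\mathbb{Z}$-linear in $\mathfrak{g}$, so it suffices to let $\mathfrak{g}$ range over a $\mathbb{Z}$-basis of the free module $\SL(\mathbb{Z})$; for each such fixed $\mathfrak{g}$ the assertion becomes an identity of polynomials in $\mathbb{Z}[a_{ijk}]$. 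By the usual base-change principle it is then enough to prove that this polynomial vanishes over the complex numbers, since an integral polynomial that is identically zero over $\mathbb{C}$ is the zero polynomial and hence maps to zero in $R[a_{ijk}]$ for every $R$.

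Over $\mathbb{C}$ I would argue as follows. The differential-operator action of $\mathfrak{g}\in\SL(\mathbb{C})$ on $\mathbb{C}[x,y,z]_4$ is the infinitesimal form of the $SL_3(\mathbb{C})$-action: setting $g_t=\exp(t\mathfrak{g})$, which lies in $SL_3(\mathbb{C})$ precisely because $\mathfrak{g}$ is traceless, one has $\frac{d}{dt}\big|_{t=0}\,g_t\cdot q=\pm\,\mathfrak{g}q$, the sign being immaterial for what follows. Since $A$ is $SL_3$-invariant, $A(g_t\cdot q)=A(q)$ is constant in $t$, so its derivative at $t=0$ vanishes. Writing $g_t\cdot q=q\pm t\,\mathfrak{g}q+O(t^2)$ and expanding $A(q)=t(q,q,q)$, the symmetry of $t$ from Lemma~\ref{lem:trimi} collapses the three linear terms, giving $\frac{d}{dt}\big|_{t=0}A(g_t\cdot q)=\pm\,3\,t(\mathfrak{g}q,q,q)$. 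Hence $3\,t(\mathfrak{g}q,q,q)=0$ over $\mathbb{C}$; dividing by $3$ yields $t(\mathfrak{g}q,q,q)=0$, and combined with the reduction above this establishes the identity over every $R$.

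I expect the points needing the most care to be the identification of the differential-operator action with the infinitesimal group action, and the bookkeeping of the constant in $\frac{d}{dt}\big|_{t=0}A(g_t\cdot q)=\pm\,3\,t(\mathfrak{g}q,q,q)$; both are standard, the latter being exactly the place where the symmetry of $t$ is used. It is worth emphasizing that the hypothesis $\mathfrak{g}\in\SL(R)$ is precisely what forces the right-hand side to vanish: for a general $\mathfrak{g}\in\GL$ the same computation would instead return a nonzero multiple of $(\mathrm{tr}\,\mathfrak{g})\,A(q)$, reflecting the fact that $A$ is only a relative invariant under the full $GL_3$.
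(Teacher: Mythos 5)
Your proposal is correct and follows essentially the same route as the paper's own argument: reduce to the universal case over $\mathbb{Z}[a_{ijk}]$, invoke the $SL_3$-invariance of $A(q)=t(q,q,q)$, use the symmetry of $t$ from Lemma~\ref{lem:trimi} to obtain the factor of $3$, and divide by $3$ in the torsion-free universal ring. Your explicit differentiation along $\exp(t\mathfrak{g})$ over $\mathbb{C}$ merely spells out the step the paper states as ``$\mathfrak{g}A=0$''.
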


\begin{proof}
Choosing a basis for $\SL(R)$ over $R$, we can prove the statement by performing $8$ direct computations, substituting the explicit expression for the coefficients $h_{ijk}$ of $H(q)$.

Alternatively, we can argue as follows.  It suffices to show the result in the case in which $R$ is $\mathbb{Z}[a_{ijk}]$, with $a_{ijk}$ independent variables, and $q(x,y,z)$ is the quartic form $q(x,y,z) = \sum a_{ijk} x^i y^j z^k$.  Since $A(q)$ is invariant under the action of $SL_3(\mathbb{Z})$, the identity $\mathfrak{g} A= 0$ holds for every element $\mathfrak{g}$ in the Lie algebra of $SL_3(\mathbb{Z})$.  Since $t$ is trilinear and symmetric (Lemma~\ref{lem:trimi}), the identity $\mathfrak{g} t(q,q,q) = 3 t(\mathfrak{g} q,q,q)$ holds.  Combining these identities and dividing by $3$, we deduce the equality
\[
0 = t (\mathfrak{g} q,q,q) = \pd{\mathfrak{g} q}{ H(q) }
\]
for every element $\mathfrak{g} \in \SL(R)$.
\end{proof}

\begin{Cor} \label{cor:lie}
The identities
\[
\begin{array}{ll}
1. &
\displaystyle
\sum_{i,j,k \geq 0} (i-j) \frac{i!j!k!}{2} a_{ijk} h_{ijk} = 0
\\[13pt]
2. &
\displaystyle
\sum_{i,j,k \geq 0} (i+1) \frac{i!j!k!}{2} a_{(i+1)(j-1)k} h_{ijk} = 0
\\[13pt]
3. &
\displaystyle
\sum_{i,j,k \geq 0} (i+1) \frac{i!j!k!}{2} a_{(i+1)j(k-1)} h_{ijk} = 0
\end{array}
\qquad
\begin{array}{ll}
4. &
\displaystyle
\sum_{i,j,k \geq 0} (j-k) \frac{i!j!k!}{2} a_{ijk} h_{ijk} = 0
\\[13pt]
5. &
\displaystyle
\sum_{i,j,k \geq 0} (j+1) \frac{i!j!k!}{2} a_{i(j+1)(k-1)} h_{ijk} = 0
\\[13pt]
6. &
\displaystyle
\sum_{i,j,k \geq 0} (j+1) \frac{i!j!k!}{2} a_{(i-1)(j+1)k} h_{ijk} = 0
\end{array}
\]
hold.
\end{Cor}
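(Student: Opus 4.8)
The plan is to obtain each of the six identities by evaluating the relation $\pd{\mathfrak{g} q}{H(q)} = 0$ of Proposition~\ref{prop:lie} at a suitable basis element $\mathfrak{g}$ of $\SL(R)$. Write $q = \sum a_{ijk} x^i y^j z^k$ and $H(q) = \sum h_{ijk} u^i v^j w^k$. Because the pairing $\pd{-}{-}$ is diagonal on monomials, any ternary quartic $p = \sum c_{ijk} x^i y^j z^k$ satisfies
\[
\pd{p}{H(q)} = \sum_{i,j,k \geq 0} \frac{i!j!k!}{2} c_{ijk} h_{ijk}.
\]
Thus, to prove each identity I only need to compute the coefficients $c_{ijk}$ of the form $\mathfrak{g} q$ for the appropriate differential operator $\mathfrak{g}$, and then quote Proposition~\ref{prop:lie}.

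First I would dispatch the two \emph{diagonal} identities. The element $\mathfrak{g} = x\partial_x - y\partial_y$ lies in $\SL(R)$ since the sum of its diagonal coefficients vanishes, and it acts on monomials by $\mathfrak{g}(x^i y^j z^k) = (i-j) x^i y^j z^k$; hence the coefficient of $x^i y^j z^k$ in $\mathfrak{g} q$ is $(i-j) a_{ijk}$, and substituting into the displayed formula gives identity~$1$. Taking instead $\mathfrak{g} = y\partial_y - z\partial_z$ produces identity~$4$ in exactly the same way.

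For the remaining four identities I would use the nilpotent generators $\xi \partial_\eta$ with $\xi \neq \eta$, each of which lies in $\SL(R)$ automatically because it has no diagonal part. For example, $\mathfrak{g} = y\partial_x$ sends $x^i y^j z^k$ to $i\, x^{i-1} y^{j+1} z^k$, so after shifting indices the coefficient of $x^i y^j z^k$ in $\mathfrak{g} q$ equals $(i+1) a_{(i+1)(j-1)k}$; Proposition~\ref{prop:lie} then yields identity~$2$. The same computation with $\mathfrak{g} = z\partial_x$, $\mathfrak{g} = z\partial_y$, and $\mathfrak{g} = x\partial_y$ gives identities~$3$, $5$, and~$6$ respectively. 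The two unused nilpotent generators $x\partial_z$ and $y\partial_z$ would yield two further identities of the same shape, which are simply not recorded here because they are not needed later.

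I do not expect a genuine obstacle: the argument is a mechanical translation of Lie-algebra relations into coefficient form, and the only point requiring care is the bookkeeping of the shifted subscripts together with the convention that $a_{ijk} = 0$ whenever an index is negative. This convention is in fact forced by the computation, since a derivative $\partial_\eta$ annihilates any monomial of $\eta$-degree zero, so the putative boundary terms never arise. I would therefore verify the index shift carefully for a single off-diagonal generator and observe that the remaining cases are identical up to relabelling the variables.
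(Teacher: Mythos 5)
Your proposal is correct and matches the paper's proof exactly: the paper likewise derives each identity by applying Proposition~\ref{prop:lie} to the same six operators $x\partial_x - y\partial_y$, $y\partial_x$, $z\partial_x$, $y\partial_y - z\partial_z$, $z\partial_y$, $x\partial_y$, in that order. Your index-shift computations are the (omitted) routine verification the paper leaves to the reader.
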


\begin{proof}
Apply Proposition \ref{prop:lie} where $\mathfrak{g}$ in $\SL(R)$ is one of the differental operators in the following list
\[
\begin{array}{lrcl}
1. &
(x \partial_x - y \partial_y) \\[3pt]
2. &
y \partial_x \\[3pt]
3. &
z \partial_x
\end{array}
\qquad
\qquad
\begin{array}{lrcl}
4. &
(y \partial_y - z \partial_z) \hphantom{.} \\[3pt]
5. &
z \partial_y \hphantom{.}\\[3pt]
6. &
x \partial_y
\end{array}
\]
to prove the result.
\end{proof}

\section{Inflection lines of plane quartics and Hilbert schemes} \label{sec:alli}

We denote by $\Hilbz$ the Hilbert scheme over the integers parameterizing closed subschemes of dimension~$0$ and degree $24$ of $\Pdzd$.  We construct a rational map
\[
\begin{array}{rcl}
\F_{\mathbb{Z}} \colon \Pqz
& \dashrightarrow &
\Hilbz \\[3pt]
C & \longmapsto &
[\fl(C)]
\end{array}
\]
assigning to a general plane quartic $C$ the point $[\fl(C)]$ of $\Hilbz$ corresponding to the inflection scheme of $C$.  Recall that $\fl(C)$ is the intersection of the harmonic quartic $H(C)$ and sextic $K(C)$ associated to $C$.  From now on, we abuse the notation, and denote by $\fl(C)$ the point of the Hilbert scheme corresponding to the inflection scheme of $C$.  By Example~\ref{exa:klein}, the rational map $\F_{\mathbb{Z}}$ is defined on an open set whose image in $\Spec(\mathbb{Z})$ contains $\Spec(\mathbb{Z}[\frac{1}{3}])$.  By the valuative criterion of properness, the rational map $\F_{\mathbb{Z}}$ extends also to points lying above the prime $(3)$ of $\Spec(\mathbb{Z})$.

For the remainder of the section, we let $k$ denote a field.  The map $\F_{\mathbb{Z}}$ restricts a rational map
\[
\F \colon \Pq \dashrightarrow \Hilb .
\]
Let $\wtP$ be the closure in $\Pq\times \Hilb$ of the graph
\[
\bigl\{(C, \fl(C)) : C \text{ is a general plane quartic} \bigr\}
\]
and let
\[
\wtF \colon \wtP \longrightarrow \Hilb
\]
be the restriction to $\wtP$ of the projection of $\Pq\times \Hilb$ onto the second factor.

The problem of reconstructing a general plane quartic from its configuration of inflection lines is the question of deciding if the morphism $\wtF$ is birational onto its image.

\begin{Def} \label{def:rec}
Let $C \subset \bp$ be a plane quartic.  We say that $C$ is {\emph{reconstructible from a point $\fl \in \Hilb$}} if the fiber $\wtF^{-1}(\fl)$ only contains the point $(C,\fl)$.
\end{Def}

The existence of a quartic curve that is reconstructible from a point $\fl$ is not enough to conclude that the map $\wtF$ is birational onto its image: this simply implies that the fiber over $\fl$ consists of a single point, but not that it is reduced.

\begin{Def} \label{def:conf}
If $C$ is a plane quartic and $\fl$ is a point in $\Hilb$, we say that $\fl$ is a {\emph{configuration of inflection lines associated to $C$}} if the pair $(C , \fl )$ is in $\wtP$.
\end{Def}

Thus, the curve $C$ is reconstructible from $\fl$ if $\fl$ is a configuration of inflection lines associated to $C$ and no other plane quartic admits $\fl$ as an associated configuration of inflection lines.

There is a weaker notion of reconstructibility, where we require two plane quartics with the same configuration of inflection lines to be projectively equivalent and not necessarily equal.  We will work with the stricter notion (Definition~\ref{def:rec}), although the weaker one is well-suited for the moduli space of curves of genus~$3$.

We now study the indeterminacy locus of the rational map $\F$ and we give an explicit characterization of the totally harmonic quartic curves.  The totally harmonic quartic curves lie in the indeterminacy locus of $\F$ and play a crucial role in our argument.

\begin{Lem} \label{lem:defU}
Let $k$ be a field of characteristic coprime with~$6$ and let $C$ be a reduced quartic whose singular points have multiplicity~$2$.  The inflection scheme $\fl(C) \subset \bpd$ of $C$ is the complete intersection of $H(C)$ and $K(C)$.  In particular, $\fl(C)$ is a subscheme of dimension~$0$ and length~$24$ of $\bpd$ and the rational map $\F$ is defined at $C$.
\end{Lem}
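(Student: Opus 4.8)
The plan is to reduce everything to a single finiteness statement: I will show that the inflection scheme $\fl(C) = V\bigl(H(q),K(q)\bigr) \subset \bpd$ is zero-dimensional, and deduce the rest formally. Since $\dim \bpd = 2$ and $\fl(C)$ is cut out by the quartic $H(q)$ and the sextic $K(q)$, finiteness forces both forms to be nonzero and to have no common factor (a vanishing form or a common factor would produce a positive-dimensional locus). Hence $\fl(C)$ is a complete intersection of bidegree $(4,6)$; B\'ezout gives length $24$, and a zero-dimensional complete intersection of fixed bidegree varies in a flat family, so the assignment $C \mapsto \fl(C)$ is a morphism near $C$ that agrees with $\F$ on general quartics, i.e.\ $\F$ is defined at $C$. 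All the content therefore lies in the finiteness of $\fl(C)$, and the hypotheses (reduced, only double points, $\car k \neq 3$) are precisely what guarantee it.

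I would prove finiteness by contradiction. Suppose $\fl(C)$ contains an irreducible curve $\Gamma$, that is, a one-parameter family of lines $\ell_t$. Since $\car k \neq 3$, Remark~\ref{rem:ptitripli} (via Lemma~\ref{lem:z3}) identifies each point of $\fl(C)$ with a line $\ell$ for which $q|_\ell$ has a root of multiplicity at least $3$; equivalently $\ell$ meets $C$ at a single point $p$ with intersection multiplicity $I_p(\ell,C) \geq 3$. Such a point is unique, since two would force the intersection number to be at least $6 > 4$. If $\ell_t \subset C$ for every $t$, all the $\ell_t$ equal a common line component of $C$, contradicting $\dim \Gamma = 1$; so for generic $t$ the intersection is proper and $t \mapsto p_t$ defines a rational map $\Gamma \dashrightarrow C$.

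I then split according to where the generic $p_t$ lies. If $p_t$ is generically a singular point, then, $C$ being reduced and so having finitely many singular points, $p_t$ is constant, equal to some double point $p_0$, and every $\ell_t$ passes through $p_0$ with $I_{p_0}(\ell_t,C) \geq 3$. But the tangent cone of $C$ at the double point $p_0$ has degree $2$, so only the (at most two) lines it defines meet $C$ at $p_0$ with multiplicity $\geq 3$, contradicting $\dim \Gamma = 1$; here the multiplicity-$2$ hypothesis is essential, since a triple point would admit a whole pencil of such lines. If instead $p_t$ is generically a smooth point, then $\ell_t = T_{p_t}C$ and $p_t$ is a flex, so $\{p_t\}$ is a one-dimensional family of flexes in the smooth locus of $C$. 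On a non-linear irreducible component this is impossible, because (in characteristic coprime to $6$) a reduced irreducible plane curve is a component of its own Hessian only when it is a line; and if $p_t$ lay on a line component, its tangent would be that fixed component, forcing $\ell_t$ constant and again contradicting $\dim \Gamma = 1$. Reducedness is used here as well: a double line component would itself generate a pencil of inflection lines.

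The main obstacle is this last step, namely controlling the flexes of a curve that may be reducible and singular. The cleanest route is the dichotomy above combined with the classical fact that only lines occur as components of their own Hessian: this isolates the finitely many honest flexes on the non-linear components and shows that any line component contributes just the single dual point $[\ell]$ rather than a family. Once $\dim \fl(C) = 0$ is established, the nonvanishing and coprimality of $H(q)$ and $K(q)$, the length-$24$ count, and the definedness of $\F$ at $C$ all follow at once.
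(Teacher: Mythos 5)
Your argument has the same skeleton as the paper's proof: reduce everything to the finiteness of $\fl(C)$, observe that a line in $\fl(C)$ not contained in $C$ has a unique contact point of multiplicity at least~$3$, and split according to whether that point is singular or smooth on $C$, using the degree-$2$ tangent cone to dispose of the singular case. (The paper runs this dichotomy line by line rather than along a one-parameter family, and absorbs the case $\ell \subset C$ into the singular case, but these are cosmetic differences; your remark that a triple point would produce a whole pencil of such lines is exactly where the multiplicity-$2$ hypothesis enters for the authors as well.) The one genuine divergence is the smooth case, and there your justification is the weak point. The ``classical fact'' that a reduced irreducible plane curve is a component of its own Hessian only when it is a line is a characteristic-zero statement: in characteristic $p>0$ there exist irreducible non-linear curves every smooth point of which is a flex (they necessarily have degree at least $p$), and for such curves the Hessian criterion for flexes also degenerates, so the fact is false in the generality in which you invoke it. What rescues your step is the degree bound: a component of a quartic has degree at most $4$, which is smaller than any positive characteristic coprime with~$6$; since the generic order of contact of the tangent line with an irreducible non-linear plane curve is either $2$ or divisible by the characteristic, and it is at most the degree, it must equal $2$ here, so the smooth flexes of a non-linear component form a proper closed, hence finite, subset. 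This is precisely the content of the paper's argument, packaged there as the separability of the Gauss map of each non-linear component ($\ell$ then lies in its finite branch locus). Your proof is therefore correct once you either restrict the Hessian claim to degrees below the characteristic and justify it by the order-of-contact theorem, or replace that sentence by the Gauss-map argument. The formal bookkeeping at the two ends (nonvanishing and coprimality of $H(q)$ and $K(q)$, B\'ezout, flatness, and the definedness of $\F$ at $C$) is sound and is left implicit in the paper.
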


\begin{proof}
It suffices to show that if $C$ is reduced with singular points of multiplicity~$2$, then the scheme $\fl(C)$ is finite.  The lines corresponding to points in $\fl(C)$ are lines with a point of intersection multiplicity at least $3$ with $C$.  Let $\ell$ be one such line in $\fl(C)$.

Suppose first that $\ell$ contains a singular point $p$ of $C$.  In this case, $\ell$ must be in the tangent cone to $C$ at $p$ and there are only a finite number of such lines.  This in particular takes care of the case in which $\ell$ is a component of $C$.

Suppose now that $\ell$ and $C$ meet entirely at smooth points.  Let $p$ be the point of multiplicity at least $3$ in $\ell \cap C$ and let $C_p \subset C$ denote the irreducible component of $C$ containing $p$.  The curve $C_p$ is not a line, by the first part of the argument.  Therefore, the line $\ell$ corresponds to a singular point of the curve $C_p^\vee $, dual to $C_p$.  We deduce that the Gauss map associated to $C_p$ is not constant and that $\ell$ is in its branch locus.  Since the characteristic of the ground-field is coprime with~$6$, the Gauss map is separable: the line $\ell$ is contained in the finite set of ramification points of the Gauss map and we are done.
\end{proof}

The proof of Lemma~\ref{lem:defU} reduces to the one of~\cite{PT1}*{Proposition~2.5} in the case of fields of characteristic~0.  Let $C$ be a reduced plane quartic curve with singular points of multiplicity~$2$.  There is a well-defined configuration of inflection lines $\F(C)$ of degree $24$ in $\bpd$ and it coincides with $\fl(C)$.  This is not true if $C$ is non-reduced or has singular points of multiplicity at least~$3$: in these cases $\fl(C)$ is not finite and the following remark shows that there is more than one configuration of inflection lines associated to $C$.

\begin{Rem} \label{rem:flf}
Let $C \subset \bp$ be a plane quartic curve.  A line $\ell$ has intersection multiplicity at least $3$ at some point of $C$ (or is contained in $C$) if and only if $\ell$ appears in some configuration of inflection lines associated to $C$.  Indeed, if $\F$ is defined at the curve $C$, then $\F(C)$ consists of lines with intersection multiplicity at least $3$ at some point of $C$.  Since $\wtP$ is the closure of the graph of $\F$ and intersection multiplicities are upper semi-continuous, the same holds true for the lines appearing in every configuration of inflection lines associated to any plane quartic $C$, no matter how singular $C$ is.  Thus, every line appearing in every configuration of inflection lines associated to $C$ has intersection multiplicity at least $3$ with $C$.  Conversely, let $\ell$ be a line with intersection multiplicity at least $3$ with $C$ at a point $p$ (possibly, $C$ contains $\ell$).  Let $C_{\ell,p}$ be a plane quartic curve where $\F$ is defined and admitting $\ell$ as an inflection line at the point $p$.  The rational map $\F$ is defined at the generic point of the pencil generated by $C$ and $C_{\ell,p}$ and the line $\ell$ appears in the configuration associated to the generic element of the pencil.  Therefore, the line $\ell$ appears in some configuration of inflection lines associated to $C$.

If the characteristic of the field $k$ is different from $3$, by Remark~\ref{rem:ptitripli}, a line $\ell$ is in the inflection scheme $\fl(C)$ if and only if $\ell$ is in some configuration of inflection lines associated to $C$.  We list the dimensions of inflection schemes of all plane quartics in Table~\ref{tab:sindim}.
\begin{table}[h]
    \centering
    \begin{tabular}{|c|c|}
    \hline
        Singularities of $C$ & $\dim \fl(C)$ \\[3pt]
    \hline
        isolated double points & 0 \\[3pt]
        double conics &  $\geq 1$ \\[3pt]
        triple points & $\geq 1$ \\[3pt]
        isolated triple locus & $\leq 1$ \\[3pt]
        line with multiplicity at least $3$ & 2 \\[3pt]
    \hline
    \end{tabular}
    \caption{{\protect\vphantom{${W^W}^W$}}Dimensions of the inflection scheme of a plane quartic $C$}
    \label{tab:sindim}
\end{table}
\end{Rem}

In our next result, we classify totally harmonic ternary quartic forms (Definition~\ref{def:tothar}) over algebraically closed fields of any characteristic.

\begin{Prop} \label{prop:nonsva}
Let $k$ be an algebraically closed field of characteristic $p \geq 0$ and let $q(x,y,z) \in k[x,y,z]_4$ be a ternary quartic form with coefficients in $k$.  The form $q$ is totally harmonic if and only if after a linear change in the variables in $x,y,z$, the pair $(p , q)$ is equal to one of the following:
\begin{itemize}
\item
    $(p , f(y,z))$, where $f$ is one of the binary forms mentioned in Remark~\ref{rem:orbite};
\item
    $(p , x^4 + y^3 z)$, where $p \in \{ 2,3 \}$ (singularity of type $E_6$);
\item
    $(3 , x (x^2 y + z^3))$ (singularity of type $E_7$);
\item
    $(3 , x^4+y^4+z^4)$ (Fermat curve or Klein curve, they are isomorphic over algebraically closed fields of characteristic $3$).
\end{itemize}
\end{Prop}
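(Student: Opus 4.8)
The plan is to translate ``totally harmonic'' into a condition on line sections and then stratify by the geometry of $C = V(q)$. By Equation~\eqref{e:eval}, the form $q$ is totally harmonic if and only if $S(q|_\ell) = 0$ for every line $\ell \subset \bp$; by Remark~\ref{rem:orbite} this says that every binary quartic $q|_\ell$ lies in one of the three $SL_2(k)$-orbits making up $V(S)$, namely distinct roots in the special orbit, a root of multiplicity exactly $3$, or a root of multiplicity exactly $4$. Since $H(-)$ is a contravariant, the totally harmonic locus is stable under $GL_3$, so I may change coordinates freely. Sufficiency of the four families is a direct check: for $q = f(y,z)$ a line section transforms $f$ by a substitution in $GL_2$ (or degenerates it to a fourth power of a linear form), so $S(q|_\ell)$ is a scalar multiple of $S(f) = 0$; for the remaining forms one computes $H(q)$ in the stated characteristic using Table~\ref{tab:contih} (for instance $H(x^4 + y^3z) = H(x^4) + \bitq{x^4}{y^3z} + H(y^3z)$, and the middle term is $12\,(y^3z)(-w,v)$, so $H$ vanishes precisely when $p \in \{2,3\}$) and Equation~\eqref{e:quac3} (the Fermat form contains none of the relevant monomials, so $H$ vanishes in characteristic $3$).

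The core of the necessity direction is the uniform observation that \emph{every smooth point of a totally harmonic quartic $C$ is a flex.} Indeed, the tangent line $T_p$ at a smooth point $p$ produces a section $q|_{T_p}$ with a root of multiplicity at least $2$ at $p$; since $q|_{T_p}$ lies in $V(S)$ and has a repeated root, Remark~\ref{rem:orbite} forces it to have a root of multiplicity at least $3$. This root cannot sit at a second point $p' \neq p$, as the two contributions would give intersection multiplicity at least $2 + 3 = 5 > 4$; hence $T_p$ meets $C$ at $p$ with multiplicity at least $3$. Working one component at a time, the same analysis of general line sections controls reducible and non-reduced $C$.

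In characteristic coprime to $6$ this closes the problem. A flex is then detected by the Hessian, so ``every smooth point is a flex'' would force $q$ to divide $\mathrm{He}(q)$; comparing degrees and using irreducibility of components rules out any component of degree $\geq 2$, so $C$ is a union of (possibly repeated) lines. A section of several lines has constant cross-ratio only if the lines are concurrent, and it is equianharmonic only in the orbits of Remark~\ref{rem:orbite}; moving the common point to $[1:0:0]$ exhibits $q$ as $f(y,z)$, giving item~$1$. In characteristic $3$ the argument is instead purely linear: by Equation~\eqref{e:quac3} the harmonic quartic is the square of an explicit conic, so $H(q) = 0$ is equivalent to the vanishing of the six coefficients of $x^2y^2, x^2z^2, y^2z^2, x^2yz, xy^2z, xyz^2$. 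The totally harmonic locus is thus the linear system spanned by $x^4, y^4, z^4$ and the six monomials $x^3y, x^3z, xy^3, y^3z, xz^3, yz^3$, and it remains to show that its $GL_3$-orbits are exhausted by the Fermat/Klein, $E_7$, $E_6$ and cone normal forms; this is a finite but nontrivial orbit computation inside a fixed $\bpd$-linear space.

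The main obstacle is characteristic $2$. There the Hessian no longer detects flexes and the Gauss map may be inseparable, so ``every smooth point is a flex'' does not force lines: the irreducible curve $x^4 + y^3z$ is totally harmonic, and each of its tangent lines meets it with multiplicity $4$, the section being a fourth power by Frobenius. I would exploit exactly this phenomenon: the flex property together with the characteristic forces each non-linear component to be \emph{strange}, with all tangent lines passing through a single point and cutting the curve in a fourth power; normalising this strange point and the constant tangent direction reduces $q$ to a short list of normal forms, and a direct evaluation of $H(q)$ then selects only the cones and the curve $x^4 + y^3z$. Carrying out this strange-curve analysis, and in particular ruling out every smooth totally harmonic quartic in characteristic $2$, is the delicate step of the argument.
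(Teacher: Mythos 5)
Your reduction of ``totally harmonic'' to the condition $S(q|_\ell)=0$ for every line, the sufficiency checks, and the observation that every smooth point of a totally harmonic quartic is a flex are all correct and consistent with what the paper uses implicitly. However, the proposal has genuine gaps exactly where the proposition is hard, namely in characteristics $2$ and $3$, which are the only characteristics in which the non-conical forms ($x^4+y^3z$, $x(x^2y+z^3)$, the Fermat quartic) occur. For characteristic $2$ you explicitly stop at ``carrying out this strange-curve analysis \dots is the delicate step,'' and for characteristic $3$ you reduce to ``a finite but nontrivial orbit computation'' of $GL_3$ acting on a $9$-dimensional linear system and do not perform it. That orbit problem is not routine: the action on the coefficient matrix $A$ is the Frobenius-twisted congruence $A\mapsto \Fr(M)^tA M$, and the paper's treatment of the invertible case requires a genuinely nontrivial idea (a degree count on the locus $\Delta_A$ where $Af$ and $\Fr(A^t)\Fr^2(f)$ are proportional, producing a point $l$ with $q(l)\neq 0$, which splits off an $x'^4$ summand). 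Neither of these two cases can be waved through.

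There is also a flaw in the part you do carry out. In characteristic coprime with~$6$ you argue that ``every smooth point is a flex'' forces $q\mid \mathrm{He}(q)$ and then that ``comparing degrees'' rules out components of degree $\geq 2$. Since $\deg \mathrm{He}(q)=6>4\geq\deg q$, degree comparison alone rules out nothing; what you actually need is that an irreducible plane curve of degree $2\le d\le 4$ in characteristic $p\ge 5$ has generic tangency order exactly $2$ (because a generic order $m\ge 3$ would require $p\mid m$ or a strange curve, both impossible here), so that a general smooth point is not a flex. The conclusion is correct but the stated mechanism is not. Finally, note that the paper avoids your three-way case split altogether: for all characteristics different from $3$ it normalizes a smooth point to $[1,0,0]$ with tangent $y=0$, kills the $x^2y\,q_1$ term, and reads off $\alpha=0$, $q_3=0$, $\partial_vq_4(-w,v)=0$ and $S(q_4)=0$ from the expansion of $H(q)$ in powers of $u$; characteristic $2$ then drops out as the single extra solution $q_4=\lambda y^4+\nu z^4$, with no strange-curve theory needed. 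You may find it easier to adopt that uniform local computation than to complete the characteristic~$2$ branch as proposed.
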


\begin{proof}
It is an immediate check to verify that the stated forms are totally harmonic.  To prove the converse, we can clearly reduce to the case in which $q$ is not the zero form.

If the plane quartic curve $Q$ with equation $q=0$ does not have a smooth point, then it follows that $q(x,y,z)$ is the square of a not necessarily irreducible quadratic form $c(x,y,z)$.  If the quadratic form $c$ defines a conic with a smooth point, then the restriction of the quartic polynomial $q=c^2$ to a general line in $\bp$ is a polynomial with two distinct double roots.  In particular, the invariant $S$ of such a binary form is non-zero, and we conclude that $q$ is not totally harmonic.  Otherwise, the conic $c=0$ has no smooth points and hence its equation is the square of a linear form $\ell$, and clearly $q=\ell^4$ is totally harmonic and of the required form.

We have therefore reduced to the case in which the quartic curve $Q$ has smooth points: choose coordinates in $\bp$ so that the point $[1,0,0]$ lies in the smooth locus of $Q$ and the tangent line to $Q$ at $[1,0,0]$ is the line with equation $y=0$.  An equation of the curve $Q$ has the form
\begin{equation} \label{e:qforma}
x^3 y + x^2 q'_2(y,z) + x q'_3(y,z) + q'_4(y,z) = 0,
\end{equation}
where $q'_2,q'_3,q'_4$ are binary forms of degrees $2,3,4$, respectively.

Suppose first that the characteristic of the field is different from $3$.  Write $q'_2(y,z) = y q_1(y,z) + \alpha z^2$, where $q_1$ is a binary linear form and $\alpha \in k$ is a constant.  Changing coordinates by $(x,y,z) \mapsto (x-\frac{1}{3} q_1 , y , z)$, the equation for $Q$ simplifies to
\[
x^3 y + \alpha x^2 z^2 + x q_3(y,z) + q_4(y,z) = 0,
\]
for some binary forms $q_3,q_4$ of respective degrees $3,4$.  Using the equation
\[
H(x^3 y + \alpha x^2 z^2 + x q_3(y,z) + q_4(y,z)) = 0,
\]
and Equation~\eqref{e:bil}, we collect together the expressions with respect to the power of $u$ appearing in the monomials.
We only need the following:
\begin{eqnarray*}
u^0 \colon & \bitq{x^3 y}{x q_3(y,z)} + \alpha^2 H(x^2 z^2) & = 0 \\
u^1 \colon & \bitq{x^3 y}{q_4(y,z)} + \alpha \bitq{x^2 z^2}{x q_3(y,z)} & = 0 \\
u^4 \colon & H (q_4(y,z)) = S(q_4(y,z)) u^4 & = 0.
\end{eqnarray*}
We analyze the condition on $u^0$.  The equality $\alpha = 0$ follows since $\alpha^2$ is the only coefficient of $v^4$.   Moreover, from the equality $\bitq{x^3y}{xq_3(y,z)} = 3 w q_3(-w,v) = 0$, we deduce that $q_3(y,z)=0$.

We are left with the equalities $\bitq{x^3y}{q_4(y,z)} = - 3 \partial_v q_4(-w,v) = 0$ and $S(q_4(y,z)) = 0$.  If the characteristic of the ground field $k$ is not only different from $3$, but also different from $2$, then we deduce that $q_4(y,z) = \lambda y^4$, for some constant $\lambda \in k$ and we are done, since $q=x^3y + \lambda y^4$.  If the characteristic of the ground field $k$ is equal to $2$, then we obtain that $q_4(y,z) = \lambda y^4 + \mu y^2 z^2 + \nu z^4$, for some constants $\lambda,\mu,\nu \in k$.  The condition $S(q_4) = 0$ becomes $\mu^2 = 0$, so that $q(x,y,z) = x^3 y + \lambda y^4 + \nu z^4$.  If $\nu = 0$, then $q$ is of the required form; otherwise, choose $\lambda ',\nu' \in k$ satisfying $\lambda'^4=\lambda $ and $\nu'^4=\nu $ and change coordinates by $(x,y,z) \mapsto (x,y,(z+\lambda'y)/\nu')$ to transform $q$ into $x^3y + z^4$, and we conclude after a cyclic permutation of the coordinates.

We now assume that the characteristic of the ground field is $3$.  By Equation~\eqref{e:quac3} the condition $H(q) = 0$ implies that $q$ is of the form
\[
q(x,y,z) =
a_{11} x^4 + a_{12} x^3 y + a_{13} x^3 z +
a_{21} x y^3 + a_{22} y^4 + a_{23} y^3 z +
a_{31} x z^3 + a_{32} y z^3 + a_{33} z^4,
\]
for some constants $a_{ij} \in k$.  Denote by $A$ the $3 \times 3$ matrix of the coefficients of $q$:
\begin{eqnarray*}
q(x,y,z) & = &
\begin{pmatrix}
x^3 & y^3 & z^3
\end{pmatrix}
\begin{pmatrix}
a_{11} & a_{12} & a_{13} \\
a_{21} & a_{22} & a_{23} \\
a_{31} & a_{32} & a_{33}
\end{pmatrix}
\begin{pmatrix}
x \cr y \cr z
\end{pmatrix} .
\end{eqnarray*}
For any matrix $M$ with coefficients in $k$, denote by $\Fr(M)$ the matrix obtained from $M$ by applying the Frobenius automorphism to all the entries of $M$.  Explicitly, if $M$ has entries $(m_{ij})$, then $\Fr(M)$ has entries $(m_{ij}^3)$.  If $M$ is a $3 \times 3$ matrix, then the identity
\[
q \left( M \cdot
\begin{pmatrix}
x & y & z
\end{pmatrix} ^t \right)
= \begin{pmatrix}
x^3 & y^3 & z^3
\end{pmatrix}
\Fr(M)^t A M
\begin{pmatrix}
x & y & z
\end{pmatrix} ^t
\]
holds.
We are going to use the transformation rule $A \mapsto \Fr(M)^t A M$ to find a simpler form for the matrix $A$, using a suitable matrix $M$.

\subsubsection*{Case 1: the matrix $A$ is invertible}
Let $f_1,f_2,f_3$ be homogeneous coordinates on $\bp$.  First, we look for equations on the coordinates of the vector $f = (f_1 , f_2 , f_3)^t$ so that the conditions $\Fr(x,y,z) A f = 0$ and $\Fr \bigl( \Fr(f)^t A (x,y,z)^t \bigr) = 0$ coincide.
Denote by $\alpha_1, \alpha_2, \alpha_3$ the linear forms in $f$ appearing as entries of $A (f_1,f_2,f_3)^t$ and by $\beta_{1}, \beta_{2}, \beta_{3}$
the linear forms in $f_1^9,f_2^9,f_3^9$ appearing as entries of $(f_1^9,f_2^9,f_3^9) \Fr(A)$.  Define a skew-symmetric matrix $B$ and a $2 \times 3$ matrix $K$ by the formulas
\[
B = \begin{pmatrix}
0 & \beta_3 & - \beta_2 \cr
- \beta_3 & 0 & \beta_1 \cr
\beta_2 & - \beta_1 & 0
\end{pmatrix}
\quad \quad {\textrm{and}} \quad \quad
K  = \begin{pmatrix}
\alpha_{1} & \alpha_{2} & \alpha_{3} \\[4pt]
\beta_{1} & \beta_{2} & \beta_{3}
\end{pmatrix}.
\]
Imposing the proportionality of the vectors $A f$ and $\Fr(A^t) \Fr^2(f)$ is equivalent to imposing the condition that the rank of the matrix $F$ is at most $1$.  Let $\Delta_A$ be the zero locus of the three $2 \times 2$ minors of $K$
\[
\Delta_A \colon \quad
\biggl\{
(\alpha_2 \beta_3 - \alpha_3 \beta_2) = 0 , \quad \quad
(- \alpha_1 \beta_3 + \alpha_3 \beta_1) = 0 , \quad \quad
(\alpha_1 \beta_2 - \alpha_2 \beta_1) = 0 \biggr\} ,
\]
with associated Jacobian matrix
\[
J = \begin{pmatrix}
\beta_3 \nabla \alpha_2 - \beta_2 \nabla \alpha_3 \\[4pt]
- \beta_3 \nabla \alpha_1 + \beta_1 \nabla \alpha_3 \\[4pt]
\beta_2 \nabla \alpha_1 - \beta_1 \nabla \alpha_2
\end{pmatrix} =
\begin{pmatrix}
0 & \beta_3 & - \beta_2 \cr
- \beta_3 & 0 & \beta_1 \cr
\beta_2 & - \beta_1 & 0
\end{pmatrix}
A = BA
\]
with respect to the variables $f_1,f_2,f_3$.  Observe that the Jacobian matrix $J$ is the product of the skew-symmetric matrix $B$ and the matrix $A$.  Since $A$ is invertible, the rank of $B$ is $2$ and hence also the rank of $J$ is $2$ for all choices of $[f_1,f_2,f_3] \in \bp$.  It follows that the scheme $\Delta_A$ is reduced of dimension $0$ or empty.  After a general change of coordinates, we reduce to the case in which any two of the equations defining $\Delta_A$ are transverse.  In this case, we prove that there are points of $\Delta_A$ on which $q$ does not vanish.

Any two of the equations defining $\Delta_A$ imply the third, unless one among $\alpha_1,\alpha_2,\alpha_3$ vanishes.  Thus, away from the union of the three lines $\alpha_1 = 0$,  $\alpha_2 = 0$,  $\alpha_3 = 0$, any two linearly independent combinations of the equations defining $\Delta_A$ imply the third.  By our reductions, the intersection of $\Delta_A$ with any line in $\bp$ consists of a scheme of dimension $0$ and length at most $10$, since $\Delta_A$ is defined by equations of degree $10$.  It follows that $\Delta_A$ contains at least $10^2-3 \cdot 10 = 70$ points.
On the other hand, the intersection of $\Delta_A$ with the vanishing set of $q$ consists of at most $40$ points, since the degree of $q$ is $4$ and $\Delta_A$ is defined by equations of degree $10$.  We finally obtain that there are points $l$ of $\Delta_A$ satisfying $q(l) \neq 0$.  Choose vectors $l,m,n \in k^3$ as follows: $l$ lies in $\Delta_A$ and $q(l) = 1$; $m,n$ form a basis of the kernel of $\Fr(l^t) A$.  Observe that $l,m,n$ form a basis of $k^3$ and that, using coordinates $x',y',z'$ with respect to this basis, the form $q$ becomes
\[
q(x',y',z') = x'^4 + q'(y',z'),
\]
where the binary form $q'(y',z')$ satisfies $S(q') = 0$.  Using Remark~\ref{rem:orbite}, we obtain that, after a change of coordinates, the form $q$ is equal to one of the following: $x^4+y^4+z^4$, $x^4+y^3 z$, $x^4 + y^4$, $x^4$, as we wanted to show.

\subsubsection*{Case 2: the matrix $A$ is not invertible}
In this case, there is a non-zero linear combination of the rows of $A$ that vanishes: let $N$ be an invertible $3 \times 3$ matrix such that the first row of $NA$ is the zero row.  Let $M$ be the matrix $\Fr^{-1}(N^t)$ and evaluate the quartic form $q$ at $M \cdot (x,y,z)^t$, to obtain the quartic form with matrix
\[
NAM = \begin{pmatrix}
0 & 0 & 0 \cr
a_1 & a_2 & a_3 \cr
b_1 & b_2 & b_3
\end{pmatrix}.
\]
If $a_1 = b_1 = 0$, then we obtained a binary form in $y,z$ and we conclude using Remark~\ref{rem:orbite}.  Suppose therefore that $a_1,b_1$ are not both zero and, exchanging if necessary the last two rows and the last two columns of this matrix, we reduce to the case in which $b_1$ is non-zero.  After the substitution
\[
(x,y,z) \mapsto
\left( x , y , z-\Fr^{-1} \left( \frac{a_1}{b_1} \right) y \right) ,
\]
we reduce to the case in which $a_1$ vanishes.

Suppose that $a_2$ does not vanish.  After the substitution
\[
(x,y,z) \mapsto \left( x , y - \frac{a_3}{a_2} z , z \right) ,
\]
we reduce to the case in which $a_3$ vanishes.  Finally, after the substitution
\begin{equation} \label{e:sosti}
(x,y,z) \mapsto \left( x-\frac{b_2}{b_1} y-\frac{b_3}{b_1} z , y , z \right) ,
\end{equation}
we reduce to the case in which $b_2$ and $b_3$ also vanish.  After these reductions, we are left with the quartic form $b_1 x z^3 + a_2 y^4$; rescaling and permuting the variables, we obtain the form $x^4 + y^3 z$, as required.

We are still left with the case in which $a_1 = a_2 = 0$ and $b_1 \neq 0$.  Therefore, the quartic $q$ takes the form $z (a_3 y^3 + b_1 x z^2 + b_2 y z^2 + b_3 z^3)$.
Repeating the substitution in~\eqref{e:sosti}, we obtain the quartic form $z (a_3 y^3 + b_1 x z^2)$; rescaling and permuting the variables, we obtain the form $x (x^2 y + z^3)$, as required.
\end{proof}

An immediate consequence of the classification of totally harmonic quartic forms is the following corollary, characterizing smooth plane quartics with non-separable Gauss map.

\begin{Cor}\label{cor:fermat}
Let $C \subset \bp$ be a smooth plane quartic curve over an algebraically closed field $k$.
The following conditions are equivalent:
\begin{enumerate}
    \item \label{it:inff}
    the curve $C$ does not have finitely many inflection lines;
    \item \label{it:gm}
    the fibers of the Gauss map of $C$ have length at least $3$;
    \item \label{it:qh}
    the curve $C$ is totally harmonic;
    \item \label{it:fer}
    the characteristic of the field is $3$ and the curve $C$ is isomorphic to the Fermat curve $x^4+y^4+z^4 = 0$.
\end{enumerate}
\end{Cor}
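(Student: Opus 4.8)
The plan is to show that each of the four conditions singles out exactly the Fermat curve over a field of characteristic $3$, so that they are equivalent. The equivalence \eqref{it:qh}$\Leftrightarrow$\eqref{it:fer} follows from Proposition~\ref{prop:nonsva}: running through the classification of totally harmonic forms, the binary forms $f(y,z)$ define unions of lines through $[1,0,0]$, the form $x^4+y^3z$ has an $E_6$ singularity, and $x(x^2y+z^3)$ is reducible with an $E_7$ singularity, so the only member defining a smooth curve is the Fermat form $x^4+y^4+z^4$ in characteristic $3$ (its partials $x^3,y^3,z^3$ have no common zero). Hence a smooth quartic is totally harmonic if and only if it is a Fermat curve in characteristic $3$.

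I would then check that this Fermat curve $C$ satisfies \eqref{it:inff} and \eqref{it:gm}. Using $4=1$, its partial derivatives are $x^3,y^3,z^3$, so the Gauss map $\gamma\colon[x:y:z]\mapsto[x^3:y^3:z^3]$ is the relative Frobenius; it is purely inseparable of degree $3$, so every fibre has length $3$, giving \eqref{it:gm}. A direct substitution shows that the tangent line at each point meets $C$ with multiplicity $4$ at the point of tangency, so every tangent is an inflection line and $C$ has infinitely many of them, giving \eqref{it:inff}.

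It remains to show that a smooth quartic $C$ that is not a Fermat curve in characteristic $3$ satisfies none of \eqref{it:inff}, \eqref{it:gm}, \eqref{it:qh}. I would use throughout that the inflection lines of a smooth curve are the tangent lines at its flexes, that the flex locus is finite unless it is all of the irreducible curve $C$, that a general tangent is tangent at a single point (so $\gamma$ has separable degree $1$), and the class relation $\deg\gamma\cdot\deg C^\vee=12$. If the characteristic is coprime with $6$, then Lemma~\ref{lem:defU} and its proof give that $\gamma$ is separable, that $\fl(C)$ is finite and equals the set of inflection lines, and that the fibres have length $1$, while \eqref{it:qh} fails by the first paragraph. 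If the characteristic is $3$ and $C$ is not Fermat, then $C$ is not totally harmonic, so $H(q)\neq0$; were $\gamma$ inseparable it would be the relative Frobenius of degree $3$, forcing the cubics $\partial_xq,\partial_yq,\partial_zq$ into the span of $x^3,y^3,z^3$ and hence $q$ into the shape $\sum_{i,j}a_{ij}x_i^3x_j$, which is totally harmonic by \eqref{e:quac3} --- a contradiction; so $\gamma$ is separable, the flexes are finite, and the fibres have length $1$.

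The main obstacle is characteristic $2$. A local computation shows that the slope of the tangent line, as a function of a local parameter $t$, is always a power series in $t^2$, so the Gauss map of every smooth plane curve is inseparable; one therefore cannot detect finiteness of the flexes through separability. I would show that the inseparable degree, a power of $2$ dividing $12$, equals $2$: degree $4$ is impossible because then $C^\vee$ would be a plane cubic whose normalisation has genus $3$, whereas a plane cubic has geometric genus at most $1$. Thus the fibres have length $2<3$ and \eqref{it:gm} fails. For \eqref{it:inff}, if $C$ had infinitely many inflection lines then every point would be a flex, every tangent would meet $C$ with multiplicity at least $3$, and by \eqref{e:eval} the quartic $H(q)$ would vanish along the irreducible dual curve $C^\vee$ of degree $12/2=6$; since a quartic cannot contain a sextic component, $H(q)=0$ and $C$ would be totally harmonic, contradicting Proposition~\ref{prop:nonsva}. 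Hence all three conditions fail in characteristic $2$ as well, which is precisely why the threshold in \eqref{it:gm} is $3$ rather than $2$. Collecting the cases yields the equivalence.
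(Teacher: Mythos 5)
Your strategy (prove \eqref{it:qh}$\Leftrightarrow$\eqref{it:fer} from Proposition~\ref{prop:nonsva} and then show directly that \eqref{it:inff}, \eqref{it:gm}, \eqref{it:qh} hold for the Fermat quartic in characteristic $3$ and fail for every other smooth quartic) is viable, and your characteristic-$2$ analysis is solid. But the decisive characteristic-$3$ step has a genuine gap. You assert that if $\gamma$ is inseparable then, being the relative Frobenius of degree $3$, it forces $\partial_xq,\partial_yq,\partial_zq$ into the span of $x^3,y^3,z^3$. The factorization of $\gamma$ through the relative Frobenius is a statement about the abstract curve $C$ and the extension $k(C^\vee)=k(C)^3$; it tells you only that the restrictions of the partials to $C$, as sections of $\sO_C(3)$, are cubes of sections of some line bundle $N$ on $C$ with $N^{\otimes 3}\simeq\sO_C(3)$, not that they are cubes of linear forms. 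To close this you would need to show $N\simeq\sO_C(1)$ (e.g.\ the three sections of $N$ are independent because $C^\vee$ is not a line, so $h^0(N)\ge 3$, which by Riemann--Roch on the genus-$3$ curve $C$ forces the degree-$4$ bundle $N$ to be $\omega_C=\sO_C(1)$) and then lift from $C$ to $\bp$ using $\deg \partial_x q=3<4=\deg q$. None of this is in your argument, and the identification "relative Frobenius of $C$ = coordinatewise cubing of $\mathbb{P}^2$" that it silently relies on is exactly what has to be proved. A cheaper repair is the paper's route: fibres of length $3$ force every tangent line to have contact order at least $3$, so the irreducible curve $C^\vee$, of degree $12/3=4$, lies inside $V(H(q))$; since in characteristic $3$ the form $H(q)$ is the square of a conic by~\eqref{e:quac3}, its zero locus cannot contain an irreducible quartic unless $H(q)=0$, i.e.\ unless $C$ is totally harmonic.

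Two smaller points. First, your claim that the tangent line at \emph{each} point of the Fermat quartic in characteristic $3$ meets it with multiplicity $4$ is false: at $[1,a,b]$ the restriction of $x^4+y^4+z^4$ to the tangent line is a fourth power of a linear form only when $a^8=b^8$, and the generic contact order is $3$ (there are finitely many hyperflexes). Your conclusion survives because multiplicity at least $3$ is all that is needed, but the "direct substitution" does not show what you say; the paper instead deduces the triple root from $S(F_\ell)=0$ together with Remark~\ref{rem:orbite}. Second, in characteristic $2$ you argue inseparability of $\gamma$ from the slope alone being a function of $t^2$; the Gauss map also involves the intercept $f(t)-tf'(t)$, whose derivative is $-tf''(t)=0$ in characteristic $2$, so the conclusion is correct but the statement as written is incomplete. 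For what it is worth, the paper organizes the proof as a cycle \eqref{it:inff}$\Leftrightarrow$\eqref{it:gm}$\Rightarrow$\eqref{it:qh}$\Rightarrow$\eqref{it:fer}$\Rightarrow$\eqref{it:inff}, with the key global input being that $C^\vee$ has geometric genus $3$ and hence degree at least $4$; your characteristic-$2$ discussion makes explicit a case the paper treats in one sentence, which is a genuine improvement, but the characteristic-$3$ gap must be filled before the proof stands.
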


\begin{proof}
Let $C^\vee \subset \bpd$ denote the image of the Gauss map of $C$.  By~\cite{kaji}*{Corollary~4.4}, the Gauss map is purely inseparable.
It follows that the curves $C$ and $C^\vee$ are birational and we deduce that $C^\vee$ has the same geometric genus as $C$.  In particular, $C^\vee$ is a plane curve of degree at least $4$.

\noindent
\eqref{it:inff}$\Longleftrightarrow$\eqref{it:gm}.
The Gauss map is separable if and only if a general tangent line to $C$ is not an inflection line, and the equivalence of \eqref{it:inff} and \eqref{it:gm} follows.

\noindent
\eqref{it:gm}$\implies$\eqref{it:qh}.
Since the fibers of the Gauss map $\gamma \colon C \to \bpd$ of $C$ have length at least $3$, every point of the dual curve $C^\vee$ corresponds to an inflection line and hence is contained in the vanishing set of the harmonic quartic $H (C)$.

\noindent
\eqref{it:qh}$\implies$\eqref{it:fer}.
The only smooth quartics in the list of Proposition~\ref{prop:nonsva} are projectively equivalent to the Fermat quartic curve in characteristic~$3$, as required.

\noindent
\eqref{it:fer}$\implies$\eqref{it:inff}.
Let $\ell$ be a tangent line to $C$, choose a parameterization of $\ell$ and denote by $F_\ell$ the restriction of the Fermat equation to the line $\ell$ under the chosen parameterization.  Since the harmonic quartic $H (x^4+y^4+z^4)$ vanishes, the equations in~\eqref{e:eval} imply that the invariant $S$ of $F_\ell$ vanishes.  Since $F_\ell$ has a repeated root, Remark~\ref{rem:orbite} shows that $F_\ell$ has at least a triple root and we conclude that $\ell$ is an inflection line for $C$ and we are done.
\end{proof}

A few of the implications of Corollary~\ref{cor:fermat} also follow from~\cite{pa}*{Proposition~3.7}.  In~\cite{fu}*{Theorems~1 and~2} there is a proof that, over fields of characteristic $3$, the only smooth plane quartics with degenerate Gauss map are projectively equivalent to the Fermat quartic.

\section{Configurations of inflection lines of plane quartics} \label{sec:trop}

Starting from here, unless specified otherwise, we assume that the characteristic of the field $k$ is coprime with~$6$.  We analyze the configurations of inflection lines associated to singular plane quartics and show that we can distinguish them from the configurations of inflection lines of general quartics.  We impose no restriction on the singularities of the quartic curve and the case that proves the hardest for us is the case of totally harmonic quartics.

\begin{Def}
Let $C \subset \bp$ be a plane quartic and let $\ell \subset \bp$ be a line not contained in $C$.  Suppose that the intersection $\ell \cap C$ contains a point $p$ with intersection multiplicity at least $3$ and such that $p$ is a smooth point of $C$.  We call $\ell$ a {\emph{simple inflection line}} if $p$ has multiplicity $3$; we call $\ell$ a {\emph{hyperinflection line}} if $p$ has multiplicity $4$.
\end{Def}

Remark~\ref{rem:flf} shows that, for every plane quartic $C$, simple inflection lines and hyperinflection lines correspond to points in the inflection scheme $\fl(C)$.  For an isolated point $\ell$ of the inflection scheme $\fl(C)$, we call {\emph{multiplicity of $\ell$}} in $\fl(C)$ the degree $\mult_\ell(\fl(C))$ of the irreducible component of $\fl(C)$ containing $\ell$. In the next lemma, we prove that, for general quartics, simple inflection lines and hyperinflection lines are characterized by their multiplicity in the inflection scheme.

\begin{Lem} \label{lem:singU}
Let $k$ be a field of characteristic coprime with~$6$.  Let $C \subset \bp$ be a
plane quartic curve and let $\ell \subset \bp$ be a line contained in $\fl(C)$.  Assume that $\ell$ is an isolated point of the inflection scheme $\fl(C)$.
\begin{enumerate}
\item
If the line $\ell$ is a simple inflection line, then
$\mult_\ell(\fl(C))$ equals~$1$.
\item
If the line $\ell$ is a hyperinflection line, then the harmonic quartic $H(C)$ is smooth at $\ell$, the harmonic sextic $K(C)$ is singular at $\ell$ and $\mult_\ell(\fl(C))$ equals~$2$.
\item
If there is a singular point of $C$ along $\ell$, then $\mult_\ell(\fl(C))$ is at least~$3$.
\end{enumerate}
Moreover, if $\ell$ is a hyperinflection line, then the tangent space to $H(C)$ at $\ell$ corresponds to the pencil of lines through the point $p$.
\end{Lem}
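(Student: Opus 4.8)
The plan is to reduce all three parts to a single local intersection-multiplicity computation in the dual plane $\bpd$, using the evaluation identities \eqref{e:eval} to express $H(C)$ and $K(C)$ near $\ell$ through the invariants $S,T$ of \eqref{e:inva} applied to the restriction of $q$ to the varying line. First I would fix coordinates on $\bp$ so that the point $p\in\ell\cap C$ of multiplicity $\ge 3$ is $[1,0,0]$ and $\ell$ is the line $y=0$; in $\bpd$ this places $\ell$ at $[0,1,0]$, and in the affine chart $v=1$ a nearby line $[s,1,t]$ has equation $sx+y+tz=0$, parameterized by $(x,z)\mapsto[x:-sx-tz:z]$. The key bookkeeping fact is that this line contains $p$ exactly when $s=0$, so the pencil of lines through $p$ is the coordinate line $\{s=0\}$. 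By \eqref{e:eval}, up to an invertible regular factor, $H(C)$ and $K(C)$ agree in these coordinates with $S(Q_{s,t})$ and $T(Q_{s,t})$, where $Q_{s,t}(x,z)=q(x,-sx-tz,z)$; hence the local structure of $\fl(C)$ at $\ell$, and in particular $\mult_\ell(\fl(C))$, is governed by the simultaneous vanishing of these two invariants as functions of $(s,t)$.

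The engine is the polarization $S(f+g)=S(f)+\bilq{f}{g}+S(g)$. Writing $Q_{s,t}=Q_{0,0}+sF_s+tF_t+O(2)$ with $Q_{0,0}=q|_\ell$, $F_s=-x\,q_y(x,0,z)$ and $F_t=-z\,q_y(x,0,z)$, the linear part of $S$ at $\ell$ equals $\bilq{Q_{0,0}}{F_s}\,ds+\bilq{Q_{0,0}}{F_t}\,dt$, while the linear part of $T$ is read off from its gradient at $Q_{0,0}$. Normalizing $q$ to the shape of \eqref{e:qforma} and imposing that the contact at $p$ has order $\ge 3$ (which kills the $z^2$-coefficient of $q'_2$, so that $q|_\ell=z^3(\delta x+Ez)$), these reduce to $dS=(3b\delta-12E)\,ds+3\delta\,dt$ and $dT=27\delta^2\,ds$, where $b$ is the $yz$-coefficient of $q'_2$. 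A simple inflection line is the case $\delta\neq0$ and a hyperinflection line the case $\delta=0,\ E\neq0$. In the simple case $dS$ and $dT$ are linearly independent (the $dt$-coefficient $3\delta$ of $dS$ is nonzero by $\car k\neq 3$, whereas $dT$ is a multiple of $ds$), so the two branches meet transversally and $\mult_\ell(\fl(C))=1$. In the hyperinflection case $dS=-12E\,ds\neq0$, so $H(C)$ is smooth at $\ell$ with tangent line $\{s=0\}$, which is precisely the pencil of lines through $p$ (this is the final ``Moreover'' assertion), while $dT=0$, so $K(C)$ is singular at $\ell$.

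To pin down the length-$2$ claim, and the lower bound in the singular case, I would restrict to the pencil $\{s=0\}$, where $Q_{0,t}=q(x,-tz,z)$ has fully explicit coefficients and $T$ collapses. In the hyperinflection case one gets $T(Q_{0,t})=-27E\,t^2+O(t^3)$; since $\{S=0\}$ is smooth and tangent to $\{s=0\}$ to second order, it is parameterized by $s(t)=O(t^2)$, and substituting $s(t)$ leaves the $t^2$-coefficient undisturbed, so $\mult_\ell(\fl(C))=\mathrm{ord}_t\,T(s(t),t)=2$. For part (3), a singular point $p\in\ell$ removes the $x^3y$-term; because $\ell$ is isolated in $\fl(C)$ it is not a component of $C$, and the Bézout count on $\ell$ then forces the triple contact to sit at $p$. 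In this setting $dT=0$ always, and on the pencil one computes $S(Q_{0,t})=t^2(at-b)^2$ and $T(Q_{0,t})=-2t^3(at-b)^3$. If $dS\neq0$ the same substitution argument yields $\mathrm{ord}_t\,T(s(t),t)\ge3$; if $dS=0$ then both $S$ and $T$ lie in $\fm^2$, whence $\mult_\ell(\fl(C))\ge 2\cdot 2\ge 3$.

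The step I expect to be the main obstacle is the control of higher-order corrections in the last two cases: reading $\mult_\ell(\fl(C))$ off the naive restriction to $\{s=0\}$ is only legitimate because $\{S=0\}$ is smooth and meets $\{s=0\}$ to high order, so I must verify that the $s$-dependent terms of $T$ — whose low-order part I control via the gradient and the Hessian of $T$ at $q|_\ell$ — contribute strictly above the order already set by the pure $t$-expansion. Once this is checked, everything else is a finite polarization computation with \eqref{e:inva} and the bilinear form $\bilq{-}{-}$, and the hypothesis that $\car k$ is coprime with $6$ enters only through the nonvanishing of the coefficients $3\delta$, $-12E$ and $-27E$.
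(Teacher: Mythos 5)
Your overall strategy --- localizing at the dual point $[\ell]$, identifying $H(C)$ and $K(C)$ up to units with $S(Q_{s,t})$ and $T(Q_{s,t})$ via \eqref{e:eval}, and reading $\mult_\ell(\fl(C))$ off the differentials together with the restriction to the pencil $\{s=0\}$ --- is essentially the computation the paper performs; the paper writes the congruences for $H(q)$ and $K(q)$ directly in the coordinates $u,v,w$ rather than passing through the chart $(s,t)$ and the polarization of $S$, but the content is the same. Your first-order coefficients $dS=(3b\delta-12E)\,ds+3\delta\,dt$ and $dT=27\delta^2\,ds$ check out, as do the restrictions $T(Q_{0,t})=-27Et^2+O(t^3)$, $S(Q_{0,t})=t^2(at-b)^2$ and $T(Q_{0,t})=-2t^3(at-b)^3$. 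The ``main obstacle'' you flag is genuinely under control: since $T$ has vanishing linear part in these cases, writing $T(s,t)=T(0,t)+s\,R(s,t)$ forces $R\in\fm$, so with $s(t)=O(t^2)$ the correction $s(t)R(s(t),t)$ has order at least $3$ in $t$ and cannot disturb the relevant low-order coefficient. So parts (1), (2) and the ``Moreover'' clause are fine.

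The one genuine error is the sentence ``because $\ell$ is isolated in $\fl(C)$ it is not a component of $C$.'' This is false: take $C=\ell\cup C'$ with $C'$ a smooth cubic meeting $\ell$ transversally. Then $C$ is reduced with only double points, so $\fl(C)$ is finite by Lemma~\ref{lem:defU}; hence $\ell$ is an isolated point of $\fl(C)$, it is a component of $C$, and the nodes of $C$ lie on $\ell$, so this configuration falls squarely under part (3). Your B\'ezout argument locating the triple contact at the singular point requires $\ell\not\subset C$, so as written part (3) has a missing case. The omission is easily repaired inside your own framework: if $\ell\subset C$ then $Q_{0,0}=q|_\ell=0$, so every coefficient of $Q_{s,t}$ lies in $(s,t)$, whence $S\in\fm^2$ and $T\in\fm^3$ and the local multiplicity is at least $6$ --- that is, this case lands in (and strengthens) your ``$dS=0$'' branch. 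But the exclusion as stated is not a consequence of isolatedness and must be replaced by this case division; the paper handles it by allowing $q_1=0$ in its normal form and choosing $p$ to be a singular point of $C$ on $\ell$.
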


\begin{proof}
If the line $\ell$ is not contained in $C$, then let $p$ be the unique point of $\ell \cap C$ with intersection multiplicity at least $3$.  If the line $\ell$ is contained in $C$, then let $p$ be a singular point of $C$ on the line $\ell$.
Choose coordinates $x,y,z$ in $\bp$ so that the line $\ell$ is the line with equation $x=0$ and the point $p$ is the point $[0,0,1]$.  We write an equation of $C$ as
\[
q (x,y,z) = y^3 q_1(y,z) + x q_3(x,y,z) ,
\]
where $q_1$ and $q_3$ are forms of respective degrees $1$ and $3$.  Let $\alpha$ be the coefficient of the monomial $z^3$ in $q_3$; this coefficient vanishes if and only if the curve $C$ is singular at the point $[0,0,1]$.  Moreover, we also assume that
\begin{itemize}
    \item $q_1(y,z) = z$ if $\ell \cap C$ has intersection multiplicity exactly $3$ at $[0,0,1]$;
    \item $q_1(y,z) = y$ if $\ell \cap C$ has intersection multiplicity exactly $4$ at $[0,0,1]$;
    \item $q_1(y,z) = 0$ if $\ell$ is contained in $C$.
\end{itemize}

With our reductions, the harmonic quartic $H(q)$ and the harmonic sextic $K(q)$ vanish at the point $[1,0,0] \in \bpd$.  We compute the expansion of $H$ to first order near the point $[1,0,0]$ and the expansion of $K$ to at most second order near the same point.  By the definition of $H(q)$, we find
\[
H(q)(u,v,w) = u^4 S \left( y^3 q_1(y,z) + \left( - \frac{v}{u} y - \frac{w}{u} z \right) q_3 \left( - \frac{v}{u} y - \frac{w}{u} z , y , z \right) \right) .
\]
Using the definition of $S$, we obtain the congruences
\[
\begin{array}{l@{\hspace{30pt}}ll}
{\textrm{if }} q_1(y,z) = z, & {\textrm{ then }}
H (q) \equiv 3 \alpha u^3 v & \mod{ (v,w)^2 + (w)} ; \\[5pt]
{\textrm{if }} q_1(y,z) = y, & {\textrm{ then }}
H (q) \equiv -12 \alpha u^3 w & \mod{ (v,w)^2} .
\end{array}
\]
Analogously, we compute $K(q)$ and obtain the congruences
\[
\begin{array}{l@{\hspace{30pt}}ll}
{\textrm{if }} q_1(y,z) = z, & {\textrm{ then }}
K (q) \equiv 27 \alpha u^5 w & \mod{ (v,w)^2} ; \\[5pt]
{\textrm{if }} q_1(y,z) = y, & {\textrm{ then }}
K (q) \equiv -27 \alpha u^4 v^2
& \mod{ (v,w)^3 + (vw,w^2)} ; \\[5pt]
{\textrm{if }} q_1(y,z) = 0, & {\textrm{ then }}
K (q) \equiv 0 & \mod{ (v,w)^2} .
\end{array}
\]
Suppose that the curve $C$ is smooth at $p$.  This implies that the coefficient $\alpha$ does not vanish, that $\ell$ is not contained in $C$ by our choice of $p$, and that $\ell$ is either a simple inflection line or a hyperinflection line.  If $\ell$ is a simple inflection line, then the curves $H(C)$ and $K(C)$ are transverse at $[1,0,0]$ and hence the multiplicity of $\ell$ in $\fl(C)$ is $1$.  If $\ell$ is a hyperinflection line, then the curve $H(C)$ is smooth at $[1,0,0]$, the curve $K(C)$ has a double point and the tangent line to $H(C)$ at $[1,0,0]$ is not in the tangent cone to $K(C)$ at $[1,0,0]$.  In this case, the multiplicity of $\ell$ in $\fl(C)$ is $2$.  Observe that the tangent line to $H(C)$ at $\ell$ is the line with equation $w=0$ which corresponds to the pencil of lines through the point $p$.

Suppose that the curve $C$ is singular at the point $p$.  Thus, $\alpha$ vanishes and since the intersection multiplicity of $\ell$ and $C$ is at least $3$ at $p$, it follows that $\ell$ is in the tangent cone to $C$ at $p$.  By the computation of the harmonic sextic, the curve $K(C)$ has a point of multiplicity at least $3$ at $[1,0,0]$, as required.
\end{proof}

For fields of characteristic~0, \cite{PT1}*{Proposition~2.10} gives a more refined computation of the multiplicities than the one of Lemma~\ref{lem:singU}.

We give two examples showing that there is no direct link between the smoothness of a plane quartic $C$ and the smoothness of its harmonic quartic $H(C)$.

\begin{Exa} \label{ex:smsi}
{\emph{Smooth $C$ and singular $H(C)$.}}
The quartic in $\bp$ with equation $x^4 + y^4 + y z^3 = 0$ is smooth if the characteristic of $k$ is coprime with~$6$.  Its harmonic quartic $12 w (w^3 - v^3) = 0$ is the union of $4$ concurrent lines and in particular it is singular.
\end{Exa}

\begin{Exa} \label{ex:sism}
{\emph{Singular $C$ and smooth $H(C)$.}}
The quartic in $\bp$ with equation $x^2 y z + x y^3 + x z^3 + y^4 = 0$ has a node at $[1,0,0]$; its harmonic quartic $-12 u^3 w + 9 u^2 v w + 3 u v^3 + 3 u w^3 + v^2 w^2 = 0$ is smooth if the characteristic of $k$ is coprime with~$6$.
\end{Exa}

\begin{Rem}\label{rem:lirid}
In the hypotheses of Lemma~\ref{lem:singU}, if the curve $C$ is smooth, then the harmonic quartic $H(C)$ is reduced.  Indeed, every point of $\fl(C)$ is either a simple or a hyperinflection line and therefore corresponds to a smooth point of $H(C)$.  Since every irreducible component of $H(C)$ has at least one point in common with $K(C)$, we deduce that every irreducible component of $H(C)$ has smooth points, that is, $H(C)$ is reduced.
\end{Rem}

\begin{Prop} \label{prop:otto}
Let $k$ be a field of arbitrary characteristic, let $C \subset \bp$ be a smooth plane quartic and let $p$ be a point of $\bpd$.  The curve $C$ admits at most $8$ inflection lines through $p$, counted with multiplicity.
\end{Prop}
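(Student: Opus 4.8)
The plan is to translate the statement into a problem about a plane curve in $\bpd$ and then bound it by a degree count. The inflection lines of $C$ through $p$ form the set $\fl(C)\cap\Lambda$, where $\Lambda\subset\bpd$ is the line parametrising the pencil of lines through $p$. I would work throughout with the assumption that the characteristic of $k$ is different from $3$, so that by Remark~\ref{rem:ptitripli} a line $\ell$ is an inflection line of $C$ precisely when $S(q|_\ell)=T(q|_\ell)=0$, i.e. exactly when $\ell$ lies on both the harmonic quartic $H(C)$ and the harmonic sextic $K(C)$. The totally harmonic case (the Fermat curve in characteristic $3$ of Corollary~\ref{cor:fermat}), where $H(C)$ vanishes identically and $\fl(C)$ is no longer finite, I would dispatch separately at the end.

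The core of the argument is the following. Restricting the harmonic quartic $H(C)$ to $\Lambda$ produces a binary quartic form $H(C)|_\Lambda$ of degree $4$, and by Equation~\eqref{e:eval} its zeros are exactly the lines $\ell$ of the pencil with $S(q|_\ell)=0$; these contain every inflection line of $C$ through $p$. Hence, \emph{provided} $H(C)|_\Lambda\not\equiv 0$, at most four \emph{distinct} inflection lines pass through $p$. Since $C$ is smooth, no such line contains a singular point of $C$ and $C$ contains no line, so by Lemma~\ref{lem:singU} each inflection line $\ell$ through $p$ is either a simple inflection line, with $\mult_\ell(\fl(C))=1$, or a hyperinflection line, with $\mult_\ell(\fl(C))=2$. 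Summing over the at most four points gives $\sum_\ell\mult_\ell(\fl(C))\le 4\cdot 2=8$, which is the desired bound.

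The step I expect to be the main obstacle is the degenerate situation in which $H(C)$ vanishes identically on $\Lambda$, so that $\Lambda$ is one of the (at most four) line components of the quartic curve $H(C)\subset\bpd$; this genuinely occurs, for instance for the smooth quartic of Example~\ref{ex:smsi}, where one finds strictly more than four distinct inflection lines through a suitable $p$. Here I would argue with the sextic instead. Because $C$ is smooth and not totally harmonic, $\fl(C)$ is finite, so $H(C)$ and $K(C)$ share no common component and $K(C)|_\Lambda$ is a nonzero binary sextic of degree $6$. By Lemma~\ref{lem:singU} the sextic $K(C)$ has a double point at each hyperinflection line and passes through each simple inflection line; consequently, for every inflection line $\ell$ lying on $\Lambda$ the intersection multiplicity of $\Lambda$ with $K(C)$ at $\ell$ is at least $\mult_\ell(\fl(C))$. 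Summing these local contributions yields $\sum_\ell\mult_\ell(\fl(C))\le\deg\bigl(K(C)|_\Lambda\bigr)=6\le 8$, so the bound again holds (indeed with room to spare).

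It remains to address the low-characteristic cases. Characteristic $2$ is covered verbatim by the above, since Lemma~\ref{lem:z3}, and hence the identification of inflection lines through the vanishing of $S$ and $T$, is valid away from the prime $3$. In characteristic $3$, if $C$ is not totally harmonic the same dichotomy applies, whereas if $C$ is totally harmonic then $C$ is projectively the Fermat quartic by Corollary~\ref{cor:fermat}; in that case I would bound the number of genuine inflection lines (lines of contact order at least $3$) through $p$ by a direct computation on the Fermat equation, using its large automorphism group to reduce to finitely many positions of $p$. In summary, the whole proposition hinges on the interplay between the degree-$4$ estimate coming from $H(C)$ and the refined degree-$6$ estimate coming from the singularities of $K(C)$, together with the separate treatment forced by the vanishing of $H(C)$ in the totally harmonic locus.
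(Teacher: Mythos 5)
Your route is genuinely different from the paper's. The paper projects $C$ away from $p$: since the only smooth curves all of whose tangent lines pass through a fixed point have degree at most $2$ (\cite{Har}*{Theorem~IV.3.9}), the projection $\pi \colon C \to \mathbb{P}^1_k$ is finite and separable of degree at most $4$; it is ramified at every inflection line through $p$, and Riemann--Hurwitz bounds the ramification divisor by $4+2\deg\pi\le 12$. A simple (resp.\ hyper-) inflection line contributes at least $2$ (resp.\ $3$) to that divisor while counting $1$ (resp.\ $2$) towards the statement, and $2f+3h\le 12$ forces $f+2h\le 8$, uniformly in the characteristic. Your argument --- intersecting the dual line $\Lambda$ with $H(C)$, and with $K(C)$ in the degenerate case $\Lambda\subset H(C)$ --- is correct in characteristic coprime with~$6$, which is the only setting in which the paper ever invokes the proposition, and it has the merit of exhibiting the extremal shape of Remark~\ref{rem:hrid} directly. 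Your degenerate case is also right for the right reason: the inequality $\mult_\ell(\fl(C))\le$ (intersection multiplicity of $\Lambda$ and $K(C)$ at $\ell$) cannot come from a containment of schemes --- the containment $\Lambda\cap K(C)\subseteq H(C)\cap K(C)$ gives the \emph{opposite} inequality --- but it does follow from Lemma~\ref{lem:singU}, since $\mult_\ell(\fl(C))\in\{1,2\}$ and $K(C)$ has a point of multiplicity at least $\mult_\ell(\fl(C))$ at $\ell$.

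The gap is your claim that characteristic $2$ is ``covered verbatim''. What you actually need there is not Lemma~\ref{lem:z3} but Lemma~\ref{lem:singU}, and that lemma is stated and proved only for characteristic coprime with~$6$: its local expansions at an inflection line have leading coefficients $3\alpha$, $-12\alpha$, $27\alpha$, $-27\alpha$, and modulo $2$ the expansion $H(q)\equiv -12\alpha u^3w$ at a hyperinflection line vanishes. So in characteristic $2$ the harmonic quartic $H(C)$ is singular at a hyperinflection line while $K(C)$ still has a double point there, whence $\mult_\ell(\fl(C))$ is at least $4$ rather than $2$, and the count ``at most $4$ distinct lines, each of multiplicity at most $2$'' breaks down. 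Your characteristic-$3$ case is only a promissory note (and there $\fl(C)$ is not even defined by $H$ and $K$, so the meaning of ``counted with multiplicity'' must first be renegotiated). If you want all characteristics, the paper's projection argument is the cleaner tool; if you are content with characteristic coprime with~$6$, your proof stands once the two low-characteristic sentences are deleted.
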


\begin{proof}
The only smooth curves in projective space admitting a point contained in every tangent line have degree at most~$2$ (see~\cite{Har}*{Theorem~IV.3.9}).  Thus, the projection of $C$ away from $p$ induces a finite separable morphism $\pi \colon C \to \mathbb{P}^1_k$ of degree at most~$4$.  Since $\pi$ is ramified at every inflection line of $C$ through $p$, we conclude using the Riemann-Hurwitz formula.
\end{proof}

\begin{Rem} \label{rem:hrid}
Let $C$ be a smooth plane quartic admitting exactly $8$ inflection lines through a point $p$, counted with multiplicity.  It follows that $C$ is projectively equivalent to $x^4 = y z (y-z) (y-\lambda z)$, where $\lambda \in k \setminus \{ 0,1 \}$ is a constant and $p$ is the point $[1,0,0]$.  Moreover, the inflection lines through $p$ are $4$ lines each of multiplicity $2$.
\end{Rem}

\begin{Prop}\label{prop:liscsing}
Let $k$ be a field of characteristic coprime with~$6$.  Let $C \subset \bp$ be a singular plane quartic not containing a line with multiplicity at least three.  Every configuration of inflection lines associated to $C$ is not the configuration of inflection lines of a smooth quartic.
\end{Prop}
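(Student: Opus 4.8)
The plan is to distinguish smooth from singular quartics by a single numerical invariant of their configurations: the largest multiplicity of a point. First I would record that a smooth quartic produces only small multiplicities. Since the characteristic is coprime with~$6$, Corollary~\ref{cor:fermat} shows that a smooth quartic $D$ has finitely many inflection lines, so $\F$ is defined at $D$ (Lemma~\ref{lem:defU}) and the unique configuration associated to $D$ equals $\fl(D)=H(D)\cap K(D)$. As $D$ is smooth, each of its inflection lines is a simple or a hyperinflection line, so parts~(1) and~(2) of Lemma~\ref{lem:singU} give that every point of $\fl(D)$ has multiplicity $1$ or $2$. It therefore suffices to prove that every configuration associated to the singular quartic $C$ carries a point of multiplicity at least~$3$: as a point of $\Hilb$ it then cannot coincide with the configuration of any smooth quartic.

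To produce such a point I would work locally at a singularity. Fix a singular point $p$ of $C$ and a line $\ell_0$ in the tangent cone of $C$ at $p$; then $\ell_0\cap C$ has intersection multiplicity at least~$3$ at $p$, so $\ell_0$ belongs to every configuration associated to $C$ (Remark~\ref{rem:flf}). The local congruences computed in the proof of Lemma~\ref{lem:singU}~(3) show, independently of any isolation hypothesis, that $K(C)$ has multiplicity at least~$3$ at $\ell_0$ while $H(C)$ vanishes there. If $C$ is reduced with only double points, then $\F$ is defined at $C$ (Lemma~\ref{lem:defU}), the configuration is $\fl(C)$ itself, and Lemma~\ref{lem:singU}~(3) applies directly at $\ell_0$. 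More generally, whenever $\ell_0$ is an \emph{isolated} point of $\fl(C)=H(C)\cap K(C)$, I would invoke conservation of length in the flat family $\F(C_t)$ defining the configuration $\fl=\lim_{t\to 0}\F(C_t)$: near an isolated intersection point no length can escape, so the multiplicity of $\fl$ at $\ell_0$ equals the local intersection number
\[
\bigl(H(C)\cdot K(C)\bigr)_{\ell_0}\ \geq\ \mult_{\ell_0}H(C)\cdot\mult_{\ell_0}K(C)\ \geq\ 1\cdot 3\ =\ 3 .
\]
This disposes, uniformly over all configurations, of every singularity whose tangent cone lines are isolated in $\fl(C)$.

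The remaining cases are exactly those in which no tangent cone line is isolated, that is, where a whole pencil $p^\vee\subset\bpd$ of lines through a singular point sits inside $\fl(C)$; by Table~\ref{tab:sindim} these are the non-reduced quartics and the reduced quartics with a point of multiplicity $3$ or~$4$ (no triple line being allowed by hypothesis). When $C$ contains a double conic $c^2$ I would bypass the multiplicity count with a B\'ezout argument: by Remark~\ref{rem:flf} every line of the configuration is tangent to $c$, so the reduced support of the configuration lies on the dual conic $c^\vee$. If this configuration were $\F(D)$ for a smooth $D$, all $24$ inflection lines of $D$ would lie on the conic $c^\vee$, forcing $c^\vee$ to be a common component of $H(D)$ and $K(D)$ and hence $\fl(D)$ to be positive dimensional, contradicting the finiteness guaranteed by Corollary~\ref{cor:fermat}.

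I expect the genuine difficulty to lie in the truly non-isolated reduced cases, where a pencil $p^\vee$ through a point of multiplicity $3$ or $4$ is a common component of $H(C)$ and $K(C)$: here the local intersection number is infinite and the conservation argument above collapses. The plan is to analyse the local structure of the contravariants transverse to the shared pencil, expanding $H(C)$ and $K(C)$ to higher order in the normal direction to $p^\vee$ (in the spirit of the congruences in Lemma~\ref{lem:singU}), so as to show that the flat limit still concentrates length at least~$3$ at a single tangent cone direction on $p^\vee$. Equivalently, one would show that the configuration carries length exceeding~$8$ on the pencil $p^\vee$, which Proposition~\ref{prop:otto} forbids for a smooth quartic. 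Either formulation again rules out equality with the configuration of a smooth quartic, and this transverse analysis of the contravariants along the shared pencil is the step I anticipate as the main obstacle.
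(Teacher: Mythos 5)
Your proposal correctly disposes of the easy cases (the multiplicity bound for smooth quartics, the reduced quartics with only double points via Lemma~\ref{lem:singU}(3), and the irreducible double conic via a neat B\'ezout argument on the dual conic that is a legitimate alternative to the paper's computation $H((xy-z^2)^2)=(4uv-w^2)^2$). But there is a genuine gap, and you flag it yourself: the quartics with a point $p$ of multiplicity $3$ or $4$ that are not excluded by the hypothesis (e.g.\ a line plus a cuspidal cubic with the cusp on the line, three or four concurrent lines plus residual components, etc.). For these, the pencil $p^\vee$ is a common component of $H(C)$ and $K(C)$, your local intersection-number argument collapses, and what you offer instead is only a plan (``analyse the contravariants transverse to the shared pencil'', or ``show the configuration carries length exceeding $8$ on $p^\vee$''). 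Neither version is carried out, and the second is not obviously true: there is no a priori control on how the flat limit distributes its length between $p^\vee$ and the rest of $\fl(C)$, so Proposition~\ref{prop:otto} cannot be invoked without substantial extra work. Since this is precisely the hardest case of the statement, the proof is incomplete.

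The paper closes this case by a global mechanism you did not find. If $\fl(D)$ is the configuration of a smooth quartic $D$ and is also associated to $C$, then $H(q_C)$ vanishes on the degree-$24$ complete intersection $\fl(D)=V(H(q_D),K(q_D))$; being a quartic in that ideal, $H(q_C)$ must be a scalar multiple of $H(q_D)$. Now if $C$ has a point of multiplicity at least $3$ at $[1,0,0]$, then $q_C=xq_3(y,z)+q_4(y,z)$ and $H(q_C)$ is visibly divisible by $u^2$, hence non-reduced --- contradicting the reducedness of $H(D)$ (Remark~\ref{rem:lirid}) --- unless $C$ is totally harmonic, in which case Proposition~\ref{prop:nonsva} forces $C$ to be four concurrent lines and Proposition~\ref{prop:otto} finishes the argument (this last step is the one place where your idea of ``too many concurrent lines'' does apply, because here \emph{every} line of the configuration passes through $p$). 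You would need to supply something equivalent to this proportionality-plus-reducedness argument, or genuinely execute the transverse expansion you sketch, before the proof can be considered complete.
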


\begin{proof}
Let $D \subset \bp$ be a smooth quartic curve and suppose by contradiction that $\fl(D)$ is a configuration of inflection lines associated to $C$.  We use the following three properties to obtain a contradiction:
\begin{itemize}
\item
the inflection scheme $\fl(D)$ is a complete intersection of dimension zero (Lemma~\ref{lem:defU}) and with irreducible components of multiplicity at most $2$ (Lemma~\ref{lem:singU});
\item
the quartic $H(D)$ is reduced (Remark~\ref{rem:lirid});
\item
the configuration $\fl(D)$ does not consist of concurrent lines (Proposition~\ref{prop:otto}).
\end{itemize}

We begin assuming that $C$ is reduced with singular points of multiplicity~$2$.  In this case, by Lemma~\ref{lem:defU} the inflection scheme $\fl(C)$ has dimension zero and is the unique configuration of inflection lines associated to $C$.  We apply Lemma~\ref{lem:singU} to a line $\ell$ in the tangent cone to a singular point of $C$ to deduce that $\mult_\ell(\fl(C))$ is at least~$3$.  Thus, $\fl(C)$ and $\fl(D)$ are different.

We are left with the cases in which either $C$ is a double conic or $C$ has a singular point of multiplicity at least~$3$.  Denote by $q_C$ and $q_D$ ternary quartic forms vanishing on $C$ and $D$ respectively.  Since $\fl(D)$ is the complete intersection with ideal generated by $H(q_D),K(q_D)$ and $H(q_C)$ is in the ideal, it follows that $H(q_C)$ is a multiple of $H(q_D)$.  In particular, if $C$ is not totally harmonic, then $H(C)$ and $H(D)$ coincide and hence $H(C)$ is reduced.

Suppose that $C$ is an irreducible conic with multiplicity two.  Choose coordinates on $\bp$ so that $q_C$ is the polynomial $(xy-z^2)^2$.  Computing $H(q_C)$ we obtain $(4uv-w^2)^2$, contradicting the reducedness of $H(C)$.

In the remaining cases, $C$ has a singular point $p$ of multiplicity at least~$3$.  Choose coordinates on $\bp$ so that $p$ is the point $[1,0,0]$.  Thus, there are binary forms $q_3,q_4$ of degrees $3$ and $4$ respectively such that $q_C$ equals $x q_3(y,z) + q_4(y,z)$.  Using the definition of $H$ it is clear that $H(q_C)$ has degree at most $2$ as a polynomial in the variables $v,w$.  Therefore, the harmonic quartic $H(q_C)$ is divisible by $u^2$.  If $C$ is not totally harmonic, then we are done, since $H(C)$ is non-reduced in this case.  Otherwise, $C$ is totally harmonic and using Proposition~\ref{prop:nonsva} and the assumption that $C$ does not contain a line with multiplicity at least~$3$, it follows that $C$ consists of~$4$ distinct lines through the point $p$.  Every line in $\fl(D)$ must therefore contain the point $p$ and hence the configurations of inflection lines associated to $C$ consist of~$24$ lines through the point $p$, counted with multiplicity.  This is impossible by Proposition~\ref{prop:otto}.
\end{proof}

In the next proposition, we obtain properties of the configuration of inflection lines in the cases missing from the statement of Proposition~\ref{prop:liscsing}, namely, plane quartics containing a line with multiplicity at least~$3$.

Let $\wtD$ be the closure in $\Pq \times \Hilb \times \Pqd$ of the locus
\[
\wtD = \bigl\{(C, \fl(C) , H(C)) : C \text{ is a general plane quartic} \bigr\}.
\]
Let $(C,\fl)$ be a pair in $\wtP$.  We say that a quartic $D \subset \bpd$ is {\emph{{\nom} for $(C,\fl)$}} if the triple $(C,\fl,D)$ is in $\wtD$.  It follows from the definitions that if $D$ is derived for $(C,\fl)$, then $D$ contains $\fl$.

\begin{Lem}\label{lem:syslin}
Let $(C,\fl)$ be a pair in $\wtP$ and let $\mathscr{L} \subset \Pqd$ denote the linear system of quartics containing $\fl$.  There is at least one {\nom} quartic in $\mathscr{L}$.  If $\mathscr{L}$ consists of more than one elements, then every quartic in $\mathscr{L}$ is not integral.
\end{Lem}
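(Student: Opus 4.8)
The two claims are logically independent, so I would handle them one at a time. For the existence of a {\nom} quartic in $\mathscr{L}$, the plan is to exploit properness of the forgetful projection
\[
\pi \colon \wtD \longrightarrow \wtP , \qquad (C,\fl,D) \longmapsto (C,\fl).
\]
Since $\wtD$ is closed inside the product $\Pq \times \Hilb \times \Pqd$ of projective schemes, it is proper over $k$, hence $\pi$ is proper and its image is closed. By construction $\wtD$ is the closure of the locus of triples $(C,\fl(C),H(C))$ with $C$ a general plane quartic, while $\wtP$ is the closure of the locus of the corresponding pairs $(C,\fl(C))$; thus the image of $\pi$ is closed and contains a dense subset of $\wtP$, so it is all of $\wtP$. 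In particular $\pi$ is surjective, the fibre over the given pair $(C,\fl)$ is non-empty, and any $D$ in it satisfies $(C,\fl,D) \in \wtD$, i.e.\ $D$ is {\nom} for $(C,\fl)$. As recalled just before the statement, a {\nom} quartic contains $\fl$, so $D$ lies in $\mathscr{L}$, which settles the first assertion.

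For the second assertion I would argue by contradiction using B\'ezout. Suppose $\mathscr{L}$ has more than one element and, seeking a contradiction, that it contains an integral quartic $D$; choose a second member $D' \neq D$ of $\mathscr{L}$. Both $D$ and $D'$ contain the length-$24$ subscheme $\fl$, so $\mathscr{I}_D + \mathscr{I}_{D'} \subseteq \mathscr{I}_\fl$ and $\fl$ is a closed subscheme of the scheme-theoretic intersection $D \cap D'$. If $D$ and $D'$ shared no common component, then $D \cap D'$ would be zero-dimensional of length $4 \cdot 4 = 16$ by B\'ezout, which cannot contain a subscheme of length $24$; hence $D$ and $D'$ do share a common component. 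Since $D$ is integral, its unique one-dimensional component is $D$ itself, so $D \subseteq D'$; comparing degrees then forces the defining forms to be proportional, whence $D = D'$, a contradiction. Therefore, once $\mathscr{L}$ contains at least two members, none of them can be integral.

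The part demanding the most care will be the surjectivity of $\pi$: one has to pin down that $\wtD$ and $\wtP$ are literally the closures of the graphs over general quartics, so that properness upgrades the dense image to a genuine surjection and thereby guarantees a {\nom} quartic over every point of $\wtP$. The B\'ezout step is otherwise routine, the only subtlety being to record $\fl$ inside the scheme-theoretic intersection $D \cap D'$ rather than merely the set-theoretic one, so that the comparison of lengths $24 > 16$ is legitimate.
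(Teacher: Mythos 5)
Your proposal is correct and follows essentially the same route as the paper: the existence of a {\nom} quartic in $\mathscr{L}$ comes from the properness of $\wtD$ (you merely spell out the surjectivity of the projection $\wtD \to \wtP$, which the paper leaves implicit), and the second assertion is the same B\'ezout argument, with the useful extra detail that a common component of two distinct quartics of the same degree, one of which is integral, would force them to coincide. No gaps.
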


\begin{proof}
The first part is clear from the properness of $\wtD$.  Suppose that $D,E$ are distinct elements of $\mathscr{L}$.  We proceed by contradiction and assume that $D$ is integral.  Since $D$ and $E$ are distinct quartics and $D$ is integral, we deduce that the intersection $D \cap E$ has dimension~$0$, degree~$16$ and contains $\fl$.  This is impossible, since the degree of $\fl$ is~$24$.  Thus, $D$ cannot be integral and we are done.
\end{proof}

Let $R$ be a DVR with residue field $k$ and maximal ideal $\mathfrak{m}$ generated by $t \in \mathfrak{m}$.  We define a function $r \colon R[x_1 , \ldots , x_n] \to k[x_1 , \ldots , x_n]$ that we call {\emph{$k$-reduction}}, as follows.  Let $f \in R[x_1,\ldots,x_n]$ be a polynomial.  If $f$ vanishes, then we define $r(f) = 0$.  If $f$ is non-zero, then let $v$ denote the largest power of $t$ dividing all the coefficients of $f$ and write $f = t^v f'$ with $f' \in R[x_1 , \ldots , x_n]$.  By construction, $f'$ is non-zero modulo $\mathfrak{m}$, and we define $r(f) \in k[x_1 , \ldots , x_n]$ to be the reduction of $f'$ modulo $\mathfrak{m}$.

\begin{Prop} \label{prop:re3}
Let $k$ be a field of characteristic coprime with~$6$.  Let $(C,\fl)$ be a pair in $\wtP$ and suppose that the quartic $C \subset \bp$ contains a line $\ell$ with multiplicity at least $3$.  Every {\nom} quartic for $(C,\fl)$ is singular at the point corresponding to the line $\ell$.
\end{Prop}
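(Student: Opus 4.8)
The plan is to fix an arc degenerating to $C$ and to reduce the statement to the vanishing of three explicit coefficients of the reduced harmonic quartic. First I would choose homogeneous coordinates $x,y,z$ on $\bp$ so that $\ell$ is the line $x=0$; its point in $\bpd$ is then $[1,0,0]$, and an equation for $C$ is $q_C = x^3 m$ for a linear form $m$. By the definition of $\wtD$ and of the $k$-reduction $r$, a {\nom} quartic $D$ is obtained as $D = r(H(q_t))$ for a family $q_t \in R[x,y,z]_4$ over a DVR $R$ with residue field $k$, special fiber $q_C$ and general fiber a general quartic. Since a quartic is singular at $[1,0,0]$ exactly when the coefficients of $u^4$, $u^3v$ and $u^3w$ in its equation all vanish, it suffices to prove that these three coefficients of $D$ vanish.

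Writing $\delta = q_t - q_C$, which reduces to $0$ modulo $\fm$, the identity~\eqref{e:bil} and $H(q_C)=0$ give $H(q_t) = \bitq{q_C}{\delta} + H(\delta)$. Because $x^3$ divides $q_C$, the first rows of Table~\ref{tab:contih} show that $\bitq{q_C}{\delta}$ has degree at most $1$ in $u$ and depends \emph{linearly} only on the part $(\delta_3,\delta_4)$ of $\delta$ of degree at most $1$ in $x$. Consequently the three coefficients of interest come entirely from $H(\delta)$, and by the same table they equal the quadratic form $S(\delta_4)$ (coefficient of $u^4$) together with two bilinear expressions in $(\delta_3,\delta_4)$ (coefficients of $u^3v$ and $u^3w$); I will write this quadratic map as $\Pi(\delta_3,\delta_4)$, and note $\operatorname{val}\Pi \ge 2\operatorname{val}(\delta_3,\delta_4)$.

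The heart of the argument is to show that $\Pi$ vanishes on the kernel of the linear map $\Lambda\colon q'\mapsto \bitq{q_C}{q'}$, so that each component of $\Pi$ lies in the ideal generated by the linear forms $\Lambda_i$ and $\Pi=\sum_i L_i\,\Lambda_i$ with the $L_i$ linear. To compute $\ker\Lambda$ I would reduce, using the $SL_2$-action on $y,z$, to the case $m|_{x=0}=y$; then the identities $\bitq{x^3y}{\delta_4}=-3u\,\partial_v\delta_4(-w,v)$ and $\bitq{x^3y}{x\delta_3}=3w\,\delta_3(-w,v)$ force a kernel element to have $\delta_4$ proportional to $(m|_{x=0})^4$ and $\delta_3$ proportional to $(m|_{x=0})^3$. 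A fourth power has a quadruple root, so $S(\delta_4)=0$ by Remark~\ref{rem:orbite}; and the two bilinear coefficients vanish because the forms $(m|_{x=0})^3\cdot(\text{linear})$ are tangent to the smooth null quadric $S=0$ at the point $(m|_{x=0})^4$, hence lie in the hyperplane $\bilq{(m|_{x=0})^4}{-}=0$. Thus $\Pi$ vanishes on $\ker\Lambda$.

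Finally I would turn the ideal relation into a valuation estimate: $\operatorname{val}\Pi(\delta_3,\delta_4)\ge \operatorname{val}(\delta_3,\delta_4)+\operatorname{val}\bitq{q_C}{\delta}$, which is strictly larger than $\operatorname{val}\bitq{q_C}{\delta}$ since $\delta\equiv 0 \pmod{\fm}$. As $\bitq{q_C}{\delta}$ contributes to $H(q_t)$ in degree at most $1$ in $u$, its valuation controls the power of $t$ stripped off by $r$, so the three coefficients are divisible by a strictly higher power of $t$ and reduce to $0$ in $D$; hence $D$ is singular at $[1,0,0]$. I expect the genuine obstacle to be precisely this last valuation bookkeeping in the degenerate case, where $\operatorname{val}\bitq{q_C}{\delta}$ could jump through cancellation against the $u$-degree at most $1$ part of $H(\delta)$; the relation $\Pi=\sum_i L_i\Lambda_i$, combined with the nullness of fourth powers for $S$, is exactly the input designed to survive this cancellation and keep $\Pi$ below the reduction threshold.
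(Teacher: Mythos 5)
Your reduction is sound up to the last step: you correctly isolate the three coefficients of $u^4,u^3v,u^3w$ of $H(q_t)$ as a quadratic map $\Pi(\delta_3,\delta_4)$ coming only from $H(\delta)$, and the membership $\Pi=\sum_i L_i\Lambda_i$ in the ideal of the components of $\bitq{q_C}{-}$ does give $\operatorname{val}\Pi\ge\operatorname{val}(\delta)+\operatorname{val}\bitq{q_C}{\delta}$. The gap is exactly the cancellation you flag at the end, and the ideal relation does \emph{not} survive it. To conclude you need $\rho:=\operatorname{val}H(q_t)<\operatorname{val}(\delta)+\operatorname{val}\bitq{q_C}{\delta}$, but the coefficients of $H(q_t)$ of degree $\le 1$ in $u$ have the form $\Lambda_i(\delta)+Q_i(\delta)$ with $Q_i$ quadratic in $\delta$, and the two parts can cancel to arbitrary order. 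Concretely, take $q_C=x^4$ and
\[
q_t=x^4+t\,x^3y+t\,xy^3+\tfrac14\,t^2y^4+t^Nz^4 .
\]
Here $\bitq{x^4}{\delta}=3t^2w^4+12t^Nv^4$ has valuation $2$, but the term $\bitq{x^3(ty)}{x(ty^3)}=-3t^2w^4$ of $H(\delta)$ cancels the $3t^2w^4$, and one computes $H(q_t)=12t^Nv^4-12t^{N+1}uv^3-12t^{N+1}u^3v+3t^{N+2}u^4$, so $\rho=N$ while your lower bound on the three special coefficients is $1+2=3$. The conclusion still holds for this arc, but your estimate proves nothing once $N>3$, and nothing in the argument prevents an arc whose cancellations leave the $u$-degree $\ge 3$ coefficients at the minimal valuation. (Two smaller issues: for $q_C=x^3y$ the kernel of $\Lambda$ is $\{\delta_3=0,\ \delta_4\propto y^4\}$, not $\delta_3\propto y^3$, and for $q_C=x^4$ the map $\Lambda$ does not see $\delta_3$ at all; also the identity $\Pi=\sum L_i\Lambda_i$ must be produced over $\mathbb{Z}[\frac16]$, not merely over the residue field, for the valuation inequality to apply.)

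The paper closes precisely this hole by avoiding the decomposition of $H(q_t)$ into parts linear and quadratic in $\delta$. It uses the $\SL$-invariance of $A(q)=\pd{q}{H(q)}$ (Proposition~\ref{prop:lie}, which rests on the symmetry of the trilinear form in Lemma~\ref{lem:trimi}) to obtain the identities of Corollary~\ref{cor:lie}, which are relations among the coefficients $h_{ijk}$ of $H(q_t)$ \emph{itself}: after normalizing $a_{400}$ or $a_{310}$ to $1$, they express $h_{400},h_{310},h_{301}$ as combinations of the remaining $h_{ijk}$ with coefficients in the maximal ideal, so their valuations exceed $\rho$ directly, immune to any cancellation. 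To repair your route you would need a relation comparing the top-$u$-degree coefficients of $H(q_t)$ to its actual lower-degree coefficients rather than to $\bitq{q_C}{\delta}$ --- and that is exactly what the Lie-algebra identities supply.
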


\begin{proof}
Let $D$ be a {\nom} quartic for $(C,\fl)$.  Choose coordinates $x,y,z$ on $\bp$ so that the line $\ell$ has equation $x=0$ and $C$ is either $x^4$ or $x^3 y$.  Choose a smooth irreducible curve $B$ and a morphism $B \to \wtD$ such that
\begin{itemize}
\item
there is a point $0 \in B$ with $0 \mapsto (C,\fl,D)$,
\item
the image of the generic point of $B$ is the triple $(C',\fl_{C'},H(C'))$, where $C'$ is a non-totally harmonic smooth quartic curve on which $\F$ is defined.
\end{itemize}
Denote by $R$ the local ring of $B$ at $0$.  Since the curve $B$ is smooth, the ring $R$ is a DVR.

Let $q(x,y,z) = \sum a_{ijk} x^i y^j z^k \in R[x,y,z]_4$ be a ternary quartic form defining the curve associated to the generic point of the image of $B$.  By the definition of $B$, the $k$-reduction $r(q)$ of $q$ is a form defining the curve $C$ and the $k$-reduction of $H(q)$ is a form defining the curve $D$.  It follows that $r(q)$ is proportional to either $x^4$ or $x^3y$ and thus, rescaling $q$ if necessary, we can assume that all the coefficients of $q$ are in $R$ and that one among $a_{400},a_{310}$ equals~$1$, so that all remaining ones are not invertible.  Write the harmonic quartic form of $q$ as $H(q)(u,v,w) = \sum h_{ijk} u^i v^j w^k \in R[u,v,w]$.

Denote by $I_2$ the set of exponents $(i,j,k)$ of monomials of degree $4$ in $3$ variables with $i \leq 2$.  Let $\mathfrak{a} \subset R$ denote the ideal generated by the coefficients of $q$ that are not invertible; let $\mathfrak{h} \subset R$ denote the ideal generated by the coefficients $h_{ijk}$ of $H(q)$ for which $(i,j,k)$ is in $I_2$.

If $x^4$ is an equation of $C$, then the identities 1,~2,~3 of Corollary~\ref{cor:lie} become
\[
\begin{array}{lrrrl}
1: &
48 h_{400} & + 6 a_{310} h_{310} & + 9 a_{301} h_{301} & = -
\displaystyle
\sum_{(i,j,k) \in I_2}
(i-j) \frac{i!j!k!}{2} a_{ijk} h_{ijk}
\\[13pt]
2: &
& 12 h_{310} & & = -
\displaystyle
\sum_{(i,j,k) \in I_2}
(i+1) \frac{i!j!k!}{2} a_{(i+1)(j-1)k} h_{ijk}
\\[13pt]
3: &
& & 12 h_{301} & = -
\displaystyle
\sum_{(i,j,k) \in I_2}
(i+1) \frac{i!j!k!}{2} a_{(i+1)j(k-1)} h_{ijk}.
\end{array}
\]

If $x^3y$ is an equation of $C$, then the identities 4,~5,~6 of Corollary~\ref{cor:lie} become
\[
\begin{array}{lrrrl}
4: &
12 h_{400} & + 6 a_{220} h_{310} & + 3 a_{211} h_{301} & = -
\displaystyle
\sum_{(i,j,k) \in I_2}
(j+1) \frac{i!j!k!}{2} a_{(i-1)(j+1)k} h_{ijk}
\\[13pt]
5: &
& 3 h_{310} & - 3 a_{301} h_{301} & = -
\displaystyle
\sum_{(i,j,k) \in I_2}
(j-k) \frac{i!j!k!}{2} a_{ijk} h_{ijk}
\\[13pt]
6: &
& & 3 h_{301} & = -
\displaystyle
\sum_{(i,j,k) \in I_2}
(j+1) \frac{i!j!k!}{2} a_{i(j+1)(k-1)} h_{ijk}.
\end{array}
\]
In either case, we obtain that the coefficients $h_{400}, h_{310}, h_{301}$ of $H(q)$ lie in the product $\mathfrak{a} \mathfrak{h}$ of the ideals~$\mathfrak{a}$ and~$\mathfrak{h}$.  Recall that $H(q)$ does not vanish.  Denote by $\rho$ the minimum valuation of a coefficient of $H(q)$: the minimum $\rho$ is attained at a monomial $h_{ijk}$ with $(i,j,k) \in I_2$.  Moreover, the valuation $\rho$ is strictly smaller than the valuation of each of the three coefficients $h_{400}, h_{310}, h_{301}$.  As a consequence, the $k$-reduction $r(H(q)) \in k[u,v,w]$ does not involve any of the monomials $u^4 , u^3 v , u^3 w$.  We conclude that the quartic $D \colon r(H(q)) = 0$ is singular at the point $[1,0,0]$ of $\bpd$ corresponding to the line $\ell$, as required.
\end{proof}

We want to give examples showing that the conclusion of Proposition~\ref{prop:re3} does not hold if the characteristic of the field $k$ divides~$6$.  To construct these examples, we prove a lemma, where we lift our standing assumption that the characteristic of $k$ is coprime with~$6$.  Again, we denote by $R$ a DVR and $k$ its residue field.

\begin{Lem} \label{lem:acca0}
Let $q(x,y,z) \in R[x,y,z]_4$ be a non-zero ternary quartic form.  Let $C_0 \subset \bp$ be the quartic defined by the vanishing of the $k$-reduction $r(q)$ of $q$ and let $H_0 \subset \bpd$ be the quartic defined by the vanishing of $r(H(q))$.  Then, there is a configuration $\fl_0$ of inflection lines associated to $C_0$ and $H_0$ is {\nom} for $(C_0,\fl_0)$.
\end{Lem}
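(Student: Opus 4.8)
The plan is to realize $(C_0,\fl_0,H_0)$ as the special fibre of a one-parameter family of points of $\wtD$ whose generic fibre already lies in $\wtD$, and then to read off the three coordinates of the limit. The point to keep in mind is that $r$ does not commute with the formation of the harmonic quartic: in general $r(H(q))$ is \emph{not} proportional to $H(r(q))$, and this is precisely the phenomenon the lemma records. So I would not try to compute $H_0$ directly from $C_0$; instead I would let the closure $\wtD$ produce it as a limit. After dividing $q$ by a suitable power of the uniformizer $t$, I may assume that its coefficients generate the unit ideal of $R$, so that $q$ defines a morphism $\Spec R \to \Pq$ whose closed point is $C_0 = V(r(q))$ and whose generic point is the quartic $q_L$ over $L = \mathrm{Frac}(R)$. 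Since $H_0$ is assumed to be a genuine quartic we have $H(q) \neq 0$, and as the coefficients of $H(q)$ are polynomials in those of $q$ with integer coefficients, this forces $H(q_L) \neq 0$; thus $q_L$ is not totally harmonic.

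Next I would produce a point of $\wtD$ over the generic point. Consider the rational map $h \colon \Pq \dashrightarrow \Pqd$ sending $C$ to $H(C)$; since the coefficients of $H(q)$ are quadratic forms in the coefficients of $q$, the map $h$ is a morphism on the dense open locus of non-totally-harmonic quartics. The projection of $\wtD$ to $\Pq \times \Pqd$ is the closure of the image of $\{(C,H(C)) : C \text{ general}\}$; as the general quartics are dense in the domain of $h$, this projection equals $\overline{\mathrm{graph}(h)}$. Because $q_L$ is not totally harmonic, the pair $([q_L],[H(q_L)])$ already lies on $\mathrm{graph}(h)$, hence in the projection of $\wtD$. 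Therefore there is a point $\xi$ of $\wtD$, defined over a finite extension $L'$ of $L$, whose first and third coordinates are $[q_L]$ and $[H(q_L)]$; call its middle coordinate $\fl^{\eta} \in \Hilb(L')$.

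Now I would degenerate $\xi$. Choose a DVR $R'$ with fraction field $L'$ dominating $R$ (the integral closure of $R$ in $L'$, localised at a prime over $\fm$); its residue field $k'$ is a finite extension of $k$. Since $\wtD$ is proper over $k$, being closed in the product of the two projective spaces and the Hilbert scheme, the valuative criterion extends $\xi \colon \Spec L' \to \wtD$ to a morphism $\Spec R' \to \wtD$. Let $(C_0',\fl_0,H_0')$ be the image of the closed point. Composing with the projections to $\Pq$ and to $\Pqd$ gives morphisms extending $[q_L]$ and $[H(q_L)]$ respectively; by separatedness these agree with the morphisms determined by the primitive parts of $q$ and of $H(q)$, whose values at the closed point are, by the very definition of $r$, the classes $[r(q)]$ and $[r(H(q))]$. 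Hence $C_0' = C_0$ and $H_0' = H_0$ over $k'$.

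Finally, the point $(C_0,\fl_0,H_0)$ lies in $\wtD$, so projecting to the first two factors places $(C_0,\fl_0)$ in $\wtP$, exhibiting $\fl_0$ as a configuration of inflection lines associated to $C_0$, while membership in $\wtD$ says exactly that $H_0$ is {\nom} for $(C_0,\fl_0)$; the finite extension $k'/k$ is harmless and can be absorbed by working geometrically. The crux of the argument, and the step I expect to be most delicate, is the identification of the third coordinate of the limit with $V(r(H(q)))$: this is where the ability to choose $\xi$ with third coordinate exactly $[H(q_L)]$ (possible precisely because $q_L$ is not totally harmonic) combines with separatedness of $\Pqd$ and the definition of $r$ to pin $H_0$ down, rather than leaving it as an uncontrolled limit.
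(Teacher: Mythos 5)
Your proposal is correct and takes essentially the same route as the paper's proof: pass to the generic fibre over the fraction field of $R$, where $H(q)\neq 0$ forces the quartic to be non--totally harmonic so that the only possible derived quartic is $V(H(q_F))$, then specialize inside the proper scheme $\wtD$ and identify the first and third coordinates of the limit with $V(r(q))$ and $V(r(H(q)))$ by separatedness. Your write-up simply makes explicit the graph argument and the identification of the limit, which the paper leaves terse.
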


\begin{proof}
Denote by $F$ the field of fractions of $R$.  Let $q_F$ be the image of $q$ under the inclusion $R[x,y,z] \subset F[x,y,z]$ and let $C_F \subset \mathbb{P}^2_F$ be the quartic curve with equation $q_F(x,y,z)=0$.  Choose a pair $(C_F,\fl)$ in $\widetilde{\mathbb{P}}^{14}_F$, so that $\fl$ is a configuration of inflection lines associated to $C_F$.  Since the configuration $\fl$ is contained in the inflection scheme $\fl(C_F) = H(C_F) \cap K(C_F)$, we deduce that the form $H(q_F)$ vanishes on $\fl$.  By properness of the Hilbert scheme, the configuration $\fl \in {\rm{Hilb}}_{24} \left( \mathbb{P}^2_F \right)$ admits a specialization to a configuration of inflection lines $\fl_0$ associated to $C_0$.  Under the inclusion $R[u,v,w] \subset F[u,v,w]$ the form $H(q)$ corresponds to the form $H(q_F)$.  Therefore, the quartic $H_0$ defined by $r(H(q)) = 0$ is derived for the pair $(C_0,\fl_0)$, as needed.
\end{proof}

\begin{Exa}
Let $k$ be a field, let $R$ denote the local ring of $\mathbb{A}^1_k$ at the origin $0$ and let $t$ be a local parameter near $0$.  We apply Lemma~\ref{lem:acca0} to the forms $q$ in $R[x,y,z]_4$ appearing in Table~\ref{tab:limiti}.  By Lemma~\ref{lem:acca0}, all the $k$-reductions $r(H(q))$ in Table~\ref{tab:limiti} vanish on a configuration of inflection lines associated to the plane quartic with equation $r(q)=0$.  In our examples, the quartic with equation $r(q)=0$ contains the line $x=0$ with multiplicity at least~$3$; nevertheless, the quartics with equation $r(H(q))=0$ do not contain the point $[1,0,0]$. Over a field of characteristic~$2$, the $k$-reductions $r(H(q))$ define smooth quartics;  over a field of characteristic~$3$, the $k$-reductions $r(H(q))$ define the double of a smooth conic.
\begin{table}[h]
    \centering
    $\begin{array}{|c|c|c|}
\hline
\textrm{Form } q(x,y,z) &
{\textrm{Characteristic of }} k &
\textrm{$k$-reduction } r(H(q)) (u,v,w) \\
\hline
\vphantom{W^{W^{W^W}}}
x^4+t (x^3 y + y^3 z + z^3 x + z^3 y) & 2 &
u^4 + u^3 v + v^3 w + w^3 u + u v^2 w \\[5pt]
x^3 y + t(y^2 z^2 + t (x z^3 + y^3 z)) & 2 &
u^4 + u w^3 + v^3 w \\[5pt]
x^4 + t (x^2 y z + y^2 z^2) & 3 &
(u^2 - v w)^2 \\[5pt]
x^3 y + t (x^2 y z + y^2 z^2) & 3 &
(u^2 - v w)^2 \\[3pt]
\hline
    \end{array}$
    \caption{Some quartic forms and $k$-reductions of their harmonic quartics.}
    \label{tab:limiti}
\end{table}
\end{Exa}

\begin{Exa}
Let $k$ be a field of characteristic coprime with~$6$. Let $R$ denote the local ring of $\mathbb{A}^1_k$ at the origin $0$ and let $t$ be a local parameter near $0$. Let $C \subset \bp$ be the (reducible) plane quartic with equation $y^4 - y z^3 = 0$; the quartic $C$ is, up to projective equivalence, the only totally harmonic quartic not containing a line with multiplicity at least 3.  We apply Lemma~\ref{lem:acca0} to the form $q = (y^4 - y z^3) + t (x^4 + z^4)$ in $R[x,y,z]_4$. We obtain that the $k$-reduction $r(H(q)) = u^4 + v^3 w + w^4$ defines a smooth quartic containing a configuration of inflection lines associated to $C$.
\end{Exa}

\section{The reconstruction of the general plane quartic}
\label{sec:rifi}

In this section, we assume that the field $k$ is algebraically closed and that its characteristic is coprime with~$6$.  We prove our main reconstructibility result in Theorem~\ref{thm:rf}.

Let $\varepsilon \in k$ satisfy $\varepsilon^4 = 1$ and let $\qep$ be the quartic form
\[
\qep = (x-y)^4 + (x-z)^4 + (y-\varepsilon z)^4 - (x^4 + y^4 + z^4) .
\]
Observe that the three identities
\[
\qev (x,y,0) = (x-y)^4
\quad \quad
\qev (x,0,z) = (x-z)^4
\quad \quad
\qev (0,y,z) = (y- \varepsilon z)^4
\]
hold.

\begin{Lem}\label{lem:conc}
A smooth plane quartic $C$ has $3$ non concurrent hyperinflection lines if and only if, up to a change of coordinates, there are constants $\varepsilon , \lambda , \mu , \nu \in k$ with $\varepsilon^4 = 1$ such that
\[
\qep + xyz (\lambda x + \mu y + \nu z) = 0
\]
is an equation of $C$.
\end{Lem}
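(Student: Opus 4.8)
The plan is to prove the two implications separately, with the reverse direction a quick computation and the forward direction carrying all the content.

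For the ``if'' direction, assume $C=V(q)$ is smooth with $q=\qep+xyz(\lambda x+\mu y+\nu z)$. The interior term $xyz(\lambda x+\mu y+\nu z)$ vanishes identically on each coordinate line, so by the three identities for $\qev$ recorded before the lemma one has $q|_{z=0}=(x-y)^4$, $q|_{y=0}=(x-z)^4$ and $q|_{x=0}=(y-\varepsilon z)^4$. Each restriction is a nonzero fourth power, hence each coordinate line meets $C$ in a single point of multiplicity $4$; as $C$ is smooth it contains no line and all of its points are smooth, so the three coordinate lines are hyperinflection lines, and they are visibly non-concurrent.

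For the ``only if'' direction, I first send the three given non-concurrent hyperinflection lines to $\{z=0\}$, $\{y=0\}$, $\{x=0\}$ by a change of coordinates. Writing $q=\sum a_{ijk}x^iy^jz^k$, the hyperinflection hypothesis says precisely that each of $q|_{z=0}$, $q|_{y=0}$, $q|_{x=0}$ is a nonzero fourth power of a linear form. The structural observation is that, among the fifteen monomials of degree $4$, exactly the three \emph{interior} ones $x^2yz,\ xy^2z,\ xyz^2$ (those with all exponents positive) occur in none of the three restrictions, whereas every \emph{boundary} monomial is determined by them. Thus, once the three restrictions are normalized to agree with those of a suitable $\qev$, the form $q$ will differ from $\qev$ only in the coefficients of $x^2yz,xy^2z,xyz^2$, that is, by a term $xyz(\lambda x+\mu y+\nu z)$, which is exactly the desired shape.

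The crux, and the only place where smoothness is essential, is to exclude degenerate fourth powers by showing that no vertex of the triangle lies on $C$. If, say, $a_{400}=0$, then the two restrictions through $[1:0:0]$ are forced to be pure powers $c\,y^4$ and $c'z^4$, which kills $a_{310},a_{220},a_{130}$ and $a_{301},a_{202},a_{103}$; a direct check then shows that $q(1,0,0)$ together with all three first partials vanish at $[1:0:0]$, contradicting smoothness. Hence $a_{400},a_{040},a_{004}$ are all nonzero, so writing $q|_{z=0}=a_{400}(x+b_1y)^4$, $q|_{y=0}=a_{400}(x+b_2z)^4$, $q|_{x=0}=a_{040}(y+b_3z)^4$ we have $b_1,b_2,b_3$ nonzero, and comparing the two expressions for the coefficient of $z^4$ (using $a_{040}=a_{400}b_1^4$) gives the single compatibility relation $b_2^4=b_1^4b_3^4$. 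Finally I normalize using the residual symmetry: the diagonal torus $(x,y,z)\mapsto(sx,ty,uz)$ acts by $b_1\mapsto b_1t/s$, $b_2\mapsto b_2u/s$, $b_3\mapsto b_3u/t$, and together with rescaling $q$ I arrange $a_{400}=1$, $b_1=b_2=-1$. Since the relation $b_2^4=b_1^4b_3^4$ is torus-invariant it still holds, forcing $b_3^4=1$; setting $\varepsilon:=-b_3$ gives $\varepsilon^4=1$, $a_{040}=a_{400}b_1^4=1$ and $q|_{x=0}=(y-\varepsilon z)^4$. Now $q$ and $\qev$ have identical restrictions to all three coordinate lines, hence identical boundary coefficients, so $q=\qep+xyz(\lambda x+\mu y+\nu z)$ for suitable $\lambda,\mu,\nu$. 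The entire difficulty is therefore concentrated in the vertex/smoothness argument; the rest is bookkeeping of the torus action and of the fourth-root-of-unity ambiguity that produces $\varepsilon$.
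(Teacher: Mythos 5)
Your proof is correct and follows essentially the same route as the paper's: move the three lines to the coordinate lines, use smoothness to rule out hyperinflection points at the vertices of the coordinate triangle, normalize the hyperinflection points by the diagonal torus, and extract $\varepsilon^4=1$ from the compatibility of the coefficients of $x^4,y^4,z^4$ shared by the three restrictions. The only cosmetic difference is that you exclude the vertices by an explicit partial-derivative computation, where the paper notes that otherwise two distinct coordinate lines would be tangent to $C$ at the same smooth point.
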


\begin{proof}
Suppose first that $C$ is a smooth plane quartic having the three lines $\ell_1 , \ell_2 , \ell_3$ as non concurrent hyperinflection lines.  Changing coordinates in $\bp$ if necessary, we reduce to the case in which the three lines have equations
\[
\ell_1 \colon x=0
\quad \quad \quad
\ell_2 \colon y=0
\quad \quad \quad
\ell_3 \colon z=0 .
\]
For $i \in \{1,2,3\}$ let $p_i \in C$ denote the hyperinflection point of $C$ corresponding to the hyperinflection line $\ell_i$: none of the three points $p_1,p_2,p_3$ is a coordinate point, since otherwise two coordinate lines would be tangent to $C$ at the same point.  Rescaling the $x$ and the $y$ coordinate if necessary, we reduce further to the case in which $p_3 = [1,1,0]$ and $p_2 = [1,0,1]$.  Write $p_1 = [0,1,\epsilon]$ for the remaining hyperinflection point, where $\epsilon \in k$.

Let $q(x,y,z)$ be a ternary quartic form.  We now impose the linear conditions implying that the coordinate lines are hyperinflection lines to the plane quartic defined by $q(x,y,z) = 0$ at the points $p_1,p_2,p_3$.  Restricting the polynomial $q$ to each line $z=0$, $y=0$ and $x=0$ in succession we find the conditions
\[
q(x,y,0) = \alpha (x-y)^4
\quad \quad \quad
q(x,0,z) = \beta (x-z)^4
\quad \quad \quad
q(0,y,z) = \gamma (y-\epsilon z)^4 ,
\]
where $\alpha , \beta , \gamma \in k$ are non-zero constants.  Moreover, $\alpha$ and $\beta $ must coincide, since they are the coefficient of $x^4$ in $q$.  Similarly, $\alpha $ and $\gamma$ must also coincide, since they are the coefficient of $y^4$ in $q$.  Analogously, $\beta $ and $\gamma \epsilon^4$ must also coincide, since they are the coefficient of $z^4$ in $q$.  Rescaling $q$ if necessary, we reduce to the case in which the identities $\alpha = \beta = \gamma = \epsilon^4 = 1$ hold.

Finally, observe that the only remaining coefficients of $q$ are the coefficients of the three monomials $x^2yz , xy^2z , xyz^2$, as required.

The converse is clear.
\end{proof}

Lemma~\ref{lem:conc} gives a standard form for a smooth plane quartic with the coordinate lines as hyperinflection lines.  In the next lemma, we determine some of the conditions arising from imposing a further hyperinflection line.

\begin{Lem} \label{lem:mab}
Let $\qep + xyz (\lambda x + \mu y + \nu z) = 0$ be the equation of a quartic curve $C$, let $a,b \in k$ be constants and let $\ell$ be the line with equation $z = ax + by$.  If $C$ and $\ell$ meet at a unique point $p$, then there are constants $d_p,e_p,f_p \in k$, depending on $p$, such that the identity
\[
\begin{pmatrix}
a & 0 & a^2 \\
b & a & 2ab \\
0 & b & b^2
\end{pmatrix}
\begin{pmatrix}
\lambda \\ \mu \\ \nu
\end{pmatrix} =
\begin{pmatrix}
d_p \\ e_p \\ f_p
\end{pmatrix}
\]
holds.
\end{Lem}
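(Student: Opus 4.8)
The plan is to restrict the quartic form $q = \qep + xyz(\lambda x + \mu y + \nu z)$ to the line $\ell$, read off the three middle coefficients of the resulting binary quartic, and match them against the two summands of this decomposition. First I would parameterize $\ell$ by $[x:y]$, setting $z = ax+by$, so that
\[
q|_\ell(x,y) = \qev(x,y,ax+by) + xy(ax+by)\bigl((\lambda+a\nu)x + (\mu+b\nu)y\bigr).
\]
The second summand $P(x,y)$ is divisible by $xy$; expanding the product gives
\[
P(x,y) = (a\lambda + a^2\nu)\,x^3y + (b\lambda + a\mu + 2ab\nu)\,x^2y^2 + (b\mu + b^2\nu)\,xy^3,
\]
and I would observe that the column vector formed by its coefficients of $x^3y$, $x^2y^2$, $xy^3$ is exactly the matrix product on the left-hand side of the claimed identity. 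Thus the whole statement reduces to showing that these three coefficients of $P$ depend only on the point $p$.

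The key step is to exploit the hypothesis that $C$ and $\ell$ meet only at $p$. Since this forces $\ell \not\subset C$, the restriction $q|_\ell$ is a nonzero binary quartic whose only root on $\ell$ is $p$, hence a scalar multiple of the fourth power of the linear form vanishing at $p$. Writing $p = [p_0 : p_1]$ in the coordinates $[x:y]$ on $\ell$, this says $q|_\ell = \kappa (p_1 x - p_0 y)^4$ for some $\kappa \in k$. The crucial observation is that $P$ is divisible by $xy$, so the $x^4$- and $y^4$-coefficients of $q|_\ell$ agree with those of $\qev|_\ell$, which involve only the fixed data $\varepsilon, a, b$ and not $\lambda, \mu, \nu$. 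Comparing these top coefficients with $\kappa p_1^4$ and $\kappa p_0^4$ (at least one of $p_0,p_1$ is nonzero, and $\kappa\neq 0$ as $q|_\ell\neq 0$) pins down the scalar $\kappa$ in terms of $p$ alone, so $q|_\ell$ is completely determined by $p$.

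Finally, since $P = q|_\ell - \qev|_\ell$ with $q|_\ell$ determined by $p$ and $\qev|_\ell$ fixed, the coefficients $d_p, e_p, f_p$ of $x^3y$, $x^2y^2$, $xy^3$ in $P$ are functions of $p$ alone, which is exactly the asserted identity. I expect the only delicate point to be the argument that $q|_\ell$ depends solely on $p$: a priori it could depend on $\lambda,\mu,\nu$ through the second summand, and the resolution is precisely that this summand contributes nothing to the two outer coefficients, so the fourth-power constraint forces $q|_\ell$ to coincide with the $p$-determined form. The algebra itself is routine; the content of the lemma is this reinterpretation of the hyperinflection condition as a linear system in $(\lambda,\mu,\nu)$.
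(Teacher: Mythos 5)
Your proof is correct and follows essentially the same route as the paper: restrict to $\ell$, observe that the $\lambda,\mu,\nu$-dependent summand contributes only to the coefficients of $x^3y, x^2y^2, xy^3$ and is exactly the claimed matrix product, and use the fourth-power condition together with the two outer coefficients (which come only from $\qev|_\ell$) to see that $q|_\ell$ is determined by $p$. The only cosmetic difference is that the paper normalizes, after possibly exchanging $x$ and $y$, so that the $x^4$-coefficient $(1-a)^4$ is nonzero and writes $c_\ell=(1-a)^4(x-\pi y)^4$ before equating middle coefficients, whereas you determine the scalar $\kappa$ from whichever outer coefficient is nonzero; your handling of that case split is, if anything, slightly more explicit.
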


\begin{proof}
Restricting the equation of $C$ to the line $\ell$ we find the binary quartic form
\[
c_\ell (x,y) = \qev (x,y,ax+by) + xy(ax+by) (\lambda x + \mu y + \nu (ax+by)) .
\]
By assumption, the form $c_\ell$ is the fourth power of a non-zero linear form.  This implies that at least one of the coefficients of $x^4$ and $y^4$ is non-zero.  Exchanging the roles of $x$ and $y$ if necessary, we reduce to the case in which the coefficient $(1-a)^4$ of $x^4$ is non-zero and hence there is a constant $\pi \in k$ such that $c_\ell = (1-a)^4 (x-\pi y)^4$.  Equating the coefficients of $x^3y , x^2y^2 , xy^3$ we obtain the required relations.
\end{proof}

\begin{Prop} \label{prop:cinf}
Let $p_1 , \ldots , p_5 \in \bp$ be distinct points and let $\ell_1 , \ldots , \ell_5 \subset \bp$ be distinct lines such that for $i \in \{1 , \ldots , 5\}$ the point $p_i$ lies on the line $\ell_i$.  There is at most one smooth quartic curve $C$ admitting $\ell_1 , \ldots , \ell_5$ as hyperinflection lines and the points $p_1 , \ldots , p_5$ as corresponding hyperinflection points.
\end{Prop}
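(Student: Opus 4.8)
The plan is to reduce every candidate quartic to the normal form of Lemma~\ref{lem:conc} and then to show that the two leftover hyperinflection conditions determine the three remaining coefficients.

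First I would locate three non concurrent lines among $\ell_1,\dots,\ell_5$. By Lemma~\ref{lem:singU} a hyperinflection line has multiplicity~$2$ in the inflection scheme, so Proposition~\ref{prop:otto} bounds the number of hyperinflection lines of a smooth quartic through a fixed point by~$4$. If no three of the $\ell_i$ were non concurrent, then every triple would be concurrent; since two distinct lines meet in a single point, all five lines would then pass through one point, contradicting this bound. After relabelling, $\ell_1,\ell_2,\ell_3$ are non concurrent. I would choose coordinates carrying them to the coordinate lines and use the residual diagonal torus to place $p_2=[1,0,1]$ and $p_3=[1,1,0]$; the points are not vertices, since two coordinate lines cannot be tangent to a smooth $C$ at a common point, so this coordinate change is dictated by the given data alone. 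Lemma~\ref{lem:conc} then shows that any smooth quartic realising the prescribed data has an equation $\qep+xyz(\lambda x+\mu y+\nu z)=0$ with $\varepsilon$ fixed, $\varepsilon^4=1$, and some $(\lambda,\mu,\nu)\in k^3$. It remains to prove that the triple $(\lambda,\mu,\nu)$ is unique.

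Here the decisive remark is that modifying the equation by a multiple of $xyz\,m_i$, where $m_i$ is a linear form defining $\ell_i$, leaves the restriction to $\ell_i$ unchanged, because $m_i$ vanishes on $\ell_i$. Fix $i\in\{4,5\}$ and consider the affine map $\rho_i$ from $k^3$ to the space of binary quartic forms on $\ell_i$ sending $(\lambda,\mu,\nu)$ to the restriction of the equation to $\ell_i$; its linear part has image the multiples of the restriction of $xyz$ to $\ell_i$ and kernel the line spanned by the coefficient vector of $\ell_i$. The locus $Z_i\subset k^3$ of parameters for which $\ell_i$ is a hyperinflection line at $p_i$ is the preimage under $\rho_i$ of the line of fourth powers of the form vanishing at $p_i$. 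Since that fourth power is not divisible by the restriction of $xyz$ to $\ell_i$ (whose three roots are the points where $\ell_i$ meets the coordinate lines, not all equal to $p_i$), these two lines of binary quartics meet only in $0$, and therefore the difference space of the affine subspace $Z_i$ is contained in the span of the coefficient vector of $\ell_i$. Lemma~\ref{lem:mab} records the explicit incarnation of this computation: the displayed matrix is singular, with kernel the coefficients of $\ell_i$. Because $\ell_4$ and $\ell_5$ are distinct, their coefficient vectors are not proportional, so the difference space of $Z_4\cap Z_5$ is contained in the intersection of these two lines, which is $0$. Hence $Z_4\cap Z_5$ is at most a single point; since the parameter of any admissible $C$ lies in it, there is at most one such $C$.

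The point I expect to be the crux is the rank computation in the previous paragraph. A naive count would suggest that a single extra hyperinflection line, imposing three linear conditions on three parameters, already determines the quartic; in fact the associated linear map is degenerate, and its one dimensional deficiency direction is precisely the coefficient vector of the line itself. What makes the argument succeed is that two \emph{distinct} extra lines have \emph{transverse} deficiency directions, so that together they cut the three parameter family down to a point. The only remaining work is organisational: verifying that the normalisation of the first three lines and their inflection points is forced by the data, so that competing quartics are genuinely compared within a single coordinate system.
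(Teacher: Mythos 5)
Your proposal is correct and follows essentially the same route as the paper: reduce to the normal form of Lemma~\ref{lem:conc}, then show that the two remaining hyperinflection conditions pin down $(\lambda,\mu,\nu)$ via Lemma~\ref{lem:mab}. Your identification of the kernel of each $3\times 3$ block as the span of the coefficient vector of the corresponding line is exactly why the paper's stacked matrix $M_{abcd}$ has rank~$3$ for distinct nonzero $(a,b)$ and $(c,d)$, a claim the paper leaves as ``easy to check''; your version supplies the conceptual reason.
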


\begin{proof}
Suppose that $C$ is a smooth quartic curve with the required properties.  First, the five lines $\ell_1 , \ldots , \ell_5$ cannot contain a common point by Proposition~\ref{prop:otto}.  Thus, there are three non concurrent lines among $\ell_1 , \ldots , \ell_5$; relabeling the lines, if necessary, we assume that $\ell_1 , \ell_2 , \ell_3$ are non concurrent.  Applying Lemma~\ref{lem:conc} to the quartic $C$ and the lines $\ell_1 , \ell_2 , \ell_3$, we obtain coordinates on $\bp$ such that an equation of $C$ is
\[
C \colon \quad \qep + xyz (\lambda x + \mu y + \nu z) = 0 .
\]
The two hyperinflection lines $\ell_4$ and $\ell_5$ are therefore not coordinate lines.  As a consequence, each of these two lines has an equation involving at least two of the variables $x,y,z$.  Permuting if necessary the variables, we reduce to the case in which the equations of the two lines $\ell_4$ and $\ell_5$ involve the variable $z$: let $z = ax + by$ be an equation for $\ell_4$ and $z = cx + dy$ an equation for $\ell_5$.  Applying Lemma~\ref{lem:mab} we find that the matrix $M_{abcd}$ of the not necessarily homogeneous linear system satisfied by the coefficients $\lambda , \mu , \nu$ of the equation of $C$ is
\[
M_{abcd} =
\begin{pmatrix}
a & 0 & a^2 \\
b & a & 2ab \\
0 & b & b^2 \\
c & 0 & c^2 \\
d & c & 2cd \\
0 & d & d^2
\end{pmatrix} .
\]
It is now easy to check that the matrix $M_{abcd}$ has rank $3$ as soon as $(a,b)$ and $(c,d)$ are distinct and different from $(0,0)$.  We conclude that there is at most one solution to the system, as required.
\end{proof}

\begin{Cor}\label{cor:rico}
Let $C,D$ be smooth quartic curves.  If $\fl(C)$ and $\fl(D)$ coincide and $C$ has at least $5$ hyperinflection lines, then $C$ and $D$ coincide.
\end{Cor}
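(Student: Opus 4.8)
The plan is to recover from the common scheme $\fl(C)=\fl(D)$ both the hyperinflection lines of the two curves and their hyperinflection points, and then to invoke the uniqueness statement of Proposition~\ref{prop:cinf}.

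First I would identify the hyperinflection lines intrinsically. Since both $C$ and $D$ are smooth, Lemma~\ref{lem:defU} shows that $\fl(C)$ and $\fl(D)$ are complete intersections of dimension zero and length $24$, and Lemma~\ref{lem:singU} shows that at a smooth quartic every point of the inflection scheme is either a simple inflection line, of multiplicity $1$, or a hyperinflection line, of multiplicity $2$. Hence the hyperinflection lines of a smooth quartic are exactly the length-$2$ subschemes of its inflection scheme. As $\fl(C)=\fl(D)$, the two curves share the same hyperinflection lines; since $C$ has at least $5$ of them, I obtain five distinct lines $\ell_1,\dots,\ell_5$ that are hyperinflection lines of both $C$ and $D$.

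The crux is to recover the hyperinflection points. Fix a hyperinflection line $\ell$, with hyperinflection point $p_C$ for $C$ and $p_D$ for $D$; I want to prove $p_C=p_D$. The decisive input is the last sentence of Lemma~\ref{lem:singU}: the tangent line to $H(C)$ at $\ell$ is the pencil of lines through $p_C$. The observation is that this tangent direction is already encoded by the length-$2$ scheme $\fl(C)$ at $\ell$. Working in local coordinates at $\ell$ in which $H(C)$ is smooth, hence a local coordinate, and $K(C)$ has a double point whose tangent cone is transverse to the tangent of $H(C)$ (both furnished by Lemma~\ref{lem:singU}), I would check that the ideal of $\fl(C)=V(H(C))\cap V(K(C))$ near $\ell$ equals $\mathfrak{m}^2+(h)$, where $h$ is the linear part of $H(C)$. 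Thus the reduced tangent direction of the fat point $\fl(C)$ at $\ell$ coincides with the tangent line to $H(C)$ at $\ell$, that is, with the pencil of lines through $p_C$. Running the same argument for $D$ and using $\fl(C)=\fl(D)$, the fat point at $\ell$ is literally the same subscheme of $\bpd$, so its tangent direction recovers both $p_C$ and $p_D$; hence $p_C=p_D$.

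Finally I would conclude. The five lines $\ell_1,\dots,\ell_5$ are distinct, and the associated hyperinflection points $p_1,\dots,p_5$ are distinct as well, since a smooth point of a plane curve has a unique tangent line, so $p_i=p_j$ would force $\ell_i=\ell_j$. Both $C$ and $D$ are then smooth quartics admitting $\ell_1,\dots,\ell_5$ as hyperinflection lines with the very same corresponding hyperinflection points $p_1,\dots,p_5$, and Proposition~\ref{prop:cinf} forces $C=D$. I expect the main obstacle to be the local computation of the third paragraph: one must verify carefully that the scheme structure of the isolated double point of $\fl(C)$ encodes exactly the tangent direction of $H(C)$, so that the hyperinflection point is genuinely determined by $\fl(C)$ alone and not by the extra datum $H(C)$, which a priori could differ between $C$ and $D$.
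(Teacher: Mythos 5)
Your proposal is correct and follows essentially the same route as the paper: recover the hyperinflection lines as the length-$2$ points of the common scheme $\fl(C)=\fl(D)$, use the last assertion of Lemma~\ref{lem:singU} to identify the corresponding hyperinflection points from the tangent direction of the harmonic quartic, and conclude with Proposition~\ref{prop:cinf}. Your local computation that the ideal of the fat point is $\mathfrak{m}^2+(h)$ is exactly the (left implicit) justification for the paper's claim that $H(C)$ and $H(D)$ share a tangent direction at each hyperinflection line, so you have simply made the paper's terse argument explicit.
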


\begin{proof}
The two harmonic quartics $H(C)$ and $H(D)$ meet at the points corresponding to inflection lines.  Moreover, at the points corresponding to hyperinflection lines, they also have a common tangent direction, thanks to Lemma~\ref{lem:singU}.  This implies that $D$ must also go through the hyperinflection points of $C$.  Since $C$ has at least $5$ hyperinflection lines and $D$ shares these same lines as hyperinflection lines and also goes through the corresponding hyperinflection points, we conclude using Proposition~\ref{prop:cinf} that $C$ and $D$ coincide, as required.
\end{proof}

In the next lemma, we study a pencil $V_t$ of smooth plane quartic curves with at least $8$ hyperinflection lines.  Over fields of characteristic zero, the pencil $V_t$ was studied by Vermeulen~\cite{V} and Girard-Kohel~\cite{GK}; we also used this pencil in~\cite{PT2}.  For properties of the pencil $V_t$, see the cited references.

\begin{Lem}\label{lem:Vt}
Let $k$ be a field of characteristic coprime with~$6$, let $t$ be a constant in $k$ and let $V_t$ be the plane quartic with equation $t x^4 + y^4 - z^4 - 2 x^2y^2 -4 xyz^2=0$.  For $t$ different from $0,1,\frac{1}{81}$ the quartic $V_t$ is smooth and has exactly $8$ hyperinflection lines and exactly $8$ simple inflection lines.  The harmonic quartic of $V_t$ is
\[
H(V_t) \colon \quad
-3 u^4 - 3 t v^4 + (3 t + 1) w^4 + 10 u^2 v^2 + 8 u v w^2 = 0 ;
\]
for general $t$, the curve $H(V_t)$ is smooth.  More precisely, $H(V_{-1})$ is smooth.
\end{Lem}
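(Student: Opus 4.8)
The plan is to prove the four assertions of the lemma separately: the formula for $H(V_t)$, the smoothness of $V_t$, the exact counts of inflection lines, and the smoothness of $H(V_t)$.

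I would begin with the harmonic quartic, since it is needed throughout. Decomposing the defining form by its degree in $x$,
\[
q = t\,x^4 + x^2(-2y^2) + x(-4yz^2) + (y^4 - z^4),
\]
the bilinear expansion~\eqref{e:bil} reduces $H(q)$ to the entries of Table~\ref{tab:contih} together with two direct evaluations of $\bitq{-}{-}$. Summing the contributions returns $4$ times the displayed form $-3u^4 - 3tv^4 + (3t+1)w^4 + 10u^2v^2 + 8uvw^2$; since the harmonic quartic determines the curve $H(V_t)$ only up to a nonzero scalar, this proves the formula. For the smoothness of $V_t$ I would apply the Jacobian criterion: as the characteristic does not divide $4$, a singular point is a common zero of
\[
\partial_x q = 4(tx^3 - xy^2 - yz^2),\qquad \partial_y q = 4(y^3 - x^2y - xz^2),\qquad \partial_z q = -4z(z^2 + 2xy).
\]
Splitting on the factor $z(z^2 + 2xy)$ of $\partial_z q$ and solving the two resulting systems shows that a common zero exists only for $t \in \{0,1\}$; hence $V_t$ is smooth for all $t \neq 0,1$, in particular for $t \neq 0,1,\tfrac{1}{81}$.

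The core of the lemma is the count, and this is where I expect the real difficulty and where the excluded value $\tfrac{1}{81}$ must surface. Since $V_t$ is smooth, Lemma~\ref{lem:defU} makes $\fl(V_t)$ a zero-dimensional scheme of length $24$, and Lemma~\ref{lem:singU} says each of its points has multiplicity $1$ (a simple inflection line) or $2$ (a hyperinflection line). So it suffices to prove there are exactly $8$ hyperinflection lines: the length count then forces exactly $24 - 2\cdot 8 = 8$ simple inflection lines. By Remark~\ref{rem:ptitripli} and~\eqref{e:eval}, a hyperinflection line is a line $\ell$ for which $q|_\ell$ is a perfect fourth power. I would first rule out the lines through $[0,0,1]$ (the locus $w = 0$ of $\bpd$): restricting $q$ to such a line gives a biquadratic form, whose only candidate roots of multiplicity $\geq 3$ sit at the two parametrizing coordinate points, and this forces $t = 0$. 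Every hyperinflection line is therefore of the form $\ell\colon z = \alpha x + \beta y$.

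Writing $q|_\ell = \sum_{i=0}^{4} e_i x^{4-i}y^i$, one computes $e_0 = t - \alpha^4$, $e_4 = 1 - \beta^4$, $e_2 = -2(\alpha\beta+1)(3\alpha\beta+1)$, and, crucially, $e_1 = -4\alpha^2(\alpha\beta+1)$ and $e_3 = -4\beta^2(\alpha\beta+1)$. The shared factor $(\alpha\beta+1)$ dictates a case split. If $\alpha\beta = -1$, then $e_1 = e_2 = e_3 = 0$ and the fourth-power condition reduces to $e_0 = 0$ or $e_4 = 0$; this produces exactly the eight lines tangent at the points $[1,0,\alpha]$ with $\alpha^4 = t$ and $[0,1,\beta]$ with $\beta^4 = 1$, which one checks lie on $V_t$, are distinct, and give distinct tangent lines provided $t \neq 0,1$. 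If $\alpha\beta \neq -1$, then all five $e_i$ are nonzero (the degenerate fourth powers proportional to $x^4$ or $y^4$ are excluded, as they would force $\beta=0$ or $\alpha=0$ against $e_4=1$ or $e_0=t$), so I would impose the classical relations for a fourth power. After cancelling the invertible factor $(\alpha\beta+1)$, the relations $3e_1^2 = 8e_0e_2$ and $3e_3^2 = 8e_2e_4$ give $2\alpha^4 = -t(1+3\alpha\beta)$ and $2\beta^4 = -(1+3\alpha\beta)$; feeding these into $e_1 e_3 = 16 e_0 e_4$ yields $(\alpha\beta)^2 = \tfrac{9t}{4}$, while $4e_2^2 = 9e_1e_3$ forces $(1+3\alpha\beta)^2 = 9(\alpha\beta)^2$, hence $\alpha\beta = -\tfrac16$ and then $t = \tfrac{1}{81}$. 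Thus for $t \neq \tfrac{1}{81}$ this case is empty and $V_t$ has exactly the eight hyperinflection lines above, completing both counts.

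Finally, the smoothness of $H(V_t)$ for general $t$ is an open condition on $t$, so it suffices to exhibit one smooth member. Setting $t = -1$ gives $H(V_{-1})\colon -3u^4 + 3v^4 - 2w^4 + 10u^2v^2 + 8uvw^2 = 0$, whose smoothness I would confirm directly via the Jacobian criterion, beginning with $\partial_w = 8w(2uv - w^2)$. The main obstacle is the hyperinflection count, and within it the case $\alpha\beta \neq -1$: the argument hinges on organising the fourth-power relations so that the common factor $(\alpha\beta+1)$ can be removed and the residual relations collapse to the single value $t = \tfrac{1}{81}$, while making sure that no hyperinflection line is overlooked. The remaining assertions are direct, if somewhat lengthy, computations.
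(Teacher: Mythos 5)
Your route to the two counts is genuinely different from the paper's. The paper works on the curve side: it asserts that the hyperinflection points of $V_t$ are exactly $V_t \cap \{xy=0\}$ and that the remaining inflection points are $V_t \cap \{2xy+3z^2=0\}$, and the value $\tfrac{1}{81}$ appears as the discriminant of $t\lambda^2+\tfrac29\lambda+1$ governing when those latter $8$ points stay distinct. You instead work in the dual plane: after disposing of the lines through $[0,0,1]$, you restrict $q$ to $z=\alpha x+\beta y$, read off the coefficients $e_0,\dots,e_4$ (which I checked and are correct), and split on the common factor $\alpha\beta+1$. The case $\alpha\beta=-1$ yields exactly the $8$ hyperinflection lines, and in the case $\alpha\beta\neq-1$ the fourth-power relations collapse to $\alpha\beta=-\tfrac16$ and $t=\tfrac{1}{81}$, which is a more transparent explanation of where the excluded value comes from. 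Your derivation of ``exactly $8$ simple inflection lines'' from the length-$24$ budget of Lemma~\ref{lem:defU} together with the multiplicities of Lemma~\ref{lem:singU} is also cleaner than exhibiting the $8$ residual points on a conic, since it sidesteps the question of whether distinct simple inflection points could share a line. The harmonic quartic computation via Table~\ref{tab:contih} and the smoothness of $V_t$ via the Jacobian criterion agree with the paper (the paper leaves both as ``easy computations'').

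One caveat, which is a defect of the statement itself and of the paper's proof rather than of your method: if you actually carry out the Jacobian-criterion check on $H(V_{-1})=-3u^4+3v^4-2w^4+10u^2v^2+8uvw^2$, the locus $w=0$ gives the system $u(5v^2-3u^2)=v(3v^2+5u^2)=0$, whose nontrivial solutions require $34\,u^2v^2=0$. Hence in characteristic $17$ the curve $H(V_{-1})$ is \emph{singular} (for instance at $[u,v,w]$ with $u^2=13v^2$, $w=0$), and the last sentence of the lemma fails there. This does not damage the application in Theorem~\ref{thm:rf}: over an algebraically closed field smoothness of $H(V_t)$ is an open nonempty condition on $t$ (your computation shows the discriminant is not identically zero), so some member of the pencil still works; but ``more precisely, $H(V_{-1})$ is smooth'' should carry the extra exclusion of characteristic $17$, and your proposed verification, done honestly, is exactly what detects this.
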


\begin{proof}
It is easy to check that $V_t$ is smooth for $t$ different from $0,1$ and that the hyperinflection points of the curve $V_t$ are the intersection of $V_t$ with the two lines $x=0$ and $y=0$.  The remaining inflection points are the points of intersection of the curve $V_t$ with the conic $2xy+3z^2 = 0$.  These remaining intersection points satisfy the equations
\[
2xy+3z^2 = 0
\quad \quad \quad
{\textrm{and}}
\quad \quad \quad
t x^4 + \frac{2}{9} x^2y^2 + y^4 = 0 .
\]
The discriminant of the polynomial $t x^2 + \frac{2}{9} x y + y^2$ is $4(\frac{1}{81}-t)$.  Thus, for $t$ different from $\frac{1}{81}$ and $0$, there are $8$ distinct inflection points on $V_t$, besides the $8$ hyperinflection points already determined.  We deduce that for the values of $t$ not equal to $0,1,\frac{1}{81}$, the curve $V_t$ is smooth and has exactly $8$ hyperinflection lines and $8$ hyperinflection points.

Evaluating the harmonic is an easy computation as is checking that $H(V_{-1})$ is smooth for all fields of characteristic coprime with~$6$.
\end{proof}

The curves in the following remark appear in the main result of~\cite{PT1}.

\begin{Rem} \label{rem:hilsym}
Let $k$ be an algebraically closed field of characteristic~$13$ and let $V_{-1}$ and $V'_{-1}$ be the smooth plane quartics with equations
\[
\begin{array}{llrcl}
V_{-1} & \colon & \quad \quad
- x^4 + y^4 - z^4 - 2 x^2y^2 -4 xyz^2 & = & 0 , \\[5pt]
V'_{-1} & \colon & \quad \quad
x^4 - y^4 - z^4 - 2 x^2y^2 -4 xyz^2 & = & 0 .
\end{array}
\]
These curves are equivalent under the exchange of the coordinates $x$ and $y$.  The configurations of inflection lines associated to these two curves are different points in $\Hilb$: they have distinct and smooth harmonic quartic curves.  Nevertheless, these two configurations have the same image under the Chow morphism $\Hilb \to \Sym$ (see~\cite{PT1}*{Theorem on p.~2}).  In fact, denote by $\ell_1 , \ldots , \ell_8$ the hyperinflection lines of $V_{-1}$ and by $\ell_9 , \ldots , \ell_{16}$ the simple inflection lines of $V_{-1}$.  The lines $\ell_1 , \ldots , \ell_8$ are also the hyperinflection lines of $V'_{-1}$ and the lines $\ell_9 , \ldots , \ell_{16}$ are also the simple inflection lines of $V'_{-1}$.  The configuration $\fl(V_{-1})$ consists of~$8$ non-reduced points corresponding to the lines $\ell_1 , \ldots , \ell_8$ and~$8$ reduced points corresponding to $\ell_9 , \ldots , \ell_{16}$.  The same is true for $\fl(V'_{-1})$, but the non-reduced structure at each of the~$8$ lines $\ell_1 , \ldots , \ell_8$ is different (we can check this using Lemma~\ref{lem:singU}).  As points in $\Hilb$, the configurations $\fl(V_{-1})$ and $\fl(V'_{-1})$ are different.  As zero-cycles, they are both represented by the same cycle $2(\ell_1 + \cdots + \ell_8) + (\ell_9 + \cdots + \ell_{16})$.
\end{Rem}

For every line $\ell\subset \bp$, we define the loci
\begin{eqnarray*}
\Vinf & := & \{C\in \Pq :  \ell\cdot C=3p+q, \text{ for some } p,q\in C\} , \\[5pt]
\VH & := & \{C\in \Pq :  [\ell] \in H(C) \subset \bpd \} .
\end{eqnarray*}
The locus $\Vinf$ is the codimension $2$ locus of quartics in $\Pq$ having the line $\ell$ as an inflection line.  The locus $\VH$ is the codimension $1$ locus of quartics in $\Pq$ whose harmonic quartic contains the point corresponding to the line $\ell$; equivalently, $\VH$ is the locus of quartics in $\Pq$ whose restriction to the line $\ell$ is a binary quartic form with vanishing invariant $S$.  Clearly, there is an inclusion $\Vinf \subset \VH$, since the restriction of the equation of a quartic curve to an inflection line is a binary form with a triple root and hence vanishing invariant $S$.  Denote by $C$ a smooth plane quartic having $\ell$ as an inflection line and let $T_C \Vinf$, $T_C \VH$ and $T_C \Pq$ denote the Zariski tangent spaces to $\Vinf$, $\VH$ and $\Pq$, respectively, at the point corresponding to $C$.  If the line $\ell$ is a simple inflection line, then $\Vinf$ is smooth at $C$ (Lemma~\ref{lem:tang}) and hence $T_C \VH$ has codimension~$2$ in $T_C \Pq$.  Suppose that $\ell$ is a hyperinflection line for $C$.  The locus $\Vinf$ may be singular at the point corresponding to $C$ (it is easy to see that this is indeed the case).  Nevertheless, the inclusions $\Vinf \subset \VH \subset \Pq$ induce inclusions at the level of Zariski tangent spaces $T_C \Vinf \subset T_C \VH \subset T_C \Pq$.  Hence, $T_C \Vinf$ is contained in $T_C \VH$ and $\VH$ is smooth at $C$ (Lemma~\ref{lem:tang}).  We deduce that $T_C\Vinf$ has codimension at least~$1$ in $T_C \Pq$.  In this way, even at some singular points of $\Vinf$, we obtain a linear condition imposed by $\ell$ on $T_C \Vinf$.  The following lemma gives a cohomological interpretation of these tangent spaces.

\begin{Lem} \label{lem:tang}
Let $k$ be a field of characteristic different from $3$.  Let $C$ be a smooth plane quartic, let $\ell$ be an inflection line of $C$ and write $\ell\cdot C = 3p+q$, for $p,q\in C$.  The following tangent space computations hold:
\[
\begin{array}{rcll}
T_{C}\Vinf & \simeq & \HH(C,\sO_C(4)\otimes \sO_C(-2p)) , & \textrm{if } p \neq q; \\[5pt]
T_{C}\VH & \simeq & \HH(C,\sO_C(4)\otimes \sO_C(-p)) , & {\textrm{if $\car (k) \neq 2$ and }} p = q.
\end{array}
\]
\end{Lem}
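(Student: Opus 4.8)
The plan is to realize each tangent space as a subspace of $\HH(C,\sO_C(4))$ and then to read off the defining linear conditions as vanishing conditions at the point $p$. First I would fix the ambient identification. Let $Q$ be a quartic form defining $C$, so that $\Pq=\mathbb{P}\bigl(\HH(\bp,\sO(4))\bigr)$ and $T_C\Pq$ is the quotient $\HH(\bp,\sO(4))/\langle Q\rangle$, a first-order deformation being $Q+\varepsilon r$ with $r$ well defined modulo $Q$. Restricting the ideal sheaf sequence of $C$ and using that $\mathscr{I}_C(4)\cong\sO_{\bp}$ has no higher cohomology, the restriction map $r\mapsto r|_C$ induces an isomorphism $T_C\Pq\simeq\HH(C,\sO_C(4))$ (both spaces have dimension $14$ by Riemann--Roch on the genus-$3$ curve $C$). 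All three tangent spaces will be computed through this isomorphism.

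Next I would turn the two loci into explicit equations. By \eqref{e:eval}, near $C$ the hypersurface $\VH$ is cut out by $S(Q|_\ell)$, while $\Vinf$ is cut out by the pair $S(Q|_\ell),T(Q|_\ell)$; since $\ell$ is an inflection line for $C$, both invariants vanish at $Q|_\ell$, so the Zariski tangent spaces are the kernels of the corresponding differentials. The polarization identity for $S$ gives that the differential of $S(Q|_\ell)$ in the direction $r$ is $\bilq{Q|_\ell}{r|_\ell}$, and the differential of $T(Q|_\ell)$ I would compute directly from \eqref{e:inva}. Working in coordinates with $\ell\colon x=0$ and $p=[0,0,1]$, as in Lemma~\ref{lem:singU}, one has $Q|_\ell=\alpha y^4+\beta y^3z$, with $\beta\neq 0$ exactly when $p\neq q$ and with $\beta=0,\ \alpha\neq 0$ when $p=q$. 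Writing $r|_\ell=\sum g_i\,y^{4-i}z^i$, short computations yield $\bilq{Q|_\ell}{r|_\ell}=12\alpha g_4-3\beta g_3$ and $dT(r)=-27\beta^2 g_4$, where $g_4=r(p)$ and $g_3$ are the value and the first Taylor coefficient of $r|_\ell$ at $p$.

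From these formulas the conditions fall out. When $p\neq q$, the vanishing of $dT$ forces $g_4=0$ (using $\car k\neq 3$ and $\beta\neq 0$), and then the vanishing of the $S$-differential forces $g_3=0$; thus $T_C\Vinf$ consists of those $r$ with $r|_\ell$ vanishing to order at least $2$ at $p$. When $p=q$ and $\car k\neq 2$, the single condition coming from $S$ reads $12\alpha g_4=0$, i.e.\ $r(p)=0$; the scalar $12$ is precisely where $\car k\neq 2$ (together with $\car k\neq 3$) is needed.

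The last step transfers these conditions from $\ell$ to $C$, and I expect it to be the main obstacle. The conditions are phrased along $\ell$, whereas the target is a subsheaf of $\sO_C(4)$; the key point is that, because $\ell$ meets $C$ at $p$ with multiplicity $3$, the restrictions $r|_\ell$ and $r|_C$ share the same Taylor expansion at $p$ up to order $2$. Concretely, taking $t=y|_C$ as a local parameter on $C$ one has $x|_C=O(t^3)$, whence $r|_C(t)=r(0,t,1)+O(t^3)=r|_\ell(t)+O(t^3)$. Therefore $r(p)=0$ is equivalent to $r|_C$ vanishing at $p$, and $g_3=g_4=0$ is equivalent to $r|_C$ vanishing to order at least $2$ at $p$. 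Combining this with the isomorphism of the first step yields $T_C\VH\simeq\HH(C,\sO_C(4)\otimes\sO_C(-p))$ when $p=q$ and $\car k\neq 2$, and $T_C\Vinf\simeq\HH(C,\sO_C(4)\otimes\sO_C(-2p))$ when $p\neq q$, as claimed. The delicate point is exactly this order-$2$ matching: it is what allows a condition on the binary form $Q|_\ell$ to be read as a geometric condition on $C$, and it is where the inflectionary contact of $\ell$ with $C$ genuinely enters.
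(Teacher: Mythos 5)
Your proof is correct and follows essentially the same route as the paper: identify $T_C\Pq$ with $\HH(C,\sO_C(4))$ via restriction, linearize the invariants along $\ell$ using the polarization of $S$, and translate the resulting jet conditions at $p$ from $\ell$ to $C$ via the order-$3$ contact (your computation of the $\VH$ case reproduces the paper's $12\gamma=0$ argument almost verbatim). The only difference is that for the $T_C\Vinf$ case the paper simply cites Lemma~3.1 of~\cite{PT1}, whereas you carry out the differential of $T$ explicitly; your computation $dT(r)=-27\beta^2 g_4$ at $\alpha y^4+\beta y^3z$ is correct and makes that half self-contained.
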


\begin{proof}
For the computation of the tangent space to $\Vinf$ see~\cite{PT1}*{Lemma~3.1} (although the stated reference assumes characteristic $0$, the proof of the statement only requires characteristic different from~$3$).  We still need to compute the tangent space to $\VH$ at $C$ when $\ell$ is a hyperinflection line of $C$.  Choose coordinates on $\bp$ so that $\ell$ is the line $x=0$ and the point $p$ is the point $[0,0,1]$.  It follows that a polynomial $q_C$ defining the quartic $C$ is $q_C = y^4 + x c(x,y,z)$, where $c$ is a ternary cubic form.  To each ternary quartic form $q \in \HH (\bp , \sO_{\bp} (4))$, we associate a first order deformation of the quartic curve $C$ by $q_C + \varepsilon q$, where $\varepsilon^2 = 0$.  In this way, we identify the tangent space to $\Pq$ at the quartic $C$ with $\HH(C,\sO_C(4))$.  The first order deformations tangent to $\VH$ therefore are the ones for which the invariant $S((q_C + \varepsilon q)|_\ell)$ vanishes.  This condition translates to
\[
S(y^4 + \varepsilon q(0,y,z)) = \varepsilon \bilq{y^4}{q(0,y,z)} = \varepsilon \bilq{y^4}{q(0,0,z)} = 0 .
\]
Letting $\gamma$ be the coefficient of $z^4$ in $q$, this condition translates to the equation $12 \gamma = 0$.  We deduce that $\gamma $ vanishes and hence $q$ vanishes at the hyperinflection point $[0,0,1]$, as needed.
\end{proof}

Let $\usm \subset \Pq$ denote the open subset consisting of smooth plane quartics $C$ with at most $10$ hyperinflection lines. By Lemma \ref{lem:defU}, the rational map $\F$ is defined at $\usm$.

\begin{Lem}\label{lem:et}
The morphism $\F|_{\usm} \colon \usm \to \Hilb$ is unramified.
\end{Lem}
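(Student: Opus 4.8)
The plan is to show that the differential of $\F|_{\usm}$ is injective at every point $C \in \usm$, since a morphism between varieties over a field is unramified precisely when its fibers are unramified, equivalently when the induced maps on Zariski tangent spaces are injective (plus separability, which is automatic here as we work with a map to a fine moduli space and can track tangent vectors explicitly). So I would fix a smooth quartic $C$ with at most $10$ hyperinflection lines and analyze the kernel of the map $T_C \Pq \to T_{\fl(C)} \Hilb$ induced by $\F$. A tangent vector to $\Pq$ at $C$ is a first-order deformation $q_C + \varepsilon q$ of the defining quartic form, which I identify with $q \in \HH(C, \sO_C(4))$ following the convention in Lemma~\ref{lem:tang}.

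**Translating into a linear-algebra condition on the inflection lines.** The key mechanism is that a first-order deformation lies in the kernel of $d\F$ exactly when it preserves the inflection scheme to first order, i.e. when it is tangent to $\Vinf$ for \emph{every} inflection line $\ell$ of $C$ simultaneously. Here the distinction between the Hilbert scheme and the symmetric product becomes essential, and it is exactly the point emphasized in the discussion of $\Vinf \subset \VH$ preceding Lemma~\ref{lem:tang}: each simple inflection line contributes, via the smoothness of $\Vinf$ at $C$, a codimension-$2$ condition on the tangent space, namely membership in $\HH(C, \sO_C(4) \otimes \sO_C(-2p))$ where $\ell \cdot C = 3p + q$; each hyperinflection line contributes, via the smoothness of $\VH$ and the inclusion $T_C \Vinf \subset T_C \VH$, at least a codimension-$1$ condition, namely $q(p) = 0$, i.e. membership in $\HH(C, \sO_C(4) \otimes \sO_C(-p))$. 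Thus a kernel vector $q$ is a section of $\sO_C(4)$ vanishing doubly at each simple inflection point and (at least) simply at each hyperinflection point.

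**Counting to force vanishing.** The heart of the argument is then a Riemann-Roch / degree count. The curve $C$ has genus $3$ and $\sO_C(4)$ has degree $16$. By the Pl\"ucker count there are $24$ inflection lines with multiplicity; writing $h$ for the number of hyperinflection lines and using Corollary~\ref{cor:fermat} (so that $\fl(C)$ is genuinely $0$-dimensional of length $24$ and $C$ is not the characteristic-$3$ Fermat curve), the simple inflection lines number $24 - 2h$ and each imposes a double vanishing, while the $h$ hyperinflection points each impose a single vanishing. The total imposed degree of vanishing is therefore at least $2(24 - 2h) + h = 48 - 3h$. A nonzero global section of $\sO_C(4)$, being the restriction of a quartic form, lives in a space of dimension $15$ on $C$ and cannot vanish to total degree exceeding $16$ unless it is identically zero on $C$ (a quartic form restricting to the zero section of $\sO_C(4)$ is a multiple of $q_C$, i.e. the trivial deformation). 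I would show $48 - 3h > 16$, i.e. $h < \tfrac{32}{3}$, which holds precisely when $h \le 10$ — and this is exactly why the hypothesis $C \in \usm$ restricts to at most $10$ hyperinflection lines. Hence the only kernel vector is the trivial one, so $d\F$ is injective at $C$.

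**The main obstacle.** The delicate point is the bookkeeping that turns the geometric conditions from Lemma~\ref{lem:singU} and Lemma~\ref{lem:tang} into a clean divisorial vanishing statement on $C$, and in particular justifying that a nonzero section of $\sO_C(4)$ vanishing at a divisor of degree exceeding $16$ must vanish identically: one must be careful that the $2(24-2h) + h$ vanishing conditions correspond to an honest effective divisor of that degree supported at distinct points (the simple and hyperinflection points being distinct and reduced for $C \in \usm$, which follows from Lemma~\ref{lem:singU} and the finiteness of $\fl(C)$). Once this divisorial reformulation is in place the numerical inequality $48 - 3h > 16 \iff h \le 10$ closes the argument, and I expect the translation between the tangent-space language and the divisor language to be the step requiring the most care, rather than the final count itself.
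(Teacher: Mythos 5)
Your proposal is correct and follows essentially the same route as the paper: both bound the tangent space to the fiber of $\F|_{\usm}$ by the intersection of the tangent spaces to the loci $\Vinf$ (weakened to $\VH$ at hyperinflection lines), invoke Lemma~\ref{lem:tang} to identify the result with $\HH\bigl(C,\sO_C(4L-\Delta_C)\bigr)$ for the divisor $\Delta_C$ of degree $2f+h=48-3h$, and conclude by the degree count $48-3h>16$ for $h\le 10$. The only difference is cosmetic bookkeeping (the paper works directly with the divisor $\Delta_C$ rather than with separate vanishing conditions), and your worry about the inflection points being distinct is settled by noting that a smooth point of $C$ has a unique tangent line.
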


\begin{proof}
Let $C \in \usm$ be a smooth plane quartic with at most $10$ hyperinflection lines and let $F_C = (\F|_{\usm})^{-1} \F(C)$ be the fiber of $\F|_{\usm}$ containing $C$.  Let $p_1 , \ldots , p_{f+h}$ denote the inflection points of $C$ and let $\ell_1 , \ldots , \ell_{f+h}$ denote the corresponding inflection lines.  We label the points in such a way that for $i \in \{1, \ldots , f+h\}$ we have $\ell_i\cdot C = 4p_i$ if and only if $i \geq f+1$.  Let $\Delta_C$ denote the divisor $\Delta_C = \sum_{i=1}^f 2 p_i + \sum_{j=1}^h p_{f+j}$ on $C$.  By the discussion before Lemma~\ref{lem:tang}, the sequence of inclusions
\[
F_C
\quad \subset \quad
\Vinfl{1} \cap \cdots \cap \Vinfl{f+h}
\quad \subset \quad
\left( \mathop{\bigcap}_{i = 1}^{f} \Vinfl{i} \right)
\cap
\left( \mathop{\bigcap}_{j = 1}^{h} \VHl{f+j} \right)
\]
holds.  Therefore, by Lemma~\ref{lem:tang}, the tangent space to $F_C$ at $C$ is contained in the vector space $\HH \bigl( C,\sO_C( 4L - \Delta_C \bigr)$, where $L$ is the class on $C$ of a line in $\bp$.  By the assumption that $C$ has at most $10$ hyperinflection lines, the degree of the divisor $4L-\Delta_C$ is $16-(2f+h) \leq -2$ and hence the tangent space to the fiber $F_C$ at $C$ is zero.
\end{proof}

\begin{Thm} \label{thm:rf}
Let $k$ be a field of characteristic coprime with~$6$.  The morphism $\wtF$ is birational onto its image.
\end{Thm}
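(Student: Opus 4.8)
The plan is to deduce the birationality of $\wtF$ from the structure of a single, well-chosen fibre. Since $\wtP$ is irreducible (it is the closure of the graph of $\F$) and $\wtF$ is proper (a projection between projective schemes), it suffices to produce one closed point $\fl_0$ in the image of $\wtF$ whose scheme-theoretic fibre $\wtF^{-1}(\fl_0)$ consists of a single reduced point. Indeed, over the open locus of the image where $\wtF$ has finite fibres the sheaf $\wtF_* \sO$ is coherent and its fibre dimension is upper semicontinuous; a fibre of length $1$ at $\fl_0$ therefore forces the generic fibre to have length $1$ as well, so that $\wtF$ is generically injective and hence birational onto its image. I would take $V$ to be a general member $V_t$ of the pencil of Lemma~\ref{lem:Vt}: for general $t$ the curve $V$ is smooth, has exactly $8$ hyperinflection lines (so $V \in \usm$ and $V$ has at least $5$ hyperinflection lines), and its harmonic quartic $H(V)$ is smooth; such $t$ exists since $k$ is algebraically closed, hence infinite. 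I then set $\fl_0 := \fl(V)$.

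The first task is to check that $\wtF^{-1}(\fl_0)$ is set-theoretically the single point $(V,\fl_0)$. Every point of this fibre is a pair $(D,\fl_0)\in \wtP$, so by Definition~\ref{def:conf} the configuration $\fl_0$ is associated to $D$; I claim $D$ must be smooth. If $D$ were singular without a line of multiplicity at least $3$, then Proposition~\ref{prop:liscsing} would assert that no configuration associated to $D$ can equal the configuration of inflection lines of a smooth quartic, contradicting $\fl_0 = \fl(V)$. If instead $D$ contained a line $\ell$ of multiplicity at least $3$, then Proposition~\ref{prop:re3} would force every quartic derived for $(D,\fl_0)$ to be singular at the point corresponding to $\ell$; but the linear system of quartics through $\fl_0$ contains the integral quartic $H(V)$, so by Lemma~\ref{lem:syslin} it reduces to $H(V)$ alone, and the unique derived quartic $H(V)$ is smooth --- a contradiction. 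Hence $D$ is smooth. By Lemma~\ref{lem:defU} the map $\F$ is defined at $D$, so the configuration $\fl_0$ associated to $D$ must be $\F(D) = \fl(D)$. As $V$ has at least $5$ hyperinflection lines, Corollary~\ref{cor:rico} applied to the pair $V,D$ yields $D = V$.

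It remains to upgrade this set-theoretic statement to the scheme-theoretic one. Because $V \in \usm$ and $\usm$ is open in $\Pq$, the graph of $\F$ over $\usm$ is an open neighbourhood of $(V,\fl_0)$ in $\wtP$ on which $\wtF$ agrees (via the first projection) with $\F|_{\usm}$, and the latter is unramified by Lemma~\ref{lem:et}. An unramified morphism has reduced fibres, so $\wtF^{-1}(\fl_0)$ is reduced at $(V,\fl_0)$; since its support is the single point $(V,\fl_0)$, the scheme-theoretic fibre is exactly one reduced point. The semicontinuity argument of the first paragraph then finishes the proof.

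I expect the crux to be the passage from ``$\fl_0$ is associated to $D$'' to ``$D$ is smooth''. This is where one must simultaneously invoke the numerical obstructions of Section~\ref{sec:trop} for quartics without a triple line (Proposition~\ref{prop:liscsing}) and the finer global analysis for quartics containing a triple line (Proposition~\ref{prop:re3}), the latter relying on the smoothness of $H(V)$ to pin down the derived quartic through Lemma~\ref{lem:syslin}. The underlying subtlety is that $\fl_0$ is a highly non-reduced configuration, its eight hyperinflection lines being double points; reconstructibility must be established through the indirect comparison above, while the reducedness of the fibre is recovered only from the unramifiedness in Lemma~\ref{lem:et}, which is exactly the balance between non-reducedness and smoothness emphasised in the introduction.
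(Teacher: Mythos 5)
Your proposal is correct and follows essentially the same route as the paper's own proof: choose a smooth quartic in the pencil $V_t$ with smooth harmonic quartic and between $5$ and $10$ hyperinflection lines, rule out singular $D$ via Proposition~\ref{prop:liscsing} together with Proposition~\ref{prop:re3} and Lemma~\ref{lem:syslin}, identify $D$ with $V$ via Corollary~\ref{cor:rico}, and get reducedness of the fibre from the unramifiedness in Lemma~\ref{lem:et}. The only differences are cosmetic: you take a general $V_t$ where the paper fixes $V_{-1}$, and you spell out the semicontinuity argument that the paper leaves implicit in the sentence ``the result follows if we show\dots''.
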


\begin{proof}
Let $C$ be a plane quartic with the following properties:
\begin{itemize}
\item
$C$ is smooth;
\item
$H(C)$ is smooth;
\item
$C$ has at least $5$ and at most $10$ hyperinflection lines.
\end{itemize}
Such quartics exist: the quartic $V_{-1}$ in the family of Lemma~\ref{lem:Vt} is an example.

The result follows if we show that the morphism $\F$ is unramified at the curve $C$ and that $C$ is reconstructible from $\fl(C)$.  Since $C$ is smooth and has at most $10$ hyperinflection lines, Lemma~\ref{lem:et} allows us to conclude that $\F$ is unramified at $C$.

To finish the proof, we show that $C$ is reconstructible from $\fl(C)$.  Let $D$ be a plane quartic and suppose that $\fl(C)$ is a configuration of inflection lines associated to $D$.  Equivalently, this means that the pair $(D,\fl(C))$ is in $\wtP$. Assume first that $D$ is singular.  Proposition~\ref{prop:liscsing} shows that $D$ must contain a line with multiplicity at least~$3$.  Proposition~\ref{prop:re3} then shows that every {\nom} quartic containing $\fl(C)$ must be singular, contradicting the assumption that $H(C)$ is non-singular.  Thus, $D$ must be smooth.  Since $C$ has at least $5$ hyperinflection lines, Corollary~\ref{cor:rico} shows that $C$ and $D$ coincide.  Hence, $C$ is reconstructible from $\fl(C)$ and we are done.
\end{proof}

\begin{bibdiv}
\begin{biblist}

\bib{magma}{article}{
    AUTHOR = {Bosma, Wieb},
    AUTHOR = {Cannon, John},
    AUTHOR = {Playoust, Catherine},
     TITLE = {The {M}agma algebra system. {I}. {T}he user language},
      NOTE = {Computational algebra and number theory (London, 1993)},
   JOURNAL = {J. Symbolic Comput.},
    VOLUME = {24},
      YEAR = {1997},
    NUMBER = {3-4},
     PAGES = {235--265}
}

\bib{CS1}{article}{
   author={Caporaso, Lucia},
   author={Sernesi, Edoardo},
   title={Recovering plane curves from their bitangents},
   journal={J. Algebraic Geom.},
   volume={12},
   date={2003},
   number={2},
   pages={225--244}
}

\bib{CS2}{article}{
   author={Caporaso, Lucia},
   author={Sernesi, Edoardo},
   title={Characterizing curves by their odd theta-characteristics},
   journal={J. Reine Angew. Math.},
   volume={562},
   date={2003},
   pages={101--135}
}

\bib{fu}{article}{
   author={Fukasawa, Satoru},
   title={Galois points on quartic curves in characteristic 3},
   journal={Nihonkai Math. J.},
   volume={17},
   date={2006},
   number={2},
   pages={103--110}
}

\bib{GK}{article}{
   author={Girard, Martine},
   author={Kohel, David R.},
   title={Classification of genus 3 curves in special strata of the moduli
   space},
   conference={
      title={Algorithmic number theory},
   },
   book={
      series={Lecture Notes in Comput. Sci.},
      volume={4076},
      publisher={Springer, Berlin},
   },
   date={2006},
   pages={346--360}
}

\bib{Har}{book}{
   author={Hartshorne, Robin},
   title={Algebraic geometry},
   note={Graduate Texts in Mathematics, No. 52},
   publisher={Springer-Verlag, New York-Heidelberg},
   date={1977},
   pages={xvi+496}
}

\bib{kaji}{article}{
   author={Kaji, Hajime},
   title={On the Gauss maps of space curves in characteristic $p$},
   journal={Compositio Math.},
   volume={70},
   date={1989},
   number={2},
   pages={177--197}
}

\bib{L}{article}{
   author={Lehavi, David},
   title={Any smooth plane quartic can be reconstructed from its bitangents},
   journal={Israel J. Math.},
   volume={146},
   date={2005},
   pages={371--379}
}

\bib{N}{book}{
   author={Newstead, P. E.},
   title={Introduction to moduli problems and orbit spaces},
   series={Tata Institute of Fundamental Research Lectures on Mathematics
   and Physics},
   volume={51},
   publisher={Tata Institute of Fundamental Research, Bombay; by the Narosa
   Publishing House, New Delhi},
   date={1978},
   pages={vi+183}
}

\bib{PT1}{article}{
   author={Pacini, Marco},
   author={Testa, Damiano},
   title={Recovering plane curves of low degree from their inflection lines
   and inflection points},
   journal={Israel J. Math.},
   volume={195},
   date={2013},
   number={1},
   pages={283--316}
}

\bib{PT2}{article}{
   author={Pacini, Marco},
   author={Testa, Damiano},
   title={Plane quartics with at least 8 hyperinflection points},
   journal={Bull. Braz. Math. Soc. (N.S.)},
   volume={45},
   date={2014},
   number={4},
   pages={819--836}
}

\bib{pa}{article}{
   author={Pardini, Rita},
   title={Some remarks on plane curves over fields of finite characteristic},
   journal={Compositio Math.},
   volume={60},
   date={1986},
   number={1},
   pages={3--17}
}

\bib{sal}{book}{
   author={Salmon, George},
   title={A treatise on the higher plane curves: intended as a sequel to ``A
   treatise on conic sections''},
   series={2nd ed},
   publisher={Hodges Foster and Co., Grafton Street},
   date={1873},
   pages={ix+379}
}

\bib{sp}{misc}{
  author       = {Stacks Project Authors},
  title        = {\href{http://stacks.math.columbia.edu}{Stacks Project}},
  year         = {2017}
}

\bib{V}{book}{
   author={Vermeulen, Alexius Maria},
   title={Weierstrass points of weight two on curves of genus three},
   note={Dissertation, University of Amsterdam, Amsterdam, 1983;
   with a Dutch summary},
   publisher={Universiteit van Amsterdam, Amsterdam},
   date={1983},
   pages={iii+183}
}

\end{biblist}
\end{bibdiv}

\end{document}